 \newtheorem{theorem}{Theorem}[section]
 \newtheorem{cor}[theorem]{Corollary}
 \newtheorem{lemma}[theorem]{Lemma}
 \newtheorem{proposition}[theorem]{Proposition}
    \newtheorem{ex}[theorem]{Example}
 \theoremstyle{definition}
 \newtheorem{definition}[theorem]{Definition}
 \theoremstyle{remark}
 \newtheorem{remark}[theorem]{Remark}
 \numberwithin{equation}{section}
\DeclareMathOperator{\di}{\mathcal{D}}
\DeclareMathOperator{\dis}{\mathfrak{D}}
\newcommand{\rdis}{\mathfrak{D}^{{\rm rev}}}
\newcommand{\rdi}{\mathcal{D}^{{\rm rev}}}
\DeclareMathOperator{\fs}{\mathfrak{F}}
\DeclareMathOperator{\yqk}{\mathfrak{Y}}
\DeclareMathOperator{\qkey}{\mathfrak{Q}}
\DeclareMathOperator{\yqs}{\mathscr{S}}
\DeclareMathOperator{\qs}{\mathcal{S}}
\newcommand{\yfs}{\mathfrak{F}^{Y}}
\DeclareMathOperator{\des}{DES}
\DeclareMathOperator{\wdes}{{\mathrm wdes}}
\DeclareMathOperator{\std}{{\mathrm std}}
\DeclareMathOperator{\rev}{{\mathrm rev}}
\DeclareMathOperator{\QSym}{{QSym}}
\DeclareMathOperator{\NSym}{{NSym}}
\DeclareMathOperator{\Sym}{{Sym}}
\DeclareMathOperator{\im}{\mathfrak{S}}
\newcommand{\SSRIF}{\mathrm{SSRIF}}
\newcommand{\SRIF}{\mathrm{SRIF}}
\newcommand{\SSIF}{\mathrm{SSIF}}
\newcommand{\SIF}{\mathrm{SIF}}
\newcommand{\RSSF}{\mathrm{RSSF}}
\newcommand{\RSF}{\mathrm{RSF}}
\newcommand{\YSSF}{\mathrm{YSSF}}
\newcommand{\YSF}{\mathrm{YSF}}
\newcommand{\FSSF}{\mathrm{FSSF}}
\newcommand{\YCT}{\mathrm{YCT}}
\newcommand{\SYCT}{\mathrm{SYCT}}
\newcommand{\SSIT}{\mathrm{SSIT}}
\newcommand{\SIT}{\mathrm{SIT}}
\newcommand{\SSRIT}{\mathrm{SSRIT}}
\newcommand{\SRIT}{\mathrm{SRIT}}
\newcommand{\wt}{{\rm wt}}
\newcommand{\flatten}{{\rm flat}}
\newcommand{\excise}[1]{}
\newcommand\tableau[1]{
\vcenter{
\let\\=\cr
\baselineskip=-16000pt
\lineskiplimit=16000pt
\lineskip=0pt
\halign{&\tableaucell{##}\cr#1\crcr}}}
\newcommand{\tableaucell}[1]{{%
\def \arg{#1}\def \void{}%
\ifx \void \arg
\vbox to \cellsize{\vfil \hrule width \cellsize height 0pt}%
\else
\unitlength=\cellsize
\begin{picture}(1,1)
\put(0,.22){\makebox(1,1)[b]{$#1$}}
\put(0,0){\line(1,0){1}}
\put(0,1){\line(1,0){1}}
\put(0,0){\line(0,1){1}}
\put(1,0){\line(0,1){1}}
\end{picture}%
\fi}}
\newlength\cellsize \setlength\cellsize{12\unitlength}
\newcommand\cellify[1]{\def\thearg{#1}\def\nothing{}%
\ifx\thearg\nothing\vrule width0pt height\cellsize depth0pt%
  \else\hbox to 0pt{\usebox2\hss}\fi%
  \vbox to 12\unitlength{\vss\hbox to 12\unitlength{\hss$#1$\hss}\vss}}
\newcommand\vtableau[1]{\vtop{\let\\=\cr
\setlength\baselineskip{-12000pt}
\setlength\lineskiplimit{12000pt}
\setlength\lineskip{0pt}
\halign{&\cellify{##}\cr#1\crcr}}}
\begin{document}

\date{April 6, 2020}

\title{Lifting the dual immaculate functions}

\author[S. Mason]{Sarah Mason}
\address{Department of Mathematics, 127 Manchester Hall, Wake Forest University, Winston-Salem, NC 27109, U.S.A.}
\email{masonsk@wfu.edu}

\author[D. Searles]{Dominic Searles}
\address{Department of Mathematics and Statistics, University of Otago, 730 Cumberland St., Dunedin 9016, New Zealand}
\email{dominic.searles@otago.ac.nz}

\subjclass[2010]{Primary 05E05}

\maketitle
\maketitle

\begin{abstract} 
We introduce two lifts of the dual immaculate quasisymmetric functions to the polynomial ring. We establish positive formulas for expansions of these dual immaculate slide polynomials into the fundamental slide and quasi-key bases for polynomials.  These formulas mirror connections between dual immaculate quasisymmetric functions, fundamental quasisymmetric functions, and Young quasisymmetric Schur functions, extending these connections from the ring of quasisymmetric functions to the full polynomial ring.  
We also consider a reverse variant of the dual immaculate quasisymmetric functions, mirroring the dichotomy between the quasisymmetric Schur functions and the Young quasisymmetric Schur functions. We show this variant is obtained by taking stable limits of one of our lifts, and utilize these reverse dual immaculate quasisymmetric functions to establish a connection between the dual immaculate quasisymmetric functions and the Demazure atom basis for polynomials.
\end{abstract}


\section{Introduction}

The \emph{Schur functions} form a celebrated and much-studied basis for the Hopf algebra $\Sym$ of symmetric functions with key applications to many different areas of mathematics, including Schubert calculus of Grassmannian varieties and representation theory of both the symmetric and general linear groups.  The Hopf algebra $\QSym$ of \emph{quasisymmetric functions} \cite{Ges84} is a generalization of $\Sym$, containing $\Sym$ as a subalgebra.  Bases for $\QSym$ are indexed by compositions $\alpha$ (finite sequences of positive integers), whereas bases for $\Sym$ are indexed by partitions (weakly decreasing finite sequences of positive integers).  A classical basis for $\QSym$ is the \emph{fundamental quasisymmetric functions} $\{F_\alpha\}$, introduced in~\cite{Ges84} as generating functions for $P$-partitions.  In comparison to the role Schur functions play as Frobenius characters of the irreducible representations of the symmetric group, the fundamental quasisymmetric functions are quasisymmetric characteristics \cite{DKLT} of the irreducible representations of the $0$-Hecke algebra. There has been significant recent interest in finding other bases for $\QSym$ that also extend properties of the Schur basis. Two central examples of such \emph{Schur-like} bases of $\QSym$ are the \emph{quasisymmetric Schur} basis $\{\qs_\alpha\}$ introduced in \cite{HLMvW11a} and the \emph{dual immaculate} basis $\{\di_\alpha\}$ introduced in \cite{BBSSZ14}.  

A precise sense in which bases for $\QSym$ are Schur-like comes from considering the Hopf algebra $\NSym$ of \emph{noncommutative symmetric functions}~\cite{GKLLRT95, Haz03, Haz05}: a noncommutative analogue of $\Sym$, which is dual to $\QSym$.  A basis $\{B_\alpha\}$ for $\NSym$ is termed \emph{Schur-like} if the image of each basis element $B_\alpha$ under the natural projection from $\NSym$ to $\Sym$ is the Schur function $s_\alpha$, whenever $\alpha$ is a partition. The dual in $\NSym$ of the quasisymmetric Schur functions is the \emph{nonsymmetric Schur functions} \cite{BLvW}, and the dual of the dual immaculate functions (from which the dual immaculate quasisymmetric functions originated) is the  \emph{immaculate functions} \cite{BBSSZ14}, both of which are Schur-like bases of $\NSym$. These bases for $\QSym$ and $\NSym$ have been the focus of much recent work, such as~\cite{BLvW, HLMvW11b, LauMas10, BerSanZab13, BesvWi13, BBSSZ15, TewvWi15, Koenig, TvW:2}.

Both the dual immaculate quasisymmetric functions and the quasisymmetric Schur functions expand positively in the fundamental basis (\cite{BBSSZ14}, respectively \cite{HLMvW11a}), as do the Schur functions themselves.  Moreover, a positive formula for expanding the dual immaculate quasisymmetric functions into the \emph{Young quasisymmetric Schur functions} $\{\yqs_\alpha\}$ (a basis for $\QSym$ closely related to the quasisymmetric Schur functions: see, e.g., \cite{LMvWbook}) was recently established in~\cite{AHM18}.

There has also been significant recent interest in the lifting of bases and structures in symmetric and quasisymmetric functions to bases for the algebra of (general) polynomials.  A focus is on developing the combinatorics of these new bases for polynomials and understanding how they relate to known bases.  One particular motivation for this program is to uncover connections to the \emph{Schubert basis} for polynomials which could provide insight towards the long-standing open problem of describing the coefficients appearing in products of Schubert polynomials.

A lifting of the fundamental basis to a basis for polynomials called the \emph{fundamental slide} basis $\{\fs_a\}$ was introduced in~\cite{Assaf.Searles:1}; the fundamental slide basis contains the fundamental quasisymmetric polynomials, and limits to the fundamental quasisymmetric functions.  This was followed by a lifting of the quasisymmetric Schur basis for $\QSym$ to form the \emph{quasi-key} basis $\{\qkey_a\}$ for polynomials in \cite{Assaf.Searles:2}, whose elements expand positively in the fundamental slide basis. The \emph{key polynomials} (well-studied Demazure characters in Type A~\cite{RS95, LasSch90}), which are the analogous lifting of the Schur functions to polynomials, expand positively into the quasi-key polynomials. Moreover, the \emph{extended Schur function} basis for $\QSym$, which is dual to the \emph{shin functions} \cite{CFLSX} (a third Schur-like basis for $\NSym$) also has a polynomial lifting termed the \emph{lock polynomials} \cite{Assaf.Searles:3}. 
See \cite{Pechenik.Searles:2} for a survey of recent developments and progress in this program.

Motivated by this perspective, and by the results of \cite{BBSSZ14} and \cite{AHM18} providing positive formulas for expansions of dual immaculate quasisymmetric functions into important bases for $\QSym$, in this paper we introduce two polynomial lifts of the dual immaculate quasisymmetric functions, which we term the \emph{reverse dual immaculate slide polynomials} $\{\rdis_a\}$ and the \emph{(Young) dual immaculate slide polynomials} $\{\dis_a\}$.  This completes the picture for polynomial lifts of (the duals of) the three Schur-like bases for $\NSym$ discussed above (termed the ``canonical Schur-like bases of $\NSym$'' in \cite{Campbell}). We investigate properties of these new polynomial bases and their relationships to other known bases. The bases we introduce and relationships we establish are summarized in Figure~\ref{fig:qsym} (for quasisymmetric functions) and Figure~\ref{fig:poly} (for polynomials).

\begin{figure}[h]
\begin{center}
\begin{picture}(360,50)
\put(0,40){${\color{gray} \di_\alpha   \longrightarrow \;  \textrm{ (expands positively into) } \longrightarrow \; \yqs_\alpha  \longrightarrow \textrm{ (expands positively into) } \longrightarrow  F_\alpha} $}
\put(2,20){${\updownarrow}$}
\put(182,20){${\color{gray} \updownarrow}$}
\put(355,20){${\color{gray} \updownarrow}$}
\put(0,0){${\rdi_\alpha \longrightarrow \textrm{ (expands positively into) } \longrightarrow} \;\; {\color{gray} \qs_\alpha  \longrightarrow \textrm{ (expands positively into) } \longrightarrow  F_\alpha} $}
\end{picture}
\end{center}
\caption{Here we depict onnections between various bases for $\QSym$, where the vertical arrows represent the passage between Young (top row) and reverse (bottom row) families. Bases introduced and relationships established in this paper are put into context by including previously known information in {\color{gray}gray}.}{\label{fig:qsym}}
\end{figure}

To facilitate this investigation and establish our results, we repeatedly employ a passage between what we term ``reverse'' and ``Young'' versions of families of polynomials and quasisymmetric functions, which may be expressed combinatorially in terms of tableau fillings whose entries decrease (respectively, increase) along rows. This is exemplified in the distinction between the quasisymmetric Schur functions (a reverse family) and the Young quasisymmetric Schur functions (a Young family), where one is obtained from the other by reversing both the composition and the indices of the variable set. The dual immaculate quasisymmetric functions are naturally a Young family; we term the analogous reverse family the \emph{reverse dual immaculate quasisymmetric functions} $\{\rdi_\alpha\}$.

Although this reversal procedure might seem innocuous at first glance, certain concepts and properties are in fact more naturally stated and applied in one of the reverse or Young paradigms than the other, or indeed only manifest in one.  For example, in Theorem~\ref{thm:stablelimit} below we prove that stable limits of reverse dual immaculate slide polynomials exist and are always equal to reverse dual immaculate quasisymmetric functions.  However, one does not obtain analogous stability results for (Young) dual immaculate slide polynomials and (Young) dual immaculate quasisymmetric functions, thus necessitating working in the reverse paradigm for stability results; see Remark~\ref{rmk:noyoungstability}. We note additionally that all combinatorial bases for (general) polynomials we are aware of fall into the reverse paradigm, as demonstrated by \cite[Theorem C]{Sea}. Yet on the other hand, authors have opted to work with Young versions of quasisymmetric families that were originally introduced as reverse families (e.g., \cite{Li15}, \cite{MN-SkewRS}), due to, for example, the closer connection to established theory of symmetric functions enjoyed by the Young versions.

We prove that the reverse dual immaculate slide polynomials form a basis for the polynomial ring, and we establish a positive formula for their expansion in the fundamental slide basis for polynomials. 

\begin{theorem}\label{thm:rdistoslide}
Let $a$ be a weak composition. Then
\[\rdis_{a} = \sum_{S\in \SRIF(a)} \fs_{\wdes_{\SRIF}(S)},\]
where the sum is over \emph{standard reverse immaculate fillings} of shape $a$ (Definition~\ref{def:SSRIF}), and the fundamental slide polynomials are indexed by the \emph{weak descent compositions} (Definition~\ref{def:wdesrdis}) of these fillings.
\end{theorem}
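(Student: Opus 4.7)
The plan is to follow the standard ``standardize and fiber'' strategy used for analogous fundamental-slide expansions throughout the literature (e.g.\ for $\qkey$ in~\cite{Assaf.Searles:2}, or Gessel's original expansion of the Schur basis in the fundamental basis). The argument breaks into three steps: express $\rdis_a$ as a monomial generating function over $\SSRIF(a)$, standardize each such filling to an element of $\SRIF(a)$, and identify the contribution of each standardization fiber with a single fundamental slide polynomial.

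The first step is to recall from the definition of $\rdis_a$ that
\begin{equation*}
\rdis_a \;=\; \sum_{T \in \SSRIF(a)} x^T,
\end{equation*}
where $x^T$ records the multiset of entries of $T$, and to introduce a standardization map $\std : \SSRIF(a) \to \SRIF(a)$. The map replaces the entries of $T$ equal to $1, 2, \ldots$ in turn by $1, 2, \ldots, n$ with $n = |a|$, breaking ties between equal entries in the reading order dictated by the row-decreasing and first-column-strict conditions on reverse immaculate fillings. One then verifies that the output lies in $\SRIF(a)$, giving a partition of $\SSRIF(a)$ into fibers indexed by $\SRIF(a)$.

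The second step, the heart of the argument, is the fiber identity: for each $S \in \SRIF(a)$,
\begin{equation*}
\sum_{\substack{T \in \SSRIF(a) \\ \std(T) = S}} x^T \;=\; \fs_{\wdes_{\SRIF}(S)}.
\end{equation*}
Reconstructing $T$ from $S$ amounts to assigning a weakly increasing sequence $1 \le c_1 \le c_2 \le \cdots \le c_n$ of positive integers to the standard labels $1, \ldots, n$ of $S$; one checks that the semistandard conditions on $T$ force a strict inequality $c_i < c_{i+1}$ precisely when $i$ is a weak descent of $S$, while row positions impose an upper bound on the $c_i$ matching the support condition for $\fs$. By the monomial characterization of fundamental slide polynomials from~\cite{Assaf.Searles:1}, the generating function of such sequences is exactly $\fs_{\wdes_{\SRIF}(S)}$; summing over $S \in \SRIF(a)$ then yields the theorem.

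The main obstacle is the calibration in the second step: showing that $\wdes_{\SRIF}$ is defined so that the combinatorial condition for an index to be a weak descent in an $\SRIF$ matches exactly the condition for two adjacent standard labels to be forced into distinct slide-variables in the corresponding fundamental slide polynomial. The argument will split into cases based on the relative positions of the cells labeled $i$ and $i+1$ in $S$ (same row, different rows within the first column, or otherwise), and in each case the weak-descent definition must produce the correct strict/weak inequality between $c_i$ and $c_{i+1}$ together with the correct variable-index upper bound imposed by the weak composition $a$. Once this calibration is verified, the remainder of the argument is bookkeeping.
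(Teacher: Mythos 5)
Your overall strategy coincides with the paper's: write $\rdis_a$ as the generating function of $\SSRIF(a)$, partition $\SSRIF(a)$ into standardization fibers indexed by $\SRIF(a)$, and show each fiber generates $\fs_{\wdes_{\SRIF}(S)}$. The only presentational difference is that the paper realizes the fiber identity as an explicit weight-preserving bijection from $\{T:\std_{\SSRIF}(T)=S\}$ to the fundamental semistandard skyline fillings $\FSSF(\wdes_{\SRIF}(S))$ (sending the entries of run $r_j$ to row $p_j$), whereas you invoke a compatible-sequence characterization of $\fs$; these are interchangeable.

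The problem is that the ``calibration'' you defer is the entire content of the theorem, and your description of it is not quite right as stated. First, strictness $c_i<c_{i+1}$ is forced exactly when $i\in\des_{\SRIF}(S)$, i.e.\ at a descent of $S$; the weak descent composition is a derived object built from the descents together with row data, so ``precisely when $i$ is a weak descent'' is not a well-formed condition. Second, and more seriously, the upper bounds are not the per-cell bounds $c_i\le(\text{row index of }i)$ coming from (SSRIF3). The bounds encoded in $\fs_{\wdes_{\SRIF}(S)}$ are run-by-run: every $c_i$ with $i$ in run $r_j$ must satisfy $c_i\le p_j$, where $p_j=\min(p_{j+1}-1,\ \text{lowest row containing an entry of }r_j)$, and this can be strictly smaller than the row index of the cell containing $i$. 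One must prove the two constraint systems are equivalent: within a run the row indices weakly decrease as the labels increase, so the largest label of the run sits in its lowest row and the chain $c_i\le c_{i+1}\le\cdots$ propagates that single bound backward through the whole run, while strictness at the descent separating $r_j$ from $r_{j+1}$ produces the $p_{j+1}-1$ truncation. You must also dispose of the case $\wdes_{\SRIF}(S)=\emptyset$ (some $p_j\le 0$), showing the fiber is empty exactly then --- this is the paper's Lemma~\ref{lem:bothsidesempty}. None of this is difficult, but as written your proposal asserts the conclusion of this step rather than proving it.
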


One can define a \emph{stable limit} for reverse families of polynomials, via prepending $m$ zeros to the weak composition indexing the polynomial and letting $m\to \infty$. We prove that the stable limits of reverse dual immaculate slide polynomials are exactly the reverse dual immaculate quasisymmetric functions $\{\rdi_a\}$. 

\begin{theorem}\label{thm:stablelimit}
Let $a$ be a weak composition. Then 
\[\lim_{m\to \infty} \rdis_{0^m\times a} = \rdi_{\flatten(a)},\]
where $0^m\times a$ denotes the weak composition formed by prepending $m$ zeros to $a$, and $\flatten(a)$ is the composition obtained by removing all zero parts of $a$.
\end{theorem}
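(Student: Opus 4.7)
The plan is to apply Theorem~\ref{thm:rdistoslide} to expand the left-hand side in the fundamental slide basis, then pass the stable limit inside this (finite) sum. By that theorem,
\[\rdis_{0^m \times a} = \sum_{S \in \SRIF(0^m \times a)} \fs_{\wdes_\SRIF(S)}\]
for every $m \geq 0$. I would first exhibit a natural bijection $\varphi_m : \SRIF(0^m \times a) \to \SRIF(a)$ that deletes the $m$ prepended empty rows without changing any entries. This map is well-defined and invertible because empty rows contribute no cells and impose no new inequality constraints on existing entries, and these bijections are compatible with increasing $m$, fitting into a chain $\SRIF(a) \hookrightarrow \SRIF(0 \times a) \hookrightarrow \SRIF(0^2 \times a) \hookrightarrow \cdots$.

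The technical crux is to verify that under this bijection, the weak descent composition changes only by prepending zeros: for $S \in \SRIF(0^m \times a)$ with $T = \varphi_m(S) \in \SRIF(a)$, one should have $\flatten(\wdes_\SRIF(S)) = \flatten(\wdes_\SRIF(T))$. This requires unpacking Definition~\ref{def:wdesrdis} and checking that empty rows neither introduce new descents nor destroy existing ones among the populated rows. Granted this, combined with the standard stability $\lim_{m \to \infty} \fs_{0^m \times b} = F_{\flatten(b)}$ for fundamental slide polynomials from \cite{Assaf.Searles:1}, we obtain
\[\lim_{m \to \infty} \rdis_{0^m \times a} = \sum_{T \in \SRIF(a)} F_{\flatten(\wdes_\SRIF(T))}.\]

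To close the argument, I would identify this right-hand side with $\rdi_{\flatten(a)}$. The set $\SRIF(a)$ is in natural bijection with the set of standard reverse immaculate tableaux of composition shape $\flatten(a)$ (collapsing any internal empty rows of $a$), with matching flattened weak descent compositions, so the sum agrees with the fundamental expansion of $\rdi_{\flatten(a)}$ (the reverse analogue of the fundamental expansion of $\di_\alpha$ from \cite{BBSSZ14}). The main obstacle is the bookkeeping in the technical crux above: one must trace through Definition~\ref{def:wdesrdis} to confirm that prepending empty rows yields exactly prepended zeros in the weak descent composition, and moreover that this propagation is compatible with any internal zero rows of $a$, so that flattening removes exactly the expected indices.
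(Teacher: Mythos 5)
Your overall strategy is viable and is genuinely different from the paper's: the paper proves this theorem directly at the level of semistandard objects, via a weight-preserving bijection $\psi\colon \SSRIT_m(\flatten(a))\to\SSRIF(0^m\times a)$ showing $\rdis_{0^m\times a}(x_1,\dots,x_m,0,\dots)=\rdi_{\flatten(a)}(x_1,\dots,x_m,0,\dots)$ for each $m$, with no appeal to Theorem~\ref{thm:rdistoslide}; it then derives the stabilization of the fundamental slide expansion afterwards (Corollary~\ref{cor:stablelimit}). You are effectively running that corollary in reverse, which is logically permissible since Theorem~\ref{thm:rdistoslide} is proved independently. What the paper's route buys is an elementary monomial-level argument; what yours buys is that the fundamental expansion comes along for free.

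However, there is a concrete error in your stated crux. The identity $\flatten(\wdes_{\SRIF}(S))=\flatten(\wdes_{\SRIF}(\varphi_m(S)))$ is false whenever $\wdes_{\SRIF}(\varphi_m(S))=\emptyset$, which genuinely occurs: see Example~\ref{ex:notwdes}, where one of the four elements of $\SRIF(3,0,2)$ has empty weak descent composition and hence contributes nothing to $\rdis_{(3,0,2)}$, yet after prepending enough zeros the corresponding filling acquires a nonempty weak descent composition and does contribute. Consequently your displayed limit $\sum_{T\in\SRIF(a)}F_{\flatten(\wdes_{\SRIF}(T))}$ undercounts: for $a=(3,0,2)$ it would produce three terms while $\rdi_{(3,2)}$ has four (Example~\ref{ex:direv32}). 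The repair is straightforward but must be made: from Definition~\ref{def:wdesrdis} one checks that prepending $m$ zeros replaces each $p_j$ by $p_j+m$, so for $m\geq n$ every $p_j$ is positive, $\wdes_{\SRIF}(S)\neq\emptyset$ for all $S\in\SRIF(0^m\times a)$, and its flattening equals the descent composition of $S$ (the run lengths). The correct limit is therefore $\sum_{S\in\SRIT(\flatten(a))}F_{\des_{\SRIT}(S)}$, which is $\rdi_{\flatten(a)}$ by definition (\ref{eq:direv}); you should index over $\SRIT(\flatten(a))$ with descent compositions, not over $\SRIF(a)$ with flattened weak descent compositions.
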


As a corollary, our Theorem~\ref{thm:rdistoslide} stabilizes to, and thus extends, a tableau formula for expanding reverse dual immaculate quasisymmetric functions in the fundamental basis for $\QSym$; this tableau formula is the reverse analogue of the formula of \cite{BBSSZ14} for the fundamental expansion of a dual immaculate quasisymmetric function.

We also provide the first formula for quasi-key polynomials in terms of \emph{standard} tableau objects, in contrast to the existing formulas that are all in terms of semistandard objects. This standard formula is needed for our proof, via an insertion algorithm, that reverse dual immaculate slide polynomials expand positively in the quasi-key polynomials.

After establishing these results in the reverse perspective, we pivot our focus to the Young perspective in order to make use of and extend results and theory already developed for quasisymmetric functions. To this end, we introduce and utilize a Young version $\{\yqk_a\}$ of the quasi-key polynomials, lifting the Young quasisymmetric Schur basis for $\QSym$, and a Young version $\{\yfs_a\}$ of the fundamental slide polynomials, which provides an alternative lifting of the fundamental basis. We utilize Theorem~\ref{thm:rdistoslide} to obtain a formula for the Young fundamental slide expansion of the dual immaculate slide polynomials. We show that the finite-variable version of the formula of \cite{BBSSZ14} for the fundamental expansion of a dual immaculate quasisymmetric function is obtained as special case of our formula, hence it lifts the expansion formula of \cite{BBSSZ14} from $\QSym$ to the polynomial ring.

We extend an insertion algorithm of \cite{AHM18} from tableaux of composition shape to fillings of weak composition shape, and apply this \emph{weak insertion} algorithm to prove a positive formula for the expansion of dual immaculate slide polynomials into Young quasi-key polynomials.

\begin{theorem}\label{thm:distoyqk}
Let $a$ be a weak composition. Then
\[\dis_a = \sum_b c_{a,b} \yqk_b,\]
where $c_{a,b}$ is the number of \emph{dual immaculate recording fillings} of shape $b$ with \emph{row strip shape} $\rev(a)$ (Definition~\ref{def:DIRF}).
\end{theorem}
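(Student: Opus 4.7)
The strategy is a bijective proof by insertion, following the blueprint of \cite{AHM18}. The plan is to first express $\dis_a$ as a weight-generating sum over semistandard immaculate fillings of weak composition shape $a$, $\dis_a = \sum_{T \in \SSIF(a)} x^{\wt(T)}$; this Young-side combinatorial model should be essentially built into the definition of $\dis_a$ and parallels the reverse-side model underlying Theorem~\ref{thm:rdistoslide}. The goal is then to construct a weight-preserving bijection sending each $T \in \SSIF(a)$ to a pair $(P,Q)$, where $P$ is a semistandard Young quasi-key tableau of some shape $b$ (so that $\sum_P x^{\wt(P)} = \yqk_b$ by the combinatorial definition of $\yqk_b$) and $Q$ is a dual immaculate recording filling of shape $b$ with row strip shape $\rev(a)$. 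Grouping the resulting expansion by $Q$ then yields the claimed formula with $c_{a,b}$ counting the number of admissible $Q$'s.

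The weak insertion extending the algorithm of \cite{AHM18} is the heart of the proof. The original algorithm consumes a semistandard immaculate tableau of \emph{composition} shape and produces an insertion tableau that is a standard Young composition tableau, i.e., an object enumerating $\yqs_\alpha$. To prove Theorem~\ref{thm:distoyqk}, I would broaden the input side to weak composition shapes (permitting zero rows) and broaden the output side so that the insertion tableau $P$ lies in the class enumerating $\yqk_b$ rather than $\yqs_\alpha$. The procedure reads the entries of $T$ in a prescribed reverse reading order, and at each step inserts an entry into the current insertion tableau $P$ by a bumping rule adapted from \cite{AHM18}; the recording tableau $Q$ grows by appending the newly added cell. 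The reversal $\rev(a)$ appears naturally because the rows of $a$ are consumed from bottom to top, and therefore correspond in reverse order to the row strip recorded in $Q$.

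Invertibility is then obtained by the standard reverse bumping argument: given $(P,Q)$, the maximal cell of $Q$ (in the insertion order) identifies the last cell placed, and reverse bumping through $P$ recovers the inserted entry and a smaller pair $(P',Q')$. Iterating this inverse reconstructs $T$. The bijection is patently weight preserving on the $P$ side (each step merely relocates one entry of $T$ into $P$), so
\[\dis_a \;=\; \sum_{T \in \SSIF(a)} x^{\wt(T)} \;=\; \sum_{Q} \sum_{P : (P,Q) \text{ valid}} x^{\wt(P)} \;=\; \sum_b c_{a,b} \yqk_b,\]
which is the theorem.

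The main obstacle is extending the bumping rules of \cite{AHM18} to the weak composition setting and proving that $P$ remains a semistandard Young quasi-key tableau at every intermediate step; zero rows can both appear and disappear in the shape during insertion, and the standard invariants of quasi-key tableaux must be preserved through each bump. A secondary technical point, which requires careful bookkeeping, is verifying that $Q$ is not merely some filling but precisely a dual immaculate recording filling of row strip shape $\rev(a)$: one must track how the original row indices of the entries of $T$ are translated into the row indices of the cells added to $Q$, and check that the reversal in ordering produces exactly the row strip condition rather than a weaker one.
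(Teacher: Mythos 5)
Your overall architecture (insertion bijection extending \cite{AHM18}, recording filling of row strip shape $\rev(a)$, reverse bumping for invertibility) matches the paper in spirit, but you have placed the bijection at the \emph{semistandard} level, and that is where the proposal has a genuine gap. You want a weight-preserving map $T \mapsto (P,Q)$ with $T \in \SSIF(a)$ and $P \in \YSSF(b)$, which requires the inserted tableau $P$ to satisfy the flag condition (YSSF3): no entry in row $i$ is smaller than $i$. Nothing in the bumping procedure controls this --- insertion freely relocates entries between rows --- and you give no mechanism for why the output would respect the row bounds, nor for how the bumping rule resolves ties among repeated entries while preserving the triple condition. You name "preserving the standard invariants of quasi-key tableaux through each bump" as the main obstacle, but overcoming it \emph{is} the theorem; as written the proof does not exist yet.

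The paper avoids this entirely by never running insertion on semistandard objects. It first expands both sides into Young fundamental slide polynomials indexed by \emph{standard} fillings, for which the flag conditions are dropped by definition: $\dis_a = \sum_{S\in \SIF(a)} \yfs_{\wdes_{\SIF}(S)}$ (Theorem~\ref{thm:ydistoyslide}) and $\yqk_b = \sum_{S\in \YSF(b)} \yfs_{\wdes_{\YSF}(S)}$ (Theorem~\ref{thm:yqktoyslide}); the second of these is exactly the "standard formula" the paper develops in Section~\ref{sec:qktofs} for this purpose. The insertion algorithm is then applied only to reading words of standard immaculate fillings, producing a pair $(P,Q)$ with $P \in \YSF(b)$ and $Q$ a DIRF, and the key technical content is Lemma~\ref{lem:wdespreserved}: the weak descent composition is preserved, $\wdes_{\SIF}(U) = \wdes_{\YSF}(P)$. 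All of the flag/row-bound bookkeeping that your semistandard approach would have to carry through every bump is packaged once and for all into the weak descent composition and the fundamental slide expansion. If you want to salvage your outline, the repair is to replace the semistandard bijection with this two-step reduction: prove the two standard-object slide expansions, then prove the insertion on standard fillings preserves $\wdes$.
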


Theorem~\ref{thm:distoyqk} lifts the formula of \cite{AHM18} for the expansion of dual immaculate quasisymmetric functions into Young quasisymmetric Schur functions, from $\QSym$ to the full polynomial ring.

Finally, we return our attention to the reverse paradigm. This allows us to establish a connection to the well-studied \emph{Demazure atom} basis for polynomials~\cite{LasSch90},~\cite{Mas09}, which are realized both as the $t=q=\infty$ specialization of nonsymmetric Macdonald polynomials and, additionally, as characters of quotients of Demazure modules. We use Theorem~\ref{thm:distoyqk} to give a positive formula for expanding reverse dual immaculate slide polynomials into quasi-key polynomials, which further establishes, by a result of~\cite{Sea}, that the reverse dual immaculate slide polynomials expand positively in  Demazure atoms. Taking stable limits of this formula and applying Theorem~\ref{thm:stablelimit}, we obtain that reverse dual immaculate quasisymmetric functions expand positively in quasisymmetric Schur functions, and thus expand positively in Demazure atoms. In particular, this establishes that the dual immaculate quasisymmetric functions of \cite{BBSSZ14} expand positively in Demazure atoms up to a reversal of variables.

The polynomial bases we introduce in this paper and the relationships we establish between them are illustrated in Figure~\ref{fig:poly}. Each basis in Figure~\ref{fig:poly} is a lifting to polynomials of the basis for quasisymmetric functions in the same location in Figure~\ref{fig:qsym}.

\begin{figure}[h]
\begin{center}
\begin{picture}(360,50)
\put(0,40){${\dis_a} \;\;  {\longrightarrow} \;   \textrm{ {(expands positively into)} } \; {\longrightarrow} \;\; { \yqk_a} \;  {\longrightarrow} \; \textrm{ {(expands positively into)} } {\longrightarrow} { \;\; \yfs_a} $}
\put(2,20){${\updownarrow}$}
\put(180,20){${\updownarrow}$}
\put(352,20){${\updownarrow}$}
\put(0,0){${\rdis_a \longrightarrow \textrm{ (expands positively into) } \longrightarrow} {\color{gray} \; \qkey_a} \; {\color{gray}\longrightarrow  \textrm{(expands positively into) } \longrightarrow \; \fs_a} $}
\end{picture}
\end{center}
\caption{Here we depict connections between lifts of the bases for $\QSym$ in Figure~\ref{fig:qsym} to bases for the polynomial ring, where the vertical arrows represent the passage between Young (top row) and reverse (bottom row) families. Bases introduced and relationships established in this paper are put into context by including previously known information in {\color{gray}gray}. }{\label{fig:poly}}
\end{figure}

\subsection{Organization}

This paper is organized as follows. In Section~\ref{sec:background}, we provide background concerning bases for quasisymmetric functions and recall some known results that we will extend. In Section~\ref{sec:revdis}, we define the \emph{reverse dual immaculate quasisymmetric functions}, the \emph{reverse dual immaculate slide polynomials}, and important concepts we will use throughout the paper such as the \emph{weak descent composition}.  
In Section~\ref{sec:revtofs}, we prove Theorem~\ref{thm:rdistoslide}. 
In Section~\ref{sec:revstable}, we establish when a reverse dual immaculate slide polynomial is quasisymmetric, prove Theorem~\ref{thm:stablelimit}, and use this to prove that the formula of Theorem~\ref{thm:rdistoslide} stabilizes to the formula defining reverse dual immaculate quasisymmetric functions in terms of their fundamental expansion.  
In Section~\ref{sec:qktofs}, we establish a new formula for the fundamental slide expansion of a quasi-key polynomial. Finally, in Section~\ref{sec:distoyqk} we introduce the \emph{(Young) dual immaculate slide polynomials}, and Young versions of the quasi-key and fundamental slide bases. We extend an insertion algorithm of \cite{AHM18} to weak composition diagrams, and we use this algorithm as well as our standard formula from Section~\ref{sec:qktofs} to prove Theorem~\ref{thm:distoyqk}. As a corollary, the reverse dual immaculate slide polynomials expand positively in the quasi-key basis and therefore the basis of Demazure atoms; as a further corollary, the reverse dual immaculate quasisymmetric functions also expand positively in Demazure atoms.

In this paper, we introduce several families of combinatorial objects that generate the various bases we consider. To aid the reader in keeping track of these bases and the associated objects, we adopt some standardized notation. Throughout, we will use the word \emph{tableaux} to refer to assignments of integers to diagrams of \emph{compositions}; these generate families of \emph{quasisymmetric functions}. In contrast, we will use the word \emph{fillings} to refer to assignments of integers to diagrams of \emph{weak compositions}; these generate families of \emph{polynomials}. We routinely consider both semistandard and standard versions of both tableaux and fillings; we will always use $T$ to denote a semistandard object and $S$ to denote a standard object. Important bases and the associated tableaux/fillings we consider are summarized in the Appendix in Figures~\ref{table:qsym} (for quasisymmetric functions) and~\ref{fig:polytable} (for polynomials).

\section{Background on quasisymmetric functions}\label{sec:background}

In this section we review known bases for $\QSym$ relevant to this paper, including the fundamental quasisymmetric functions, the dual immaculate quasisymmetric functions, and the (Young) quasisymmetric Schur functions.  We also review the expansions of the dual immaculate quasisymmetric functions that we will extend to polynomials: the fundamental expansion~\cite{BBSSZ14} and the Young quasisymmetric Schur expansion~\cite{AHM18}.

A \emph{composition} $\alpha$ is a finite sequence $\alpha=(\alpha_1, \ldots , \alpha_\ell)$ of positive integers. The number $\ell$ is the \emph{length} of $\alpha$, sometimes denoted $\ell(\alpha)$, and the entries of $\alpha$ are called the \emph{parts} of $\alpha$. A \emph{weak composition} $a=(a_1, \ldots , a_\ell)$ is a finite sequence of nonnegative integers; the length of $a$ is $\ell$. When the parts of a composition $\alpha$ or weak composition $a$ sum to $n$, we say that $\alpha$ (respectively, $a$) is a composition (respectively, weak composition) of $n$.

The algebra $\Sym$ of symmetric functions is the collection of all bounded degree formal power series $f$ on an infinite alphabet $x_1,x_2, \hdots$ such that the coefficient of any monomial $x_{1}^{\alpha_1} x_{2}^{\alpha_2} \cdots x_{\ell}^{\alpha_\ell}$ in $f$ is equal to the coefficient of $x_{\sigma_1}^{\alpha_1} x_{\sigma_2}^{\alpha_2} \cdots x_{\sigma_\ell}^{\alpha_\ell}$ for any permutation $\sigma$.  Useful and widely-studied bases for $\Sym$ include the \emph{monomial symmetric functions}, the \emph{power sum symmetric functions}, the \emph{elementary symmetric functions}, the \emph{complete homogeneous symmetric functions}, and the \emph{Schur functions}; see, e.g., Chapter 7 of \cite{Sta99}.  The algebra $\Sym$ is contained inside a larger algebra of \emph{quasisymmetric functions}, denoted by $\QSym$, which is the collection of all bounded degree formal power series $f$ on an infinite alphabet $x_1, x_2, \hdots$ such that the coefficient of $x_{1}^{\alpha_1} x_{2}^{\alpha_2} \cdots x_{\ell}^{\alpha_\ell}$ in $f$ is equal to coefficient of $x_{j_1}^{\alpha_1} x_{j_2}^{\alpha_2} \cdots x_{j_\ell}^{\alpha_\ell}$ in $f$ for any sequence of positive integers $1 \le j_1 < j_2 < \cdots < j_\ell$ and any composition $(\alpha_1,\alpha_2, \hdots , \alpha_\ell)$.  See~\cite{Mas19} for an exposition of recent developments in the study of quasisymmetric functions.  For our purposes, we will sometimes restrict to symmetric and quasisymmetric functions in finitely many variables.

A natural basis for $\QSym$ is a generalization of the monomial symmetric functions called the \emph{monomial quasisymmetric functions} \cite{Ges84}. Given a composition $\alpha$ of length $\ell$, the monomial quasisymmetric function $M_\alpha$ is defined by 
\[M_{\alpha} (x_1, x_2, \hdots ) = \sum_{i_1 < i_2 < \cdots < i_\ell} x_{i_1}^{\alpha_1} x_{i_2}^{\alpha_2} \cdots x_{i_\ell}^{\alpha_\ell}.\]  
Another important and widely-used basis for $\QSym$ is the \emph{fundamental quasisymmetric functions} \cite{Ges84}. These are defined by 
\[F_{\alpha} = \sum_{\beta \preceq \alpha} M_{\beta},\] 
where $\beta \preceq \alpha$ if $\alpha$ can be obtained by summing consecutive entries of $\beta$.

The algebra $\Sym$ is also realised as a quotient of the algebra $\NSym$ of noncommutative symmetric functions~\cite{GKLLRT95}, which is generated by elements $\{H_1, H_2, \ldots\}$ with no relations. $\NSym$ has an (additive) basis consisting of the \emph{complete homogeneous functions} $H_\alpha$, where $H_\alpha$ is defined to be the product $H_{\alpha_1} \cdots H_{\alpha_\ell}$.    
It is dual to $\QSym$ under the pairing $\langle \cdot , \cdot \rangle$ defined by $\langle H_\alpha, M_\beta \rangle = \delta_{\alpha, \beta}$. 

The \emph{immaculate basis} $\{\im_\alpha\}$ for $\NSym$ was introduced in \cite{BBSSZ14} as a noncommutative analogue of the Schur basis for $\Sym$. The immaculate functions $\im_\alpha$ are constructed using a noncommutative analogue of Bernstein's creation operators, which generate Schur functions. 
To be precise, the \emph{noncommutative Bernstein operators} $\mathbb{B}_m$ generalize the classical Bernstein operators by replacing the complete homogeneous symmetric function with its noncommutative analogue $H_{\alpha}$ and the elementary symmetric function with Gessel's fundamental quasisymmetric function $F_{\alpha}$.  This substitution results in the operator $$\mathbb{B}_m = \sum_{i \ge 0} (-1)^i H_{m+i} F_{1^i}^{\perp},$$ where $F_{\alpha}^{\perp}$ is the linear transformation of $\NSym$ that is adjoint to multiplication by $F_{\alpha}$ in $\QSym$.  The \emph{immaculate function} $\im_\alpha$ is then defined by 
\[\im_{\alpha} := \mathbb{B}_{\alpha_1} \mathbb{B}_{\alpha_2} \cdots \mathbb{B}_{\alpha_m} (1).\] 
The immaculate basis is \emph{Schur-like} in the following precise sense. When the index is a partition $\lambda$, the immaculate function $\im_{\lambda}$ maps to the Schur function $s_{\lambda}$ under the natural projection from $\NSym$ to $\Sym$. (This projection is sometimes referred to as the \emph{forgetful map}, as it ``forgets" that the variables do not commute.)

The \emph{dual immaculate} basis $\{\di_\alpha\}$ for $\QSym$ is the dual basis to the immaculate basis for $\NSym$.  The \emph{dual immaculate quasisymmetric functions} $\di_\alpha$ are described combinatorially in terms of \emph{immaculate tableaux} \cite{BBSSZ14} as follows. Let $D(\alpha)$ denote the diagram of $\alpha$, i.e. the box diagram whose $i^{th}$ row from the bottom consists of $\alpha_i$ left-justified boxes. 

\begin{definition}\label{def:SSIT}
Let $\alpha$ be a composition of $n$. A \emph{semistandard immaculate tableau} of shape $\alpha$ is a filling of $D(\alpha)$ with positive integers so that:
\begin{itemize}
\item the sequence of entries in each row is weakly increasing from left to right;
\item the sequence of entries in the leftmost column is strictly increasing from bottom to top.
\end{itemize}
A semistandard immaculate tableau $S\in \SSIT(\alpha)$ is said to be \emph{standard} if the entries of $S$ are $1,2,\ldots , n$ each used exactly once. Let $\SSIT(\alpha)$ (respectively, $\SIT(\alpha)$) denote the set of semistandard (respectively, standard) immaculate tableaux of shape $\alpha$.  
\end{definition}

The \emph{weight} $\wt(T)$ of $T\in \SSIT(\alpha)$ is the weak composition whose $i^{th}$ part is the number of occurrences of $i$ in $T$. Note that we are using French notation rather than the English notation used in~\cite{BBSSZ14} and therefore our semistandard immaculate tableaux can be obtained from those in~\cite{BBSSZ14} by a reflection across a horizontal line.

\begin{ex}\label{ex:SSIT122}
The thirteen semistandard immaculate tableaux of shape $\alpha=(1,2,2)$ with entries in the set $\{1,2,3,4\}$ are shown in Figure~\ref{fig:di122} below.

\begin{figure}[ht]
  \begin{center}
    \begin{displaymath}
      \begin{array}{c@{\hskip2\cellsize}c@{\hskip2\cellsize}c@{\hskip2\cellsize}c@{\hskip2\cellsize}c@{\hskip2\cellsize}c@{\hskip2\cellsize}c@{\hskip2\cellsize}c@{\hskip2\cellsize}c}
        \vline\tableau{  3 & 3 \\ 2 & 2 \\ 1 } &
        \vline\tableau{   3 & 4 \\ 2 & 2 \\ 1 } &
        \vline\tableau{   4 & 4 \\ 2 & 2 \\ 1 } &
        \tableau{ 3 & 3 \\ 2 &3 \\ 1 }  &
        \tableau{  3 & 4 \\ 2 & 3 \\ 1 } &
        \tableau{   4 & 4 \\ 2 & 3 \\ 1} &
        \tableau{ 3 & 3 \\ 2 & 4 \\ 1 } \\ \\
     \tableau{   3 & 4 \\ 2 & 4 \\ 1 } &
       \tableau{  4 & 4 \\ 2 & 4 \\ 1 } &
       \tableau{  4 & 4 \\ 3 & 3 \\ 1 } &
       \tableau{ 4 & 4 \\ 3 & 4 \\ 1} &
       \tableau{   4 & 4 \\ 3 & 3 \\ 2 } &
       \tableau{   4 & 4 \\ 3 & 4 \\ 2 }  & &
      \end{array}
    \end{displaymath}
  \end{center}
  \caption{The thirteen immaculate tableaux of shape $(1,2,2)$ with entries in the set $\{1,2,3,4\}$.}\label{fig:di122}
  \end{figure}
\end{ex}

\begin{theorem}\cite{BBSSZ14}\label{thm:ditomonomial}
Let $\alpha$ be a composition. Then
\[\di_\alpha = \sum_{T\in \SSIT(\alpha)}x^{\wt(T)},\] 
where $x^{\wt(T)}$ is the monomial in which the exponent of $x_i$ is the $i^{th}$ part of $\wt(T)$.
\end{theorem}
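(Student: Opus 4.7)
Let $D_\alpha := \sum_{T \in \SSIT(\alpha)} x^{\wt(T)}$; the plan is to verify that $D_\alpha$ satisfies the defining property of the dual immaculate basis, namely $\langle \im_\alpha, D_\beta \rangle = \delta_{\alpha,\beta}$ under the pairing $\langle H_\alpha, M_\beta\rangle = \delta_{\alpha,\beta}$, and therefore coincides with $\di_\alpha$.

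First I would show that $D_\alpha$ lies in $\QSym$. For any strictly increasing chain $j_1 < \cdots < j_\ell$ of positive integers and any composition $\gamma = (\gamma_1, \ldots, \gamma_\ell)$ of $|\alpha|$, the coefficient of $x_{j_1}^{\gamma_1} \cdots x_{j_\ell}^{\gamma_\ell}$ in $D_\alpha$ is the number of $T \in \SSIT(\alpha)$ using entry $j_k$ exactly $\gamma_k$ times. An order-preserving relabeling of the entries provides an explicit bijection between such fillings for any two increasing chains, since both the row-weak-increase and leftmost-column-strict-increase conditions are preserved. Therefore $D_\alpha = \sum_\gamma d_{\alpha,\gamma} M_\gamma$, where $d_{\alpha,\gamma}$ counts $T \in \SSIT(\alpha)$ with flattened weight $\gamma$.

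To verify duality, I would expand $\im_\alpha$ in the $H$-basis using the Jacobi--Trudi-type identity
\[\im_\alpha = \sum_{\sigma \in S_{\ell(\alpha)}} \operatorname{sgn}(\sigma)\, H_{\alpha + \rho - \sigma(\rho)},\]
where $\rho = (\ell(\alpha)-1, \ldots, 1, 0)$ and terms with negative entries are set to zero. (This identity is derivable by unraveling the creation-operator recursion $\im_\alpha = \mathbb{B}_{\alpha_1} \im_{(\alpha_2,\ldots,\alpha_{\ell(\alpha)})}$; if one prefers, an equivalent route is to verify directly that $\mathbb{B}_m$ has a dual operator on $\QSym$ whose combinatorial action on $D_\beta$ is the removal of the bottom row when it has length $m$ and zero otherwise, giving the duality inductively.) Combining the above with the monomial expansion from the first paragraph yields
\[\langle \im_\alpha, D_\beta \rangle = \sum_{\sigma} \operatorname{sgn}(\sigma)\, d_{\beta, \alpha + \rho - \sigma(\rho)},\]
a signed count of pairs $(\sigma, T)$ with $T \in \SSIT(\beta)$ of weight $\alpha + \rho - \sigma(\rho)$. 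The goal is to exhibit a sign-reversing involution on these pairs whose fixed points are precisely the pairs $(\mathrm{id}, T_\alpha)$ with $\alpha = \beta$, where $T_\alpha$ is the unique SSIT of shape $\alpha$ with entry $i$ filling row $i$.

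The main obstacle is constructing this sign-reversing involution. The natural model is the Lindström--Gessel--Viennot approach: interpret each row of $T$ as a lattice path whose endpoints are shifted by $\alpha + \rho - \sigma(\rho)$, and swap tails of two ``colliding'' paths to produce the involution, the transposition of endpoints changing $\operatorname{sgn}(\sigma)$. The subtle point is that immaculate tableaux impose strict increase only in the \emph{leftmost} column, rather than in every column as for semistandard Young tableaux, so the usual notion of path non-intersection is too strong: paths may genuinely overlap in interior columns. Accordingly, one must design the collision-detection and swap rule using only the leftmost-column data, and then verify that the only unmatched pairs are exactly $(\mathrm{id}, T_\alpha)$, which forces $\alpha = \beta$. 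Getting this swap rule to interact correctly with the boundary (so that the identity permutation paired with $T_\alpha$ is the unique fixed point) is where the argument becomes delicate.
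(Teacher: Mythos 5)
The paper offers no proof of this statement---it is imported directly from \cite{BBSSZ14}---so your attempt has to stand on its own. Your overall strategy (set $D_\alpha := \sum_{T\in\SSIT(\alpha)} x^{\wt(T)}$, check quasisymmetry, then verify $\langle \im_\alpha, D_\beta\rangle = \delta_{\alpha,\beta}$) is sound, and the first two ingredients are fine: the relabelling argument for quasisymmetry works, and the Jacobi--Trudi identity $\im_\alpha = \sum_\sigma \operatorname{sgn}(\sigma)\, H_{\alpha+\rho-\sigma(\rho)}$ is indeed a theorem of \cite{BBSSZ14} (modulo the small point that the index may acquire zero parts, so the pairing should be taken against $M_{\flatten(\alpha+\rho-\sigma(\rho))}$). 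The genuine gap is exactly the one you flag yourself: the sign-reversing involution on pairs $(\sigma,T)$ is never constructed, and it is the entire content of the theorem. The Lindstr\"om--Gessel--Viennot template does not transfer here, for the reason you correctly diagnose---rows of an immaculate tableau are coupled only through the leftmost column, so the tableau does not decompose into a family of paths governed by a non-intersection condition, and there is no ``collision'' to detect in interior columns. As written, your argument reduces the theorem to an open combinatorial construction rather than proving it; your correct identification of the unique fixed point $(\mathrm{id}, T_\alpha)$ does not substitute for the involution.

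The proof in \cite{BBSSZ14} avoids signed cancellation entirely. One first proves a right Pieri rule expressing $\im_\alpha\cdot H_s$ as a multiplicity-free sum of $\im_\beta$ over compositions $\beta$ obtained from $\alpha$ by adding $s$ boxes in an admissible way. Iterating it over $H_\gamma = H_{\gamma_1}\cdots H_{\gamma_\ell}$ shows that the coefficient of $\im_\beta$ in $H_\gamma$ counts chains of admissible additions, and these chains biject with elements of $\SSIT(\beta)$ of weight $\gamma$; by duality that coefficient is precisely the coefficient of $M_\gamma$ in $\di_\beta$, which gives the monomial expansion with no cancellation to manage. Your parenthetical alternative---showing that the adjoint of $\mathbb{B}_m$ acts on $D_\beta$ by deleting a bottom row of length $m$ and annihilating everything else---is essentially the dual of this Pieri argument and is the most promising way to repair your proof, but it too requires a substantive combinatorial verification that you do not supply. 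Either way, the missing step is not a boundary technicality; it is the theorem.
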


\begin{ex}{\label{ex:dI122}}
Let $\alpha=(1,2,2)$. By Example~\ref{ex:SSIT122}, we have 
\begin{align*}
\di_{\alpha} ( x_1,x_2,x_3,x_4) & = x^{1220}+x^{1211}+x^{1202} + x^{1130}+2x^{1121}+2x^{1112} \\ 
                                                         & + x^{1103}+x^{1022}+x^{1013}+x^{0122}+x^{0113},
                                                         \end{align*}
where, for brevity, we write $a_1a_2\cdots a_\ell$ for weak compositions $a=(a_1,a_2, \ldots , a_\ell)$ appearing as exponent vectors.
\end{ex}

The integer $i$ is said to be a \emph{descent} of a standard immaculate tableau $S$ if $i+1$ is in a row strictly higher than $i$ in $S$. Let $\des_{\SIT}(S)$ denote the set of all descents of $S$. 
If the descents of $S$ are $i_1 < i_2 < \ldots < i_j$, then the \emph{descent composition} of $S$ is the composition $(i_1, i_2-i_1, \ldots i_j-i_{j-1}, n-i_j)$.

\begin{ex}\label{ex:SIT23}
Let $\alpha=(2,3)$. The four $\SIT$s of shape $\alpha$, along with their descent compositions, are shown in Figure~\ref{fig:SIT23} below. 
\end{ex}

\begin{figure}[ht]
  \begin{center}
    \begin{displaymath}
      \begin{array}{c@{\hskip2\cellsize}c@{\hskip2\cellsize}c@{\hskip2\cellsize}c@{\hskip2\cellsize}c@{\hskip2\cellsize}c}
        \vline\tableau{  3 & 4 & 5 \\ 1 & 2 } &
        \vline\tableau{  2 & 4 & 5 \\ 1 & 3 } &
        \vline\tableau{  2 & 3 & 5 \\ 1 & 4 } &
        \vline\tableau{  2 & 3 & 4 \\ 1 & 5 } \\ \\
        (2,3) & (1,2,2) & (1,3,1) & (1,4)
      \end{array}
    \end{displaymath}
  \end{center}
  \caption{The four $\SIT$s for $\alpha=(2,3)$ and their descent compositions.}\label{fig:SIT23}
  \end{figure}

The expansion of dual immaculate quasisymmetric functions into the fundamental basis for $\QSym$ is nonnegative. The following formula for the expansion was established by \cite{BBSSZ14}.

\begin{proposition}{\cite{BBSSZ14}}\label{prop:ditof}
The dual immaculate quasisymmetric functions $\di_{\alpha}$ expand in the fundamental basis via the formula 
\[\di_{\alpha} = \sum_{S\in \SIT(\alpha)}F_{\des_{\SIT}(S)},\]
where $F_{\des_{\SIT}(S)}$ is the fundamental quasisymmetric function associated to the descent composition of $S$.
\end{proposition}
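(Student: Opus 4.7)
My plan is to prove the expansion via a weight-preserving bijection between $\SSIT(\alpha)$ and pairs $(S, (i_1, \ldots, i_n))$ where $S \in \SIT(\alpha)$ and $1 \le i_1 \le \cdots \le i_n$ with $i_k < i_{k+1}$ whenever $k \in \des_{\SIT}(S)$. Expanding each fundamental in monomials gives
\[
F_{\des_{\SIT}(S)} \;=\; \sum_{\substack{1 \le i_1 \le \cdots \le i_n \\ i_k < i_{k+1} \text{ for } k \in \des_{\SIT}(S)}} x_{i_1} x_{i_2} \cdots x_{i_n},
\]
so the claim will follow by comparison with Theorem~\ref{thm:ditomonomial}, since the associated monomial $x_{i_1}\cdots x_{i_n}$ agrees with $x^{\wt(T)}$ under the bijection.

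First I would construct the forward map: given a compatible pair $(S, (i_1, \ldots, i_n))$, produce $T$ by placing $i_k$ in the cell of $D(\alpha)$ holding the label $k$ in $S$. Rows of $T$ are weakly increasing because row entries of $S$ are labeled in strictly increasing order left to right and the $i_k$ are weakly increasing. The key verification is that the first column of $T$ is strictly increasing: writing $c_1 < c_2 < \cdots < c_\ell$ for the labels in the first column of $S$ read bottom to top, I would argue that $c_{j+1} - 1$ is always a descent of $S$, because the cell carrying label $c_{j+1} - 1$ cannot lie in the row of $c_{j+1}$ (where $c_{j+1}$ is leftmost, so all labels there are at least $c_{j+1}$), nor in a row above $c_{j+1}$ (whose leftmost column entries strictly exceed $c_{j+1}$), so it must lie in a strictly lower row. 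Compatibility then yields $i_{c_j} \le i_{c_{j+1}-1} < i_{c_{j+1}}$, as required.

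Next I would build the inverse via standardization: given $T \in \SSIT(\alpha)$, let $(i_1, \ldots, i_n)$ be the weakly increasing list of entries of $T$, and define $S$ by assigning labels $1, \ldots, n$ to the cells of $T$ in increasing order of entry, breaking ties by labeling cells in strictly higher rows first and within each row from left to right. Rows of $S$ are strictly increasing (smaller $T$-entries sit further left within a row, and within the same value we label left to right), and the first column of $S$ is strictly increasing because the first column of $T$ already is (entries in column 1 of an $\SSIT$ are distinct). To verify the maps are mutually inverse, the crucial point is that if consecutive labels $k, k{+}1$ in $S$ satisfy $i_k = i_{k+1}$, then compatibility forbids $k$ from being a descent, so cell $k{+}1$ lies in the same row to the right of $k$ or in a strictly lower row---precisely the ordering produced by the tie-breaking rule.

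The main obstacle is the first-column verification in the forward direction, because the strict increase required in column 1 of $T$ is not locally available from the compatibility condition: $c_j$ and $c_{j+1}$ need not be consecutive integers, so one must produce a descent of $S$ somewhere in the interval $[c_j, c_{j+1})$, for which the structural argument above is needed. The remaining steps are bookkeeping confirming that the two maps are mutually inverse and monomial-preserving.
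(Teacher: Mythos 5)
Your proof is correct: the standardization/destandardization bijection between $\SSIT(\alpha)$ and descent-compatible pairs $(S,(i_1,\ldots,i_n))$ is the standard argument, and your treatment of the only delicate point --- the strict increase of the first column of $T$ in the forward direction, via the observation that the cell labelled $c_{j+1}-1$ must lie strictly below the row of $c_{j+1}$ and hence $c_{j+1}-1\in\des_{\SIT}(S)$ --- is sound. Note that the paper gives no proof of this proposition (it is quoted from \cite{BBSSZ14}), so there is nothing internal to compare against; your argument is essentially the one used in that reference.
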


\begin{ex}\label{ex:di23}
By Example~\ref{ex:SIT23}, we have
\[\di_{(2,3)} = F_{(2,3)} + F_{(1,2,2)} + F_{(1,3,1)} + F_{(1,4)}.\]
\end{ex}

In \cite{AHM18} it is shown that dual immaculate quasisymmetric functions also expand positively in the \emph{Young quasisymmetric Schur functions} \cite{LMvWbook}, another quasisymmetric analogue of the Schur basis.  The Young quasisymmetric Schur functions can be defined combinatorially as generating functions for certain tableaux of composition shape. 
 
\begin{definition}\label{def:YCT}
Let $\alpha$ be a composition of $n$. A \emph{Young composition tableau} of shape $\alpha$ is a filling of the boxes of the diagram of $\alpha$ with positive integers so that:
\begin{itemize}
\item the leftmost column entries strictly increase from bottom to top;
\item the row entries weakly increase from left to right;
\item the entries satisfy the following \emph{Young triple rule}: For any three boxes where $b$ and $c$ are adjacent and $a$ is in the same column as $c$ but in a lower row, so that the configuration of the entries is $\tableau{b & c \\ \\ & a}$, if $a \ge b$ then $a>c$. 
\end{itemize}
A Young composition tableau is \emph{standard} if its entries are $1, \ldots , n$, each used once. Let $\YCT(\alpha)$ (respectively, $\SYCT(\alpha)$) denote the set of all Young composition tableaux (respectively, standard Young composition tableaux) of shape $\alpha$.
\end{definition}

The \emph{Young quasisymmetric Schur function} indexed by a composition $\alpha$ is given by the formula
\[\yqs_{\alpha} = \sum_{T \in \YCT(\alpha)} x^{\wt(T)}.\] 
 
The Young quasisymmetric Schur functions were introduced as a variation on the \emph{quasisymmetric Schur} basis of~\cite{HLMvW11a}.  The quasisymmetric Schur function $\qs_{\rev(\alpha)}$ in $\ell$ variables (where $\rev(\alpha)$ is the composition obtained by reversing the order of the parts of $\alpha$) can be obtained from $\yqs_{\alpha}(x_1, \ldots , x_\ell)$ by replacing each variable $x_i$ with $x_{\ell-i+1}$.  
This reversal procedure is part of a broader division of bases for $\QSym$ and the polynomial ring into two different collections. These parallel constructions and their connections will be explored in greater detail in a forthcoming paper.

As evidence of the contrast between the two seemingly similar constructions, note that dual immaculate quasisymmetric functions expand positively into Young quasisymmetric Schur functions, while their expansion into quasisymmetric Schur functions includes negative coefficients. 

Below we state a positive formula for the expansion in the Young quasisymmetric Schur basis, given by \cite{AHM18} in terms of tableau fillings called \emph{dual immaculate recording tableaux}, or DIRTs. The relevant definitions of DIRTs, row strips, and row strip shape can be found in \cite{AHM18}. Alternatively, in Definition~\ref{def:DIRF} we generalise all of these concepts to \emph{dual immaculate recording fillings} (DIRFs), which are defined for diagrams of weak compositions; DIRTs may be realized as special cases of DIRFs.

\begin{theorem}{\cite[Theorem 1.1]{AHM18}}\label{thm:ditoyqs}
The dual immaculate quasisymmetric functions decompose into Young quasisymmetric Schur functions in the following way:

\[\di_{\alpha} = \sum_{\beta} c_{\alpha,\beta} \yqs_{\beta}\]

where $c_{\alpha,\beta}$ is the number of DIRTs of shape $\beta$ with row strip shape $\rev(\alpha)$.  
\end{theorem}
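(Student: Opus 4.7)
The plan is to establish the identity via a weight-preserving bijection between $\SSIT(\alpha)$ and the disjoint union, over all compositions $\beta$, of pairs $(P,R)$ where $P \in \YCT(\beta)$ and $R$ is a DIRT of shape $\beta$ with row strip shape $\rev(\alpha)$. Combined with Theorem~\ref{thm:ditomonomial}, which states $\di_\alpha = \sum_{T\in\SSIT(\alpha)} x^{\wt(T)}$, and the monomial expansion $\yqs_\beta = \sum_{P\in\YCT(\beta)} x^{\wt(P)}$, grouping terms on the pair side first by $\beta$ and then by $R$ yields the claimed identity with $c_{\alpha,\beta}$ equal to the number of admissible recording tableaux of shape $\beta$.

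To define the bijection, I would adapt an RSK-style insertion algorithm for Young composition tableaux, analogous to the insertion used to prove Pieri-like rules for $\yqs_\alpha$. Given $T \in \SSIT(\alpha)$, read its rows in a fixed order, starting from the top row of $T$ (French notation, corresponding to row index $\ell(\alpha)$) and proceeding downward to row $1$; within each row, process entries left to right. Insert each entry in turn into an initially empty YCT $P$, using a bumping rule that preserves the left-column strict increase, row weak increase, and Young triple rule. Each step adds exactly one new cell; in the simultaneously constructed recording tableau $R$, label that cell with the index of the row of $\alpha$ currently being processed. Because the rows of $\alpha$ are processed in the order $\ell(\alpha), \ldots, 2, 1$, the cells of $R$ sharing a common label $i$ form a horizontal-type configuration containing exactly $\alpha_i$ cells, so the total shape of $R$, read in label order, is precisely $\rev(\alpha)$ as a row strip shape.

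The verification then breaks into four parts: (a) at every step of insertion, $P$ remains a valid YCT; (b) $\shape(P) = \shape(R)$ throughout; (c) $R$ satisfies all DIRT conditions of \cite{AHM18}, in particular that the cells bearing a common label form a valid row strip and that the labels across strips interact in the way DIRTs require, which will follow from the left-column-strictness of $T$ and the top-down processing order; (d) the map is invertible, via reverse bumping guided by $R$: locate the cell of $R$ carrying the largest label, uninsert to obtain a letter which becomes the next entry of the current row of $T$, and iterate. Weight preservation is automatic, since each entry $k$ of $T$ is inserted exactly once and contributes $1$ to the $k$-th part of $\wt(P)$.

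The main obstacle will be verifying that the YCT bumping procedure interacts correctly with the reverse row-processing order so that the Young triple rule is preserved throughout insertion, that cells with a common recording label form the precise row strip configuration demanded by DIRTs, and that the left-column-strictness of $T$ translates to the global DIRT inter-strip conditions on $R$. The appearance of $\rev(\alpha)$ rather than $\alpha$ in the statement is a bookkeeping consequence of this top-down processing convention, and some care is needed to confirm that this is the ordering under which the row strip shape definition of \cite{AHM18} matches the shape produced by the algorithm; once this is checked on small cases, the general argument proceeds uniformly.
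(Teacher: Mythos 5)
Your plan is genuinely different from the paper's route, and as written it has real gaps. The paper does not prove this statement by a weight-preserving bijection on semistandard objects at all: it deduces it from the polynomial-level expansion of Theorem~\ref{thm:distoyqk} via Proposition~\ref{prop:ditordi}, Corollary~\ref{rdistoqk} and the stable limit in Corollary~\ref{cor:rditoqs}, and Theorem~\ref{thm:distoyqk} itself is proved by a $\wdes$-preserving bijection on \emph{standard} fillings ($\SIF(a)\to Y(a)$), with the passage to generating functions made through the fundamental slide expansions of both sides (Theorems~\ref{thm:ydistoyslide} and~\ref{thm:yqktoyslide}). That detour through standard objects and descent data is not cosmetic: it is precisely how one avoids having to control an insertion algorithm on words with repeated letters.

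Your proposal leaves exactly that hard part open. First, the bumping rule is never specified; the insertion analyzed in \cite{AHM18} and extended in Section~\ref{sec:distoyqk} is only ever applied to standard reading words, and showing that a semistandard version preserves the Young triple rule of Definition~\ref{def:YCT} when equal entries are bumped is the entire technical content of the claim, not a verification to be deferred. Second, your recording object is not a DIRT: you label each new cell with the index of the row of $\alpha$ currently being processed, which produces repeated labels, whereas a DIRT is a filling with $1,\ldots,n$ each used exactly once, and its row strips are read off from consecutive integers. You would need to record the insertion step $k$ instead and then prove conditions (DIRF1)--(DIRF4) for that standard filling; nothing in the proposal addresses this. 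Third, invertibility of a semistandard reverse bumping (with ties) is asserted but not argued. A direct weight-preserving bijection of the kind you describe must exist abstractly, since both sides have the stated monomial expansions, but the proposal does not supply the algorithm or the verifications that would constitute a proof; as it stands it is a restatement of the theorem as a bijection to be constructed.
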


\section{Reverse dual immaculate slide polynomials}{\label{sec:revdis}}

We begin by defining a `reverse' version, $\rdi$, of the dual immaculate quasisymmetric functions, analogous to the distinction between Young quasisymmetric Schur and quasisymmetric Schur functions.  
We then introduce the \emph{reverse dual immaculate slide polynomials}, $\rdis$, our first lifting of the dual immaculate quasisymmetric functions to the polynomial ring, and prove they form a basis for polynomials. Finally, we expand the notion of a \emph{weak descent composition} from \cite{Assaf:nonsymmetric} to our setting and prove several properties which will be useful later. For our current purposes, it is more natural to work with the reverse versions rather than the Young versions. In particular, many of the combinatorial bases for the ring of polynomials are naturally defined in terms of reverse fillings, i.e., where entries decrease along rows.

\subsection{Reverse immaculate tableaux}

We now introduce the objects, reverse immaculate tableaux, needed to define the reverse dual immaculate quasisymmetric functions.

\begin{definition}\label{def:SSRIT}
Let $\alpha$ be a composition of $n$. A \emph{semistandard reverse immaculate tableau} of shape $\alpha$ is a filling of the boxes of the diagram of $\alpha$ with positive integers so that:
\begin{itemize}
\item the sequence of entries in each row is weakly \emph{decreasing} from left to right;
\item the sequence of entries in the leftmost column is strictly increasing from bottom to top.
\end{itemize}
A semistandard reverse immaculate tableau of shape $\alpha$ is said to be \emph{standard} if its entries are $1,2,\ldots , n$ each used exactly once. Let $\SSRIT(\alpha)$ (respectively, $\SRIT(\alpha)$) denote the set of semistandard (respectively, standard) reverse immaculate tableaux of shape $\alpha$. 
\end{definition}

As for standard immaculate tableaux, there is a notion of descent sets for standard reverse immaculate tableaux.  Throughout the paper, whenever we refer to the \emph{descent composition}, we construct it from the descent set via the standard correspondence between subsets of $[ n-1 ]$ and compositions of $n$ as described in the following definition.

\begin{definition}{\label{def:desSRIT}}
The descent set, $\des_{\SRIT}(S)$, of a standard reverse immaculate tableau $S$ is the set of all $i$ such that $i+1$ is in a row strictly higher than $i$ in $S$. If $i \in \des_{\SRIT}(S)$, then $i$ is said to be a \emph{descent} of $S$.  The \emph{descent composition}  is the composition corresponding to the set of descents of $S$, i.e., if the descents of $S$ are $i_1 < i_2, \hdots < i_j$ then the corresponding descent composition is $(i_1, i_2-i_1, i_3-i_2, \hdots, n-i_j)$.  
\end{definition}

We note the definition of descent for standard reverse immaculate tableaux is identical to that for standard immaculate tableaux.

\begin{ex}\label{ex:SRIT32}
Let $\alpha=(3,2)$. The four $\SRIT$s of shape $\alpha$, along with their descent compositions, are shown in Figure~\ref{fig:SRIT32} below. 
\end{ex}

\begin{figure}[ht]
  \begin{center}
    \begin{displaymath}
      \begin{array}{c@{\hskip2\cellsize}c@{\hskip2\cellsize}c@{\hskip2\cellsize}c@{\hskip2\cellsize}c@{\hskip2\cellsize}c}
        \vline\tableau{  5 & 4  \\ 3 & 2 & 1 } &
        \vline\tableau{  5 & 3  \\ 4 & 2 & 1 } &
        \vline\tableau{  5 & 2  \\ 4 & 3 & 1 } &
        \vline\tableau{  5 & 1  \\ 4 & 3 & 2 } \\ \\
        (3,2) & (2,2,1) & (1,3,1) & (4,1)
      \end{array}
    \end{displaymath}
  \end{center}
  \caption{The 4 $\SRIT$s for $\alpha=(3,2)$ and their descent compositions.}\label{fig:SRIT32}
  \end{figure}

We define the reverse dual immaculate quasisymmetric functions $\rdi_{\alpha}$ to be the sum of the fundamental quasisymmetric functions indexed by the descent compositions of the elements of $\SRIT(\alpha)$, i.e., 
\begin{equation}\label{eq:direv}
\rdi_{\alpha} = \sum_{S\in \SRIT(\alpha)}F_{\des_{\SRIT}(S)}.
\end{equation}

\begin{ex}\label{ex:direv32}
By Example~\ref{ex:SRIT32}, depicted in Figure~\ref{fig:SRIT32}, we have
\[\rdi_{(3,2)} = F_{(3,2)} + F_{(2,2,1)} + F_{(1,3,1)} + F_{(4,1)}.\]
\end{ex}

The dual immaculate and reverse dual immaculate quasisymmetric functions are related via a reversal of the variable set and the composition.

\begin{lemma}\label{lem:SITSRITbij}
For any composition $\alpha$, there is a descent composition reversing bijection $\theta$ between $\SRIT(\rev(\alpha))$ and $\SIT(\alpha)$. 
\end{lemma}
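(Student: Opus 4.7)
The plan is to define the bijection $\theta$ explicitly by composing two simple involutions: vertical reflection of the diagram and complementation of the entries. Given $S \in \SRIT(\rev(\alpha))$ with $|\alpha|=n$, let $\theta(S)$ be the filling obtained by (i) reversing the order of the rows so that the row that was lowest in $S$ becomes the highest, producing a diagram of shape $\alpha$, and then (ii) replacing each entry $i$ by $n+1-i$.

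First I would verify that $\theta(S)$ lies in $\SIT(\alpha)$. The shape is $\alpha$ by step (i). In $S$ each row is weakly decreasing with distinct entries, hence strictly decreasing; the complementation in (ii) turns each row into a strictly increasing sequence, matching Definition~\ref{def:SSIT}. For the first-column condition, the leftmost column of $S$ is strictly increasing from bottom to top; vertical reflection makes it strictly decreasing from bottom to top, and complementation then restores strict increase from bottom to top. Since the entries of $S$ are $\{1,\dots,n\}$ each used once, so are those of $\theta(S)$. Applying the same construction to $\SIT(\alpha)$ produces a well-defined map back to $\SRIT(\rev(\alpha))$, and since both steps (i) and (ii) are manifestly involutive, $\theta$ is a bijection.

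Next I would track the descent statistic. Let $r_S(k)$ denote the row (counted from the bottom) of entry $k$ in $S$, and write $\ell$ for the number of rows of $\rev(\alpha)$ (equivalently, of $\alpha$). The transformation (i) sends row $r$ to row $\ell+1-r$, and (ii) relabels entry $k$ to $n+1-k$. Hence the position of entry $m$ in $\theta(S)$ is the vertically reflected position of entry $n+1-m$ in $S$, so $r_{\theta(S)}(m)=\ell+1-r_S(n+1-m)$. Setting $m=n-i$ and $m+1=n+1-i$, we see
\[
r_{\theta(S)}(n-i+1)-r_{\theta(S)}(n-i) \;=\; r_S(i)-r_S(i+1),
\]
so $i$ is a descent of $S$ if and only if $n-i$ is a descent of $\theta(S)$. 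Thus $\des(\theta(S))=\{n-i : i\in\des(S)\}$, and a direct computation from the descent-composition recipe in Definition~\ref{def:desSRIT} yields that the descent composition of $\theta(S)$ is $\rev$ of that of $S$.

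The verification that $\theta(S)\in\SIT(\alpha)$ is essentially bookkeeping; the one place where one must be attentive is the descent comparison, since vertical reflection and complementation interact with the row-index of entries in opposite ways. Keeping the relation $r_{\theta(S)}(n+1-k)=\ell+1-r_S(k)$ in mind makes the descent-reversal step transparent and completes the proof.
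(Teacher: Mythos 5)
Your construction is exactly the paper's map $\theta$ (complement each entry $i$ to $n+1-i$ and reverse the rows), and your membership check and descent analysis are the same argument the paper gives, just carried out in more detail. One small sign slip: from your own relation $r_{\theta(S)}(m)=\ell+1-r_S(n+1-m)$ the displayed difference should be $r_{\theta(S)}(n-i+1)-r_{\theta(S)}(n-i)=r_S(i+1)-r_S(i)$, not $r_S(i)-r_S(i+1)$; with that corrected, the equivalence that $i$ is a descent of $S$ if and only if $n-i$ is a descent of $\theta(S)$ follows exactly as you state, and the descent-composition reversal is immediate.
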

\begin{proof}
Let $\alpha$ be a composition of $n$ and $S\in \SRIT(\rev(\alpha))$. Let $\theta(S)\in \SIT(\alpha)$ be given by exchanging each entry $i$ in $S$ with $n+1-i$ and then reversing the order of the rows of $S$. This map $\theta$ is clearly an involution from $\SRIT(\rev(\alpha))$ to $\SIT(\alpha)$, and hence sends each element of $\SRIT(\rev(\alpha))$ to a unique element of $\SIT(\alpha)$. It is straightforward to see that $i+1$ is strictly above $i$ in $S$ if and only if $n-i+1$ is strictly above $n-i$ in $\theta(S)$, hence $\theta$ reverses the descent composition. 
\end{proof}

Note that, for example, each $\SRIT$ in Figure~\ref{fig:SRIT32} is sent under $\theta$ to the corresponding $\SIT$ in Figure~\ref{fig:SIT23}.  Now compare the fundamental expansion of $\rdi_{(3,2)}$ in Example~\ref{ex:direv32} and the fundamental expansion of $\di_{(2,3)}$ in Example~\ref{ex:di23}. 

We therefore obtain the following proposition.
\begin{proposition}\label{prop:ditordi}
For any composition $\alpha$ and any positive integer $\ell$, we have \[\di_{\alpha}(x_1, \ldots , x_\ell) = \rdi_{\rev(\alpha)}(x_\ell , \ldots , x_1).\]
\end{proposition}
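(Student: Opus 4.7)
The plan is to combine the fundamental expansion of $\di_\alpha$ from Proposition~\ref{prop:ditof}, the definition~\eqref{eq:direv} of $\rdi_{\rev(\alpha)}$, the bijection $\theta$ of Lemma~\ref{lem:SITSRITbij}, and a standard symmetry of the fundamental quasisymmetric functions under simultaneous reversal of variables and index composition. Concretely, I would argue as follows.

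First, applying the definition~\eqref{eq:direv} to $\rev(\alpha)$ and then substituting $x_i \mapsto x_{\ell-i+1}$ gives
\[
\rdi_{\rev(\alpha)}(x_\ell,\ldots,x_1) \;=\; \sum_{S \in \SRIT(\rev(\alpha))} F_{\des_{\SRIT}(S)}(x_\ell,\ldots,x_1).
\]
By Lemma~\ref{lem:SITSRITbij}, the map $\theta$ gives a bijection between $\SRIT(\rev(\alpha))$ and $\SIT(\alpha)$ with the property that $\des_{\SIT}(\theta(S)) = \rev(\des_{\SRIT}(S))$, so re-indexing the sum by $T = \theta(S)$ turns it into
\[
\sum_{T \in \SIT(\alpha)} F_{\rev(\des_{\SIT}(T))}(x_\ell,\ldots,x_1).
\]

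The second step is to establish the identity $F_\beta(x_\ell,\ldots,x_1) = F_{\rev(\beta)}(x_1,\ldots,x_\ell)$ for any composition $\beta$ of $n$. This follows from the monomial description of $F_\beta$: expanding $F_\beta(x_\ell,\ldots,x_1)$ as a sum over weakly increasing sequences $i_1 \le \cdots \le i_n$ with strict inequality at positions in the descent set of $\beta$, and substituting $j_k := \ell - i_{n-k+1}+1$, one obtains a sum over weakly increasing sequences $j_1 \le \cdots \le j_n$ with strict inequalities precisely at positions in the descent set of $\rev(\beta)$, yielding $F_{\rev(\beta)}(x_1,\ldots,x_\ell)$. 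Applying this identity with $\beta = \rev(\des_{\SIT}(T))$ turns the sum above into
\[
\sum_{T \in \SIT(\alpha)} F_{\des_{\SIT}(T)}(x_1,\ldots,x_\ell),
\]
which by Proposition~\ref{prop:ditof} equals $\di_\alpha(x_1,\ldots,x_\ell)$, completing the proof.

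The main obstacle, such as it is, lies in the second step: carefully verifying the substitution that shows $F_\beta(x_\ell,\ldots,x_1) = F_{\rev(\beta)}(x_1,\ldots,x_\ell)$. This is a routine but slightly fiddly computation because one has to track how the descent set of $\beta$ (a subset of $[n-1]$) transforms under the reindexing $k \mapsto n-k+1$. Everything else is a direct assembly of the previously established bijection and expansion formulas.
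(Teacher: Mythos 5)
Your proposal is correct and follows essentially the same route as the paper: expand both sides in the fundamental basis via Proposition~\ref{prop:ditof} and the definition (\ref{eq:direv}), match terms using the descent-composition-reversing bijection $\theta$ of Lemma~\ref{lem:SITSRITbij}, and invoke the identity $F_{\beta}(x_1,\ldots,x_\ell)=F_{\rev(\beta)}(x_\ell,\ldots,x_1)$, which the paper cites as a known fact and you verify directly (correctly) via the reindexing $j_k=\ell-i_{n-k+1}+1$.
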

\begin{proof}
The left-hand side is equal to $\sum_{S\in \SIT(\alpha)}F_{\des_{\SIT}(S)}(x_1,\ldots , x_\ell)$ by Proposition~\ref{prop:ditof}, while the right-hand side is equal to $\sum_{S\in \SRIT(\rev(\alpha))}F_{\des_{\SRIT}(S)}(x_\ell, \ldots , x_1)$ by definition.  The proposition then follows from Lemma~\ref{lem:SITSRITbij} and the fact that $F_{\beta}(x_1, \ldots, x_\ell) = F_{\rev(\beta)}(x_\ell, \ldots , x_1)$ for any positive integer $\ell$ and any composition $\beta$.
\end{proof}

We now define a polynomial-ring analogue of the reverse dual immaculate quasiymmetric functions. The \emph{reverse} of a weak composition $a$, denoted $\rev(a)$, is the weak composition of the same length as $a$ formed by writing the parts (including $0$ parts) of $a$ in reverse order, e.g. $\rev(0,3,0,2) = (2,0,3,0)$. Given a weak composition $a$ of $n$ of length $\ell$, the \emph{diagram} of $a$, denoted $D(a)$, consists of $\ell$ left-justified rows such that the $i^{th}$ row from the bottom contains $a_i$ boxes.  

\begin{definition}{\label{def:SSRIF}}
Let $a$ be a weak composition of $n$. We define a \emph{semistandard reverse immaculate filling} (SSRIF) of shape $a$ to be a filling of the boxes in the diagram of $a$ with positive integers satisfying the following properties.
\begin{enumerate}[leftmargin=1.8cm] 
\item[(SSRIF1)] Row entries weakly decrease from left to right.
\item[(SSRIF2)] Entries in the leftmost column strictly increase from bottom to top.
\item[(SSRIF3)] Entries in the $i^{th}$ row from the bottom do not exceed $i$.
\end{enumerate}
A \emph{standard reverse immaculate filling} of shape $a$ is a filling of $D(a)$ with $1,\ldots , n$ (each used once) that satisfies (SSRIF1) and (SSRIF2), but not necessarily (SSRIF3). Denote the set of semistandard (respectively, standard) reverse immaculate fillings of shape $a$ by $\SSRIF(a)$ (respectively, $\SRIF(a)$). 
\end{definition}

\begin{remark}
Note that $\SRIF(a)$ is not in general a subset of $\SSRIF(a)$, since elements of $\SRIF(a)$ are not required to satisfy (SSRIF3). This situation, in which standard objects do not need to satisfy a row index condition imposed on semistandard objects, will be a common thread in this paper. We note this is unlike the situation typically encountered in the literature, in which standard objects are a subset of semistandard objects; for example, $\SIT(\alpha)$ is a subset of $\SSIT(\alpha)$ (Definition~\ref{def:SSIT}). 
\end{remark}

\begin{ex}
Figure~\ref{fig:SSRIF} depicts the $30$ $\SSRIF$s of shape $a=(0,3,0,2)$. 
\end{ex}

\begin{figure}[ht]
  \begin{center}
    \begin{displaymath}
      \begin{array}{c@{\hskip2\cellsize}c@{\hskip2\cellsize}c@{\hskip2\cellsize}c@{\hskip2\cellsize}c@{\hskip2\cellsize}c}
        \vline\tableau{  4 & 4  \\ \\  2 & 2 & 2 \\ \\\hline } &
        \vline\tableau{  4 & 4  \\ \\  2 & 2 & 1 \\ \\\hline } &
        \vline\tableau{  4 & 4  \\ \\  2 & 1 & 1 \\ \\\hline } &
        \vline\tableau{  4 & 4  \\ \\  1 & 1 & 1 \\ \\\hline } &
        \vline\tableau{  4 & 3  \\ \\  2 & 2 & 2 \\ \\\hline } &
        \vline\tableau{  4 & 3  \\ \\  2 & 2 & 1 \\ \\\hline } \\ \\
        \vline\tableau{  4 & 3  \\ \\  2 & 1 & 1 \\ \\\hline } &
        \vline\tableau{  4 & 3  \\ \\  1 & 1 & 1 \\ \\\hline } &
        \vline\tableau{  4 & 2  \\ \\  2 & 2 & 2 \\ \\\hline } &
        \vline\tableau{  4 & 2  \\ \\  2 & 2 & 1 \\ \\\hline } &
        \vline\tableau{  4 & 2  \\ \\  2 & 1 & 1 \\ \\\hline } &
        \vline\tableau{  4 & 2  \\ \\  1 & 1 & 1 \\ \\\hline } \\ \\
        \vline\tableau{  4 & 1  \\ \\  2 & 2 & 2 \\ \\\hline } &
        \vline\tableau{  4 & 1  \\ \\  2 & 2 & 1 \\ \\\hline } &
        \vline\tableau{  4 & 1  \\ \\  2 & 1 & 1 \\ \\\hline } &
        \vline\tableau{  4 & 1  \\ \\  1 & 1 & 1 \\ \\\hline } &
        \vline\tableau{  3 & 3  \\ \\  2 & 2 & 2 \\ \\\hline } &
        \vline\tableau{  3 & 3  \\ \\  2 & 2 & 1 \\ \\\hline } \\ \\
        \vline\tableau{  3 & 3  \\ \\  2 & 1 & 1 \\ \\\hline } &
        \vline\tableau{  3 & 3  \\ \\  1 & 1 & 1 \\ \\\hline } &
        \vline\tableau{  3 & 2  \\ \\  2 & 2 & 2 \\ \\\hline } &
        \vline\tableau{  3 & 2  \\ \\  2 & 2 & 1 \\ \\\hline } &
        \vline\tableau{  3 & 2  \\ \\  2 & 1 & 1 \\ \\\hline } &
        \vline\tableau{  3 & 2  \\ \\  1 & 1 & 1 \\ \\\hline } \\ \\
        \vline\tableau{  3 & 1  \\ \\  2 & 2 & 2 \\ \\\hline } &
        \vline\tableau{  3 & 1  \\ \\  2 & 2 & 1 \\ \\\hline } &
        \vline\tableau{  3 & 1  \\ \\  2 & 1 & 1 \\ \\\hline } &
        \vline\tableau{  3 & 1  \\ \\  1 & 1 & 1 \\ \\\hline } &        
        \vline\tableau{  2 & 2  \\ \\  1 & 1 & 1 \\ \\\hline } &
        \vline\tableau{  2 & 1  \\ \\  1 & 1 & 1 \\ \\\hline } 
      \end{array}
    \end{displaymath}
  \end{center}
  \caption{The 30 $\SSRIF$s for $a=(0,3,0,2)$.}\label{fig:SSRIF}
  \end{figure}

\begin{ex}
Figure~\ref{fig:0302slide} depicts the $4$ $\SRIF$s of shape $a=(0,3,0,2)$.
\end{ex}  
  
\begin{figure}[ht]
  \begin{center}
    \begin{displaymath}
      \begin{array}{c@{\hskip2\cellsize}c@{\hskip2\cellsize}c@{\hskip2\cellsize}c}
        \vline\tableau{  5 & 4  \\ \\  3 & 2 & 1 \\ \\\hline } &
        \vline\tableau{  5 & 3  \\ \\  4 & 2 & 1 \\ \\\hline } &
        \vline\tableau{  5 & 2  \\ \\  4 & 3 & 1 \\ \\\hline } &
        \vline\tableau{  5 & 1  \\ \\  4 & 3 & 2 \\ \\\hline } \\         
             \end{array}
    \end{displaymath}
  \end{center}
  \caption{The 4 $\SRIF$s for $a=(0,3,0,2)$.}\label{fig:0302slide}
  \end{figure}

Given a weak composition $a$, define the \emph{reverse dual immaculate slide polynomial} $\rdis_{a}$ by $$\rdis_a = \sum_{T \in \SSRIF(a)} x^{\wt(T)}.$$

\begin{ex}
Let $a=(0,3,0,2)$. Then, as computed by Figure~\ref{fig:SSRIF}, we have 

\begin{align*}\rdis_a & = x^{0302}+x^{1202}+x^{2102}+x^{3002} +x^{0311} +x^{1211}+x^{2111}+x^{3011} +x^{0401} +x^{1301} \\ 
                                & +x^{2201}+x^{3101} +x^{1301} +x^{2201}+x^{3101}+x^{4001} +x^{0320} +x^{1220}+x^{2120}+x^{3020} \\ 
                                & +x^{0410} +x^{1310}+x^{2210}+x^{3110} +x^{1310} +x^{2210}+x^{3110}+x^{4010} +x^{3200} +x^{4100}.
\end{align*}
\end{ex}

\begin{theorem}
The set 
\[\{\rdis_a : a \mbox{ is a weak composition of length } \ell\}\]
of reverse dual immaculate slide polynomials forms a basis for polynomials in $\ell$ variables.
\end{theorem}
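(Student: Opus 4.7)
The plan is to show that the transition matrix from $\{\rdis_a\}$ to the monomial basis $\{x^b\}$ is upper unitriangular with respect to a suitable total order on weak compositions of length $\ell$. Since each $\rdis_a$ is homogeneous of degree $|a|$, it suffices to restrict to each fixed total degree $n$; then both the weak compositions of $n$ of length $\ell$ and the degree-$n$ monomials in $\ell$ variables are enumerated by $\binom{n+\ell-1}{\ell-1}$, so linear independence via unitriangularity automatically yields the basis property.

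First I would identify a distinguished \emph{super-standard} filling $T_a \in \SSRIF(a)$: fill row $i$ of $D(a)$ with $a_i$ copies of $i$. Rows are constant, so (SSRIF1) holds; leftmost entries of nonempty rows are exactly the row indices and strictly increase upward, giving (SSRIF2); and (SSRIF3) is immediate. Since $\wt(T_a) = a$, the monomial $x^a$ appears in $\rdis_a$.

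Next I would establish the key estimate: for any $T \in \SSRIF(a)$ with $b := \wt(T)$, condition (SSRIF3) forces any entry of value at least $k$ to sit in a row of index at least $k$, whence
\[\sum_{i \geq k} b_i \;\leq\; \sum_{i \geq k} a_i \quad \text{for every } k,\]
with equality at $k=1$. Moreover, equality in every inequality forces each entry in row $j$ to equal exactly $j$, so $T = T_a$ is the unique $\SSRIF$ of shape $a$ and weight $a$. Refining this family of inequalities to a total order $\prec$ (for instance reverse lexicographic, declaring $b \prec a$ when $b$ is smaller at the largest index of disagreement) yields
\[\rdis_a \;=\; x^a \;+\; \sum_{b \prec a} c_{a,b}\, x^b\]
with $c_{a,b} \in \mathbb{Z}_{\geq 0}$. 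The transition matrix is therefore unitriangular, hence invertible, and the conclusion follows. The only substantive obstacle is the partial order estimate, which is a direct consequence of (SSRIF3); beyond this, the argument is formal.
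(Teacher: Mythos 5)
Your proposal is correct and is essentially the paper's argument: both rest on the observation that (SSRIF3) forces $x^a$ to be the extreme monomial of $\rdis_a$ with respect to a term order refining the inequalities $\sum_{i\ge k}\wt(T)_i\le\sum_{i\ge k}a_i$, and then conclude by triangularity of the transition matrix to monomials (the paper runs this as a successive-subtraction spanning argument in lexicographic order rather than reverse-lexicographic unitriangularity, but this is only a repackaging). Your write-up is in fact somewhat more explicit than the paper's, which simply asserts the minimality of $x^a$ as ``immediate from (SSRIF3)'' where you prove the dominance inequalities and the uniqueness of the filling $T_a$.
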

\begin{proof}
Since bases for polynomials in $\ell$ variables are indexed by weak compositions of length $\ell$, it is enough to show that reverse dual immaculate slide polynomials are a spanning set. To do this, consider the lexicographic order $>_{{\rm lex}}$ on weak compositions, where $a >_{{\rm lex}} b$ if $a_i>b_i$ for the smallest index $i$ such that $a_i\neq b_i$. It is immediate from (SSRIF3) that $x^a$ is the monomial in $\rdis_a$ whose exponent vector is minimal in $>_{{\rm lex}}$.  

Let $a$ be a weak composition of $n$ of length $\ell$. Then $x^a-\rdis_a$ is a sum of monomials whose exponent vectors are strictly greater in lexicographic order than $a$, and moreover all are weak compositions of $n$, since reverse dual immaculate slide polynomials are homogeneous. Now, supposing the monomial in $x^a-\rdis_a$ with minimal exponent vector is $c_bx^b$, subtract $c_b\rdis_b$ from $x^a-\rdis_a$.  The exponent vector of each monomial in this expression is strictly greater in lexicographic order than $b$, and is a weak composition of $n$. Continue this procedure until it terminates; termination is guaranteed in a finite number of steps since there are only finitely many weak compositions of $n$ of length $\ell$ and the minimal term at each stage is strictly greater in lexicographic order than the minimal term at the previous stage. Hence we obtain a linear combination of $x^a$ and reverse dual immaculate slide polynomials that is equal to zero. It follows that every monomial $x^a$ is a linear combination of reverse dual immaculate slide polynomials.
\end{proof}

The \emph{reading word} of a semistandard or standard reverse immaculate filling is the word obtained by reading the entries along rows from right to left, starting at the top row and proceeding to the bottom row.  This order on the entries is called the \emph{reading order}; see Example~\ref{ex:SSRIF}.  The \emph{descent set} of a standard reverse immaculate filling $S$, denoted by $\des_{\SRIF}(S)$, is the set of all $i$ such that $i+1$ is in a strictly higher row, and the \emph{descent composition} of $S$ is the composition associated to the descent set (as in Definition~\ref{def:desSRIT}).

\begin{remark}
Note that this reading word is subtly different from that of an immaculate tableau (or immaculate filling), which is given by reading the entries along rows from left to right, top to bottom (see, e.g., \cite{AHM18}). The descent set, however, is the same.  
\end{remark}

Let $a$ be a weak composition of $n$. For each $T\in \SSRIF(a)$ there is a unique \emph{standardization} $\std_{\SSRIF}(T)\in \SRIF(a)$ of $T$.  This standardization is obtained by replacing the $i^{th}$ smallest entry by $i$ for all $1 \le i \le n$, where if two entries are identical then the entry appearing first in reading order is considered to be smaller. Since standardization preserves the relative order between entries, it is immediate that (SSRIF1), (SSRIF2) and (SSRIF4) are preserved, hence $\std_{\SSRIF}(T)\in \SRIF(a)$.

\begin{ex}\label{ex:SSRIF}
Let $a=(0,2,0,4,3)$ be a weak composition of $n=9$ of length $\ell=5$. The given filling $T$ is a semistandard reverse immaculate filling of shape $a$ with reading word $2 \; 4 \; 5 \; 1\; 2 \; 3 \; 3 \; 1 \; 2$ and standardization $S=\std_{\SSRIF}(T)$.  

\[T \mbox{ $=$ } \vline\tableau{  5 & 4 & 2 \\ 3 & 3 & 2 & 1 \\ \\  2 & 1 \\ \\\hline } \qquad S=\std_{\SSRIF}(T) \mbox{ $=$ }   \vline\tableau{  9 & 8 & 3 \\ 7 & 6 & 4 & 1 \\ \\  5 & 2 \\ \\\hline}\]
We have $\des_{\SRIF}(S) = \{ 2,5,7 \}$; hence the descent composition of $S$ is $(2,3,2,2)$.
\end{ex}

\subsection{Weak descent compositions} 
A goal is to derive a positive formula for the \emph{fundamental slide} expansion of a reverse dual immaculate slide polynomial, thus giving a polynomial lifting of the formula (\ref{eq:direv}), i.e., of the reverse version of the fundamental expansion of a dual immaculate quasisymmetric function \cite{BBSSZ14} (Proposition~\ref{prop:ditof}). To this end, we will associate a fundamental slide polynomial to each standard reverse immaculate filling. The fundamental slide polynomials, which we will define in Section~\ref{sec:revtofs}, were introduced in \cite{Assaf.Searles:1} as a lifting of the fundamental basis for quasisymmetric functions to a basis for polynomials.  
To associate fundamental slide polynomials to $\SRIF$s, we adapt the notion of a \emph{weak descent composition} from \cite[Definition 2.3]{Assaf:nonsymmetric} to our context. 

\begin{definition}\label{def:wdesrdis}
Let $a$ be a weak composition of $n$ of length $\ell$. The  \emph{weak descent composition} of $S\in \SRIF(a)$, denoted $\wdes_{\SRIF}(S)$, is the weak composition of length $\ell$ obtained as follows. First, decompose the word $n\; n-1 \ldots 2\; 1$ into runs by placing a bar between $i+1$ and $i$ whenever $i+1$ is strictly above $i$ in $S$; i.e., whenever $i\in \des_{\SRIF}(S)$. Suppose there are $k$ runs, i.e., the run decomposition yields $r_k | r_{k-1} | \ldots | r_1$ where each $r_j$ denotes a run. 

Define a strictly decreasing sequence of numbers $p_k, \ldots , p_1$ recursively as follows. Let $p_k$ be the row index of the box containing $n$.  
Now, let $j<k$ and suppose $p_k, \ldots , p_{j+1}$ are known. Define $p_j$ to be the smaller of $p_{j+1}-1$ and the index of the lowest row containing an entry from the $j^{th}$ run $r_j$.

If any $p_j$ is nonpositive, then $\wdes_{\SRIF}(S)=\emptyset$. Otherwise, $\wdes_{\SRIF}(S)$ is the weak composition of length $\ell$ whose $p_j^{th}$ part is the number of entries in the run $r_j$, and whose other parts are all zero. 
\end{definition}

\begin{remark}\label{rmk:wdessemistandard}
Definition~\ref{def:wdesrdis} extends straightforwardly to give runs $r_j$, numbers $p_j$, and thus a weak descent composition for a semistandard reverse immaculate filling, which will be used in the proof of Theorem~\ref{thm:rdistoslide}. 
To find $\wdes_{\SSRIF}(T)$ for $T\in \SSRIF(a)$, write the entries of $T$ in order from largest to smallest, where if two entries are identical the one earlier in reading order is considered to be smaller. This forms a word $w_n \; w_{n-1} \ldots w_2 \; w_1$, where $w_n$ is the largest entry in $T$. Decompose this word into runs $r_j$ by placing a bar between the letters $w_{i+1}$ and $w_i$ whenever $w_{i+1}$ is strictly above $w_i$ in $T$. Then follow the same procedure as for $\wdes_{\SRIF}(S)$ to find the numbers $p_j$, which then define the weak composition $\wdes_{\SSRIF}(T)$.
\end{remark}

\begin{ex}\label{ex:wdes}
We compute $\wdes_{\SRIF}(S)$ for $S$ from Example~\ref{ex:SSRIF}. We have $\des_{\SRIF}(S) = \{2,5,7\}$ giving the run decomposition $98|76|543|21 = r_4|r_3|r_2|r_1$. The box containing $9$ is in row $5$, hence $p_4=5$. Now $p_3$ is the smaller of $p_4-1=4$ and the lowest row index containing $7$ or $6$. Those entries both appear in row $4$, hence $p_3=4$. Next, $p_2$ is the smaller of $p_3-1 = 3$ and the index of the lowest row containing $5, 4$ or $3$, which is row $2$. Hence $p_2=2$. Finally $p_1$ is the smaller of $p_2-1=1$ and the index of the lowest row containing $2$ or $1$, which is row $2$. Hence $p_1=1$. As a result, $\wdes_{\SRIF}(S) = (2,3,0,2,2)$.

We can also find $\wdes_{\SSRIF}(T)$  for $T$ from Example~\ref{ex:SSRIF}. The run decomposition is $54|33|222|11$. Applying the procedure as for $S$ above, we find that  $\wdes_{\SSRIF}(T) = (2,3,0,2,2)$ also.
\end{ex}

\begin{ex}\label{ex:notwdes}
Consider
\[S \mbox{ $=$ } \vline\tableau{  5 & 2 \\ \\  4 & 3 & 1 \\\hline} \in \SRIF(3,0,2).\] 
We have $\des_{\SRIF}(S) = \{1,4\}$, hence we have the run decomposition $5|432|1 = r_3|r_2|r_1$. The box containing $5$ is in row $3$, hence $p_3=3$.
 
Now $p_2$ is the smaller of $p_3-1=2$ and the lowest row index containing $4,3$ or $2$, which is row $1$. Hence $p_2=1$. But since $p_{j+1}>p_j$ for each $j$ by definition, we must have $p_1\le 0$, thus $\wdes_{\SRIF}(S)=\emptyset$. 
\end{ex}

Note that the weak composition $\wdes_{\SRIF}(S)$ is empty precisely when there is a run containing entries in a lower row of $S$ than its index.  That is, $\wdes_{\SRIF}(S) = \emptyset$ if and only if there exists a $j$ such that the $j^{th}$ run includes entries in a row below row $j$.

\begin{lemma}\label{lem:wdespreservedstd}
Standardization preserves weak descent compositions; that is, if $\std_{\SSRIF}(T)=S$ then $\wdes_{\SSRIF}(T)=\wdes_{\SRIF}(S)$. 
\end{lemma}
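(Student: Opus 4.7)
The plan is to exploit the fact that standardization is, by construction, a position‐preserving bijection between the entries of $T$ (ordered from largest to smallest, with ties broken by reading order so that an earlier occurrence is smaller) and the entries $1,2,\ldots,n$ of $S$. More precisely, the definition of $\std_{\SSRIF}$ says that the box of $S$ that receives the label $i$ is exactly the box of $T$ that holds the entry $w_i$ appearing in the word $w_n\,w_{n-1}\cdots w_1$ from Remark~\ref{rmk:wdessemistandard}. So for every $i$, the position (and hence the row index) of $w_i$ in $T$ equals the position of $i$ in $S$.

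From this I would deduce that the run decompositions used to compute $\wdes_{\SSRIF}(T)$ and $\wdes_{\SRIF}(S)$ are identical. Indeed, the bar between $w_{i+1}$ and $w_i$ is placed exactly when $w_{i+1}$ lies strictly above $w_i$ in $T$; by the position correspondence this happens if and only if $i+1$ lies strictly above $i$ in $S$, i.e.\ if and only if the bar between $i+1$ and $i$ is placed in the run decomposition for $S$. Hence the two bar patterns coincide, so both words are partitioned into $k$ runs $r_k\,|\,r_{k-1}\,|\cdots|\,r_1$ with the same index sets in the positions of $T$ and $S$.

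Next I would run the recursive computation of the $p_j$'s in parallel for $T$ and $S$. The value $p_k$ is, in either case, the row index of the box containing the largest element, and since the box of $w_n$ in $T$ is the box of $n$ in $S$, these are equal. For the inductive step, assuming $p_{k},\ldots,p_{j+1}$ agree, $p_j$ is determined by $p_{j+1}-1$ and the minimum row index among the boxes occupied by entries of the $j$\textsuperscript{th} run; since the set of boxes used by the $j$\textsuperscript{th} run is the same in $T$ and $S$, these minima agree, and so $p_j$ is the same in both computations. It follows that either $\wdes_{\SSRIF}(T)$ and $\wdes_{\SRIF}(S)$ are both empty (when some $p_j\le 0$), or they are both the weak composition of length $\ell$ whose $p_j$\textsuperscript{th} part equals $|r_j|$. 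In either case $\wdes_{\SSRIF}(T)=\wdes_{\SRIF}(S)$.

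I do not anticipate a serious obstacle: the whole proof is essentially bookkeeping once one nails down that the tie-breaking convention in $\std_{\SSRIF}$ is compatible with the tie-breaking convention used to produce the word $w_n\cdots w_1$. The only place that needs a little care is verifying this compatibility, in particular that the convention ``earlier in reading order is considered smaller'' in both definitions yields the same matching $w_i\leftrightarrow i$; once this is explicitly checked the rest of the argument is immediate.
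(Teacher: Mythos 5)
Your argument is correct and is essentially the paper's own proof: both rest on the observation that the box of $T$ containing $w_i$ is exactly the box of $S$ containing $i$, so the bar pattern (and hence the runs, the $p_j$'s, and the resulting weak composition) is unchanged by standardization. The tie-breaking conventions in the definition of $\std_{\SSRIF}$ and in Remark~\ref{rmk:wdessemistandard} are indeed the same (``earlier in reading order is smaller''), so the compatibility you flag holds and your extra explicit check of the $p_j$ recursion is just a more detailed write-up of the same bookkeeping.
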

\begin{proof}
Let $w_n, \ldots , w_1$ be the entries of $T$ written in weakly decreasing order (if two entries are indentical, the one earlier in reading order is considered smaller). Recall (Remark~\ref{rmk:wdessemistandard}) that the run decomposition of $T$ is constructed by placing a bar between $w_{i+1}$ and $w_{i}$ whenever $w_{i +1}$ is strictly above $w_{i}$ in $T$. Applying the standardization procedure replaces $w_i$ in $T$ with $i$ for each $1\le i \le n$. This preserves the positions of the bars (in the run decomposition of $S$) and thus the weak descent composition since the position of $w_i$ in $T$ is exactly the position of $i$ in $S$.
\end{proof}

\section{Reverse dual immaculate slide polynomials expand positively into fundamental slide polynomials}{\label{sec:revtofs}

In this section, we prove Theorem~\ref{thm:rdistoslide}, providing a positive formula for the expansion of reverse dual immaculate slide polynomials into the fundamental slide basis of \cite{Assaf.Searles:1}.  

First we recall the fundamental semistandard skyline fillings introduced in~\cite{Sea}. 

\begin{definition}\label{def:FSSF}
Given a weak composition $a$, a \emph{fundamental semistandard skyline filling} of shape $a$ is a filling of the boxes of $D(a)$ with positive integers satisfying the following properties.
\begin{enumerate}
\item[(F1)] Row entries weakly decrease from left to right.
\item[(F2)] If a box $\mathfrak{b}$ is in a strictly higher row than a box $\mathfrak{b'}$, then the entry in $\mathfrak{b}$ is strictly greater than the entry in $\mathfrak{b'}$.
\item[(F3)] Entries in the $i^{th}$ row from the bottom do not exceed $i$. 
\end{enumerate}
Let $\FSSF(a)$ denote the set of all fundamental semistandard skyline fillings of shape $a$.
\end{definition}
\begin{ex}\label{ex:fs021}
Figure~\ref{fig:fs021} depicts the $\FSSF$s of shape $a=(0,2,1)$ 
\end{ex}

\begin{figure}[ht]
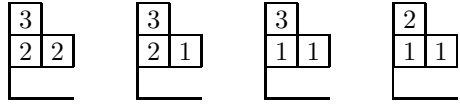

  \begin{center}
    \begin{displaymath}
      \begin{array}{c@{\hskip2\cellsize}c@{\hskip2\cellsize}c@{\hskip2\cellsize}c}
        \vline\tableau{   3 \\ 2 & 2 \\ \\\hline } &
        \vline\tableau{   3 \\ 2 & 1 \\ \\\hline } &
        \vline\tableau{   3 \\ 1 & 1 \\ \\\hline } &
        \vline\tableau{   2 \\ 1 & 1 \\ \\\hline } \\
             \end{array}
    \end{displaymath}
  \end{center}
  \caption{The 4 $\FSSF$s for $a=(0,2,1)$.}\label{fig:fs021}
\end{figure}

We may take the following result as definitional for the fundamental slide polynomials.

\begin{proposition}\cite{Sea}
Let $a$ be a weak composition. Then the fundamental slide polynomial $\fs_a$ is the generating function for the fundamental semistandard skyline fillings of shape $a$, that is,
\[\fs_a=\sum_{K\in \FSSF(a)}x^{\wt(K)}.\]
\end{proposition}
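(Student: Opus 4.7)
The plan is to establish the identity by exhibiting a weight-preserving bijection between $\FSSF(a)$ and the set of weak compositions $b$ indexing the monomials of $\fs_a$ in its original formulation from \cite{Assaf.Searles:1}. Recall that $\fs_a = \sum_{b} x^b$, where the sum ranges over weak compositions $b$ having the same length as $a$ that satisfy the refinement condition $\flatten(b) \preceq \flatten(a)$ and the partial sum inequality $\sum_{j \le i} b_j \ge \sum_{j \le i} a_j$ for every index $i$. Since the monomial contributed by $K \in \FSSF(a)$ is $x^{\wt(K)}$, it suffices to construct a bijection $K \leftrightarrow b$ where $b = \wt(K)$.

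For the forward direction, fix $K \in \FSSF(a)$ and set $b = \wt(K)$. Condition (F2) forces all boxes with entry $i$ in $K$ to lie in a single row, and moreover if $i < i'$ then the row containing the $i$'s lies weakly below the row containing the $i'$'s. Consequently the nonzero parts of $b$ split naturally into consecutive blocks, one per nonempty row of $K$, with the block associated to row $r$ summing to $a_r$. Reading these blocks from bottom to top yields $\flatten(b) \preceq \flatten(a)$. Moreover, (F3) implies that every box in rows $1, \ldots, i$ of $K$ contains a value at most $i$, which gives the partial sum inequality $\sum_{j \le i} b_j \ge \sum_{j \le i} a_j$.

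For the inverse direction, given a weak composition $b$ satisfying these two conditions, the refinement $\flatten(b) \preceq \flatten(a)$ prescribes a unique decomposition of the nonzero parts of $b$ into consecutive blocks summing to the nonzero parts of $a$. Assign the $k$-th block to the $k$-th nonempty row of $a$: if this block records multiplicities $b_{i_1}, \ldots, b_{i_t}$ with $i_1 < \cdots < i_t$, fill that row with $b_{i_t}$ copies of $i_t$, then $b_{i_{t-1}}$ copies of $i_{t-1}$, and so on, producing a weakly decreasing row of length $a_r$. Condition (F1) is immediate, and (F2) holds because the indices appearing in block $k$ are all strictly less than those in block $k+1$.

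The main obstacle is verifying condition (F3): one must show that the largest value $i_t$ assigned to a nonempty row $r$ satisfies $i_t \le r$. I would argue by induction on the nonempty rows. By construction, the boxes in rows $1, \ldots, r$ of $a$ absorb exactly the $\sum_{j \le r} a_j$ smallest entries of $b$ (counted with multiplicity). The partial sum hypothesis $\sum_{j \le r} b_j \ge \sum_{j \le r} a_j$ guarantees that these entries all have value at most $r$, yielding (F3). Checking that the two constructions are mutually inverse is then routine, since (F1) and (F2) force the entries of each row to be arranged in weakly decreasing order and force the distribution of values across the rows to be the unique one compatible with the refinement.
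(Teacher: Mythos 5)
Your proposal is correct, but there is nothing in the paper to compare it against: the paper explicitly declines to prove this statement, citing \cite{Sea} and remarking that the identity ``may be taken as definitional'' for $\fs_a$. What you have written is a genuine proof of the equivalence between the $\FSSF$ model and the original definition of $\fs_a$ from \cite{Assaf.Searles:1} (the sum of $x^b$ over weak compositions $b$ of the same length as $a$ with $\flatten(b)\preceq\flatten(a)$ and $\sum_{j\le i}b_j\ge\sum_{j\le i}a_j$ for all $i$), which is essentially the content of the cited result in \cite{Sea}. The argument is sound at every step: (F2) does force each value to occupy a single row and forces the block decomposition of $\wt(K)$ refining $\flatten(a)$; (F3) does give the dominance inequality; and in the inverse direction the refinement condition determines the blocks uniquely, while the dominance condition guarantees that the $\sum_{j\le r}a_j$ smallest entries (with multiplicity) all have value at most $r$, which is exactly (F3). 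Injectivity follows, as you say, because (F2) pins down which values land in which row and (F1) pins down their order. Two cosmetic remarks: the claimed ``induction on the nonempty rows'' for (F3) is unnecessary --- the counting argument you give is already direct --- and it is worth stating explicitly that (F2) prevents a value from appearing in two distinct rows, since that is what makes the block decomposition of $\wt(K)$ well defined.
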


\begin{ex}
We have $\fs_{(0,2,1)} = x^{021} + x^{111} + x^{201} + x^{210}$, as computed by the $\FSSF$s from Figure~\ref{fig:fs021}.
\end{ex}

\begin{lemma}\label{lem:stdeq}
Let $a$ be a weak composition. Then standardization partitions $\SSRIF(a)$ into equivalence classes.  Specifically,
\[\SSRIF(a) = \bigsqcup_{S\in \SRIF(a)} \left\{T\in \SSRIF(a) : \std_{\SSRIF}(T) = S\right\}.\]
\end{lemma}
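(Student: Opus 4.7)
The plan is to observe that the displayed equation is essentially a tautological consequence of $\std_{\SSRIF}$ being a well-defined function from $\SSRIF(a)$ to $\SRIF(a)$. For any function $f: X \to Y$, the fibers $f^{-1}(y)$ for $y \in Y$ partition $X$ (with some possibly empty), since each element of $X$ has a unique image and thus lies in exactly one fiber. Applying this with $f = \std_{\SSRIF}$, $X = \SSRIF(a)$, and $Y = \SRIF(a)$ gives the displayed decomposition. So the entire content reduces to verifying that $\std_{\SSRIF}(T) \in \SRIF(a)$ for every $T \in \SSRIF(a)$: this was asserted in the discussion preceding Example~\ref{ex:SSRIF} but without explicit justification.

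To carry out this verification, I would first note that since standardization replaces the $i^{th}$ smallest entry of $T$ by $i$ for $1 \le i \le n$, the output has entries $\{1,2,\ldots,n\}$ each appearing exactly once, matching the content condition for $\SRIF$. For (SSRIF1), suppose $u \ge v$ are adjacent in a row of $T$ with $u$ immediately to the left of $v$. If $u > v$, then $u$ receives a strictly larger label than $v$ under standardization. If $u = v$, then since reading order proceeds right-to-left along rows, $v$ is visited before $u$, so $v$ receives a smaller label than $u$. In either case, rows of $\std_{\SSRIF}(T)$ still decrease weakly (in fact strictly) from left to right. For (SSRIF2), the leftmost-column entries of $T$ are strictly increasing from bottom to top, so they are all distinct, and standardization preserves the relative order of distinct entries; thus the leftmost column of $\std_{\SSRIF}(T)$ is also strictly increasing. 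Note that (SSRIF3) need not be checked, as $\SRIF$ does not impose it; this is the crucial reason $\std_{\SSRIF}(T)$ lands in $\SRIF(a)$ rather than necessarily in $\SSRIF(a)$.

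I do not expect any significant obstacle here: the only subtlety is the tie-breaking convention and the right-to-left reading order along rows, and both are explicitly recalled just before the lemma. The disjointness of the union is automatic from $\std_{\SSRIF}$ being single-valued, and equality of the two sides follows because every $T \in \SSRIF(a)$ standardizes to something in $\SRIF(a)$ and hence appears in (exactly) one of the fibers on the right-hand side.
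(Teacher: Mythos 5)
Your proposal is correct and matches the paper's approach: the paper's proof is exactly the one-line observation that each $T\in \SSRIF(a)$ has a unique standardization in $\SRIF(a)$, with the verification that $\std_{\SSRIF}(T)\in\SRIF(a)$ (preservation of (SSRIF1) and (SSRIF2) under the order-preserving relabeling with the stated tie-breaking) relegated to the discussion preceding Example~\ref{ex:SSRIF}, just as you note. Your more explicit check of that verification is a faithful expansion of the same argument, not a different route.
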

\begin{proof}
By the definition of standardization, given $T\in \SSRIF(a)$ there is a unique $S\in \SRIF(a)$ such that $\std_{\SSRIF}(T)=S$.
\end{proof}

\begin{lemma}\label{lem:bothsidesempty}
Let $a$ be a weak composition and $S\in \SRIF(a)$. Then $\wdes_{\SRIF}(S)=\emptyset$ if and only if there is no $T\in \SSRIF(a)$ that standardizes to $S$.
\end{lemma}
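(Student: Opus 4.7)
The plan is to leverage the recursive definition of the $p_j$'s in $\wdes_{\SRIF}(S)$: for the reverse direction, construct an explicit preimage whose cell values are read off from the $p_j$'s; for the forward direction, show that any preimage is forced to satisfy $q_j \le p_j$ for a suitable sequence of positive integers $q_j$, so a nonpositive $p_j$ is an immediate obstruction.

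For the $(\Leftarrow)$ direction, I would assume all $p_j \ge 1$ and define $T$ by replacing every entry of $S$ in the $j$-th run $r_j$ with the value $p_j$. The key structural fact is that within any single run $r_j$, the labels occur in weakly decreasing rows of $S$ (no descent occurs inside a run), so the row of the largest label of $r_j$ is the lowest row of $S$ meeting $r_j$. Using this, I would verify the SSRIF axioms: (SSRIF3) holds because $p_j$ is at most the lowest row of $r_j$ by construction; (SSRIF1) holds because horizontally adjacent entries of $S$ either share a run (equal $T$-values) or lie in different runs with the larger $S$-entry in the higher-indexed run (strictly larger $p$-value, by strict monotonicity of the $p_j$'s); (SSRIF2) holds because two entries in the same column of $S$ but different rows cannot share a run (rows within a run decrease weakly, so a run cannot span a strict vertical gap in a single column), forcing strict inequality of the corresponding $p_j$'s. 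Finally, $\std(T) = S$ because reading order depends only on cell positions, and within each run $r_j$ the label $i$ precedes $i+1$ in reading order (either $i+1$ is strictly below $i$, or they share a row with $i+1$ to the left of $i$, by the row-decreasing structure of $S$), which matches the ascending labeling that standardization assigns to cells of common value.

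For the $(\Rightarrow)$ direction, I would argue by contradiction: suppose $T \in \SSRIF(a)$ satisfies $\std_{\SSRIF}(T) = S$, and set $q_j$ to be the value of $T$ at the position of the largest label of $r_j$. By downward induction on $j$ I claim $q_j \le p_j$. The base case $j = k$ is immediate from (SSRIF3), since $p_k$ is the row of $n$. For the inductive step, the weakly-decreasing-rows-within-a-run fact combined with (SSRIF3) yields $q_j \le $ (lowest row in $r_j$); and the standardization tiebreak rule forces the $T$-value at the smallest label of $r_{j+1}$ to strictly exceed that at the largest label of $r_j$ (otherwise, as the former is strictly above the latter in $S$, it would be earlier in reading order and declared smaller by the tiebreak, contradicting its larger standardized label), so $q_j < q_{j+1} \le p_{j+1}$, giving $q_j \le p_{j+1} - 1$. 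Combining yields $q_j \le p_j$. When $\wdes_{\SRIF}(S) = \emptyset$, some $p_j \le 0$ forces $q_j \le 0$, contradicting the positivity of entries of $T$.

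The main obstacle is the verification in $(\Leftarrow)$ that the canonical preimage indeed standardizes back to $S$: this requires controlling the reading-order tiebreak among all $T$-cells carrying a common value $p_j$, and showing that this order exactly matches the consecutive labels inside $r_j$. This reduces to the observation above that consecutive labels within a run of $S$ appear in reverse reading order, which combines the weakly-decreasing-row structure inside a run with the left-to-right decrease along each row of $S$.
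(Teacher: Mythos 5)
Your proposal is correct and takes essentially the same approach as the paper: the converse direction uses the identical canonical preimage (replace every entry of run $r_j$ of $S$ by the value $p_j$) with the same verifications of (SSRIF1)--(SSRIF3) and of $\std_{\SSRIF}(T)=S$ via the reading-order structure of runs, and the forward direction likewise rests on (SSRIF3) bounding entries by their row indices. The only organizational difference is that you run a downward induction comparing the entries of $T$ at the bottom cells of the runs of $S$ directly against the $p_j$'s, whereas the paper shows $p_j\ge j$ for the semistandard filling's own run decomposition and then invokes the fact that standardization preserves weak descent compositions (Lemma~\ref{lem:wdespreservedstd}); both routes are valid.
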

\begin{proof}
We first show that $\wdes_{\SSRIF}(T)$ is never equal to $\emptyset$ for any $T\in \SSRIF(a)$. It is enough to show that for the run decomposition $r_k | \cdots | r_1$ of $T$, we have $p_j\ge j$ for each $1\le j \le k$. Since for any $i$ all $i$'s appear in the same run, we must have all $1$'s in $T$ in $r_1$, all $2$'s in $r_1$ or $r_2$, etc. In particular $r_j$ cannot involve any entries smaller than $j$. By (SSRIF3), this implies all entries of run $r_j$ must appear in row $j$ or higher. This forces $p_j\ge j$ for all $j$, so $\wdes_{\SSRIF}(T)\neq \emptyset$. Therefore, if $T\in \SSRIF(a)$ standardizes to $S$, we have $\emptyset \neq \wdes_{\SSRIF}(T)$.  Since $\wdes_{\SSRIF}(T) = \wdes_{\SRIF}(S)$ by Lemma~\ref{lem:wdespreservedstd}, we have $\wdes_{\SRIF}(S)\neq \emptyset$.

Conversely, suppose $\wdes_{\SRIF}(S)\neq\emptyset$. Construct $T$ by replacing every entry of run $r_j$ of $S$ with the value $p_j$. Then if an entry $i$ is smaller than an entry $j$ in $S$, the value $i$ is replaced with is less than or equal to the value $j$ is replaced with, with equality if and only if $i$ and $j$ are in the same run. Therefore, since entries of $S$ decrease along rows, the entries of $T$ weakly decrease along rows, satisfying (SSRIF1). Moreover, since the leftmost column of $S$ is strictly increasing from bottom to top, no run may use more than one entry from the leftmost column of $S$.  This implies the leftmost column of $T$ is also strictly increasing from bottom to top, satisfying (SSRIF2). For (SSRIF3), by construction, the index of the lowest row that a $p_j$ is placed into is no smaller than $p_j$.

Finally, note the construction of $T$ involves replacing every element in a run in $S$ with a particular value, and the value assigned to each run strictly increases when going from $r_j$ to $r_{j+1}$. 
Hence standardizing $T$ involves replacing all $p_1$'s with $1,\ldots , |r_1|$ in reading order (where $|r_1|$ is the number of entries in the run $r_1$), all $p_2$'s with $|r_1|+1, \ldots , |r_1|+|r_2|$ in reading order, etc, which is exactly the inverse procedure to constructing $T$ from $S$. Hence $T$ standardizes to $S$. 
\end{proof}

We are now ready to prove Theorem~\ref{thm:rdistoslide}, which we recall states that for a weak composition $a$, 
\[\rdis_{a} = \sum_{S\in \SRIF(a)} \fs_{\wdes_{\SRIF}(S)}.\]

\noindent\emph{Proof of Theorem~\ref{thm:rdistoslide}}. 
We claim that for each $S\in \SRIF(a)$ there is a weight-preserving bijection
\begin{equation}\label{eq:SSRIFbijection}
\Psi_S: \left\{T\in \SSRIF(a) : \std_{\SSRIF}(T) = S\right\} \,\,\,\, \rightarrow \,\,\,\, \FSSF(\wdes_{\SRIF}(S)).
\end{equation} 
Since the right-hand side of (\ref{eq:SSRIFbijection}) generates $\fs_{\wdes_{\SRIF}(S)}$, the theorem will follow from this claim and Lemma~\ref{lem:stdeq}. 

To prove the claim, note that by Lemma~\ref{lem:bothsidesempty}, for any $S\in \SRIF(a)$ the left hand side of (\ref{eq:SSRIFbijection}) is empty if and only if the right hand side is empty. Hence we only need to consider those $S\in \SRIF(a)$ such that there is some $T\in \SSRIF(a)$ that standardizes to $S$. For those $T\in \SSRIF(a)$ that standardize to a given $S$, we define $\Psi_S(T)$ to be the filling obtained by placing the entries in run $r_j$ of $T$ into row $p_j$ of $\Psi(T)$ (in the order they appear in the run), where the runs $r_j$ and numbers $p_j$ are those obtained in computing the weak descent composition of $T$ (Remark~\ref{rmk:wdessemistandard}). See Example~\ref{ex:PsiSbijection} for an example of this map.

By construction, $\Psi_S$ is weight-preserving and $\Psi_S(T)$ has shape $\wdes_{\SSRIF}(T)$. Since the weak descent compositions of $T$ and $S$ are the same by Lemma~\ref{lem:wdespreservedstd}, we have that $\Psi_S(T)$ indeed has shape $\wdes_{\SRIF}(S)$. We claim that $\Psi_S(T)$ is an $\FSSF$. Since entries of each run of $T$ are placed into a row of $\Psi_S(T)$ in order from largest to smallest, the entries of $\Psi_S(T)$ decrease along rows, so (F1) is satisfied. Moreover, since for any $j$ all entries of run $r_{j+1}$ in $T$ are strictly larger than each entry of run $r_j$, we have that all entries in each row of $\Psi_S(T)$ are strictly larger than all entries in any lower row of $\Psi_S(T)$, so (F2) is satisfied. The fact that no entry in a row of $\Psi_S(T)$ exceeds its row index (and thus (F3) is satisfied) follows from the definition (Remark~\ref{rmk:wdessemistandard}) of the numbers $p_j$ for the weak descent composition of $T$. Hence the image of $\Psi_S$ is contained in $\FSSF(\wdes_{\SRIF}(S))$.

We must now show that for any $S\in \SRIF(a)$ and $K\in \FSSF(\wdes_{\SRIF}(S))$, there is a unique $T\in \SSRIF(a)$ such that $\std_{\SSRIF}(T)=S$ and $\wt(T) = \wt(K)$.

Given $S$ and $K$, we construct $T$ as follows. Let the runs $r_j$ and numbers $p_j$ be those obtained from $S$ in the construction of $\wdes_{\SRIF}(S)$ (Definition~\ref{def:wdesrdis}). Replace the entries of run $r_j$ in $S$, in order from largest to smallest, with the entries of row $p_j$ of $K$ in order from left to right (i.e., from largest to smallest); let $T$ be the resulting tableau. By construction we have $\wt(T) = \wt(K)$. Notice this is the inverse procedure to $\Psi_S$: for example, applying this to $S$ and $K=\Psi_S(T)$ in Example~\ref{ex:PsiSbijection} recovers $T$. 

This procedure replaces each run in $S$ with the corresponding row of $K$, weakly preserving the relative order between entries of $S$ in the same run. Moreover, the strict inequality between entries in different runs of $S$ is (strictly) preserved, since the smallest entry of a row of $K$ is strictly larger than the largest entry of any lower row of $K$, by (F2). From this, we immediately have that entries of $T$ weakly decrease along rows, so (SSRIF1) is satisfied. Now, since entries strictly increase up the first column of $S$, every entry in the first column of $S$ necessarily belongs to a different run. Since entries lower in the first column are used by runs with smaller indices,  
we have that the first column of $T$ is strictly increasing, so (SSRIF2) is satisfied. Entries in each row $p_j$ of $K$ are all weakly smaller than $p_j$ by (F2), and since $p_j$ is weakly smaller than the row index of the lowest entry of run $r_j$ of $S$, all entries of each run $r_j$ are replaced by entries that are weakly smaller than the row index of the lowest entry in $r_j$; hence (SSRIF3) is satisfied. Finally, since the smallest entry in row $p_{j+1}$ of $K$ is strictly larger than the largest entry of row $p_j$ of $K$, the replacement of entries of $S$ preserves the run structure, i.e., the runs of $S$ and runs of $T$ each involve the same collection of boxes taken in the same order. Hence $T$ standardizes to $S$. The uniqueness of $T$ with these properties follows from the lack of choice at each step. 
\qed

\begin{ex}\label{ex:PsiSbijection}
Consider $T \in \SSRIF(0,2,0,4,3)$ from Example~\ref{ex:SSRIF}, so that 
\[T \,\,\, {=} \,\,\,\vline\tableau{5 & 4 & 2 \\ 3 & 3 & 2 & 1 \\ \\  2 & 1 \\ \\\hline } \qquad \mbox{ standardizes to } \qquad S \,\,\, {=} \,\,\, \vline\tableau{  9 & 8 & 3 \\ 7 & 6 & 4 & 1 \\ \\ 5 & 2 \\ \\\hline }.\]  
Recall from Example~\ref{ex:wdes} that the run decomposition of $T$ is $54|33|222|11$ and that $\wdes_{\SSRIF}(T)=\wdes_{\SRIF}(S)=(2,3,0,2,2)$. Therefore, applying the bijection $\Psi_S$ from the proof of Theorem~\ref{thm:rdistoslide} gives
\[\Psi_S(T) \,\,\, {=} \,\,\, \vline\tableau{5 & 4 \\ 3 & 3 \\ \\ 2 & 2 & 2 \\ 1 & 1 \\\hline} \in \FSSF(\wdes_{\SRIF}(S)).\]
\end{ex}

\begin{ex}\label{ex:rdistofs}
We have 
\[\rdis_{(0,3,0,2)} = \fs_{(0,3,0,2)} + \fs_{(1,3,0,1)} + \fs_{(2,2,0,1)} + \fs_{(0,4,0,1)}\]
as computed by $\SRIF(0,3,0,2)$ (shown in Figure~\ref{fig:0302slide}), and their weak descent compositions, respectively, $(0,3,0,2)$, $(2,2,0,1)$, $(1,3,0,1)$, $(0,4,0,1)$.
\end{ex}

We note that the fundamental slide expansion of a reverse dual immaculate slide polynomial is not in general multiplicity-free, as illustrated by Example~\ref{ex:notmultfree} below.

\begin{ex}\label{ex:notmultfree}
Let $S,S'\in \SRIF(1,2,3)$, where
\[S \mbox{ $=$ } \vline\tableau{  6 & 5 & 1 \\ 4 & 3 \\  2 \\\hline } \qquad S' \mbox{ $=$ }   \vline\tableau{  6 & 5 & 3 \\ 4 & 1 \\  2 \\\hline }\]
Then $\wdes_{\SRIF}(S)=\wdes_{\SRIF}(S')=(2,2,2)$, but $S\neq S'$. Hence by Theorem~\ref{thm:rdistoslide} the coefficient of $\fs_{(2,2,2)}$ in the fundamental slide expansion of $\rdis_{(1,2,3)}$ is at least $2$.
\end{ex}

\section{Reverse dual immaculate slide polynomials stabilize to reverse dual immaculate quasisymmetric functions}\label{sec:revstable}

In this section, we first give a formula for the monomial expansion of reverse dual immaculate quasisymmetric functions. Then we establish conditions under which the reverse dual immaculate slide polynomials are quasisymmetric.  Next, we prove that the reverse dual immaculate slide polynomials limit to the reverse dual immaculate quasisymmetric functions. 
Finally, we show that the expansion of the reverse dual immaculate slide polynomials into fundamental slide polynomials (Theorem~\ref{thm:rdistoslide}) stabilizes to the expansion  (\ref{eq:direv}) of the reverse dual immaculate quasisymmetric functions into fundamental quasisymmetric functions.

Given a composition $\alpha$, recall $\SSRIT(\alpha)$ is all fillings of $D(\alpha)$ in which entries decrease from left to right along rows and strictly increase up the leftmost column (Definition~\ref{def:SSRIT}). Let $\SSRIT_m(\alpha)$ be the fillings in $\SSRIT(\alpha)$ whose largest entry does not exceed $m$. 

\begin{proposition}
Let $\alpha$ be a composition. Then
\[\rdi_\alpha = \sum_{T\in \SSRIT(\alpha)}x^{\wt(T)}.\]
\end{proposition}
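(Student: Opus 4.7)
The plan is to run the classical standardization argument that expands a semistandard generating function over monomials into a sum of fundamentals indexed by standard objects. For $T\in \SSRIT(\alpha)$ I define $\std(T)\in \SRIT(\alpha)$ exactly as for $\SSRIF$s in Section~\ref{sec:revdis}: replace the $i^{th}$ smallest entry of $T$ by $i$, breaking ties by reading order (right-to-left within a row, top-to-bottom across rows). A direct check, using that standardization preserves weak and strict inequalities, confirms $\std(T)\in \SRIT(\alpha)$. This partitions $\SSRIT(\alpha)$ into fibers indexed by $\SRIT(\alpha)$, so in view of definition (\ref{eq:direv}) of $\rdi_\alpha$, it suffices to prove that for each $S\in \SRIT(\alpha)$,
\[\sum_{T\in \SSRIT(\alpha),\,\std(T)=S} x^{\wt(T)} \;=\; F_{\des_{\SRIT}(S)}.\]

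I would establish this via a weight-preserving bijection between the fiber of $S$ and weakly increasing sequences $c_1 \le c_2 \le \cdots \le c_n$ of positive integers satisfying $c_i < c_{i+1}$ whenever $i\in \des_{\SRIT}(S)$. The bijection sends $T$ to $(c_1,\ldots,c_n)$, where $c_i$ is the entry $T$ places in the box that $S$ labels $i$; the inverse fills the box labelled $i$ with $c_i$. That $c$ is weakly increasing is immediate from the definition of standardization. The strict inequality at a descent follows from the tiebreaking rule: if $i+1$ is strictly above $i$ in $S$ and $c_i = c_{i+1}$, then the box of $i+1$ is earlier in reading order than that of $i$, so standardization would assign it the smaller value, contradicting $i+1>i$. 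The classical monomial formula for $F_\beta$ as a sum over weakly increasing sequences with strict ascents at a prescribed descent set then yields exactly $F_{\des_{\SRIT}(S)}$, and summing the fiberwise identity over $\SRIT(\alpha)$ produces the desired equality.

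The main obstacle, and the step requiring real care, is verifying that the inverse map produces a valid $\SSRIT$. The row-weak-decrease condition is automatic since rows of $S$ strictly decrease while $c$ is weakly increasing. Strict increase up the leftmost column is the delicate point: for adjacent leftmost-column boxes of $S$ labelled $i$ below and $j$ above with $i<j$, I need $c_i < c_j$. The argument is that along the chain $i, i+1, \ldots, j$, each non-descent of $S$ places the next label weakly lower while each descent places it strictly higher; since the box labelled $j$ sits in a strictly higher row than the box labelled $i$, at least one consecutive pair $k,k+1$ in this range must be a descent, forcing $c_k < c_{k+1}$ and hence $c_i \le c_k < c_{k+1} \le c_j$. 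As an alternative route, one could instead restrict to $\ell$ variables, construct the bijection $\SSIT(\alpha) \to \SSRIT(\rev(\alpha))$ sending $T$ to its row-reversal with each entry $i$ replaced by $\ell+1-i$, and combine Theorem~\ref{thm:ditomonomial} with Proposition~\ref{prop:ditordi} before taking $\ell\to\infty$; but the standardization approach is cleaner and intrinsic to the reverse paradigm.
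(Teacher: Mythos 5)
Your proof is correct, but it takes a genuinely different route from the paper. The paper's proof is a two-line reduction: it exhibits the weight-reversing involution $\SSRIT_m(\alpha)\to\SSIT_m(\rev(\alpha))$ (reverse the rows, replace $i$ by $m+1-i$), invokes the known monomial formula for $\di_{\rev(\alpha)}$ (Theorem~\ref{thm:ditomonomial}) together with Proposition~\ref{prop:ditordi}, and lets $m\to\infty$ --- essentially the ``alternative route'' you mention in your last sentence. Your main argument instead works intrinsically in the reverse paradigm: you partition $\SSRIT(\alpha)$ into standardization fibers over $\SRIT(\alpha)$ and identify each fiber's generating function with $F_{\des_{\SRIT}(S)}$ via the classical $P$-partition-style encoding by weakly increasing sequences with strict ascents at descents, which matches the defining formula (\ref{eq:direv}) term by term. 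The delicate points are exactly the ones you flag, and your treatment of them is sound: strictness of $c$ at descents follows from the tie-breaking convention (the higher box is earlier in reading order, hence standardizes smaller), and the leftmost-column condition for the inverse map follows because a chain $i,i+1,\dots,j$ joining two first-column boxes must contain at least one descent, since otherwise the rows of the chain would weakly decrease, contradicting that $j$ sits strictly above $i$. What each approach buys: the paper's is shorter because it outsources the combinatorics to \cite{BBSSZ14}, whereas yours is self-contained, does not depend on Theorem~\ref{thm:ditomonomial}, and makes visible the same fiber/standardization structure that the paper later develops at the polynomial level for $\SSRIF$s in Sections~\ref{sec:revdis}--\ref{sec:revtofs}. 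The one small point worth making explicit if you write this up is the verification that the filling produced by the inverse map standardizes back to $S$ (i.e., that within a block of equal values $c_k=c_{k+1}=\cdots$ the corresponding boxes of $S$ occur in reading order); this follows from the non-descent condition by the same case analysis you use elsewhere, but it should be stated.
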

\begin{proof}
For any $m>0$, we have a weight-reversing involution $\SSRIT_m(\alpha)\rightarrow \SSIT_m(\rev(\alpha))$ via replacing every entry $i$ of $T$ with $m+1-i$, and reversing the order of the rows. Since (by Theorem~\ref{thm:ditomonomial}) $\di_{\rev(\alpha)} (x_1,\hdots , x_m) = \sum_{T\in \SSIT_m(\rev(\alpha))}x^{\wt(T)}$, we have $\rdi_\alpha(x_1,\ldots , x_m) = \sum_{T\in \SSRIT_m(\alpha)}x^{\wt(T)}$. Since this holds for all $m>0$, the result follows by letting $m\to \infty$.
\end{proof}

Given a weak composition $a$, let $0^m \times a$ denote the weak composition formed by prepending $m$ zeros to $a$. For example, if $a=(1,0,2)$ then $0^3 \times a = (0,0,0,1,0,2)$. Let $\flatten(a)$ denote the composition obtained by removing all zero entries from $a$, e.g., $\flatten(0,0,0,1,0,2)=(1,2)$. We can now characterize exactly when a reverse dual immaculate slide polynomial is quasisymmetric.

\begin{proposition}\label{prop:rdisQS}
Let $a$ be a weak composition of length $\ell$. Then $\rdis_a$ is quasisymmetric in $x_1, \ldots, x_\ell$ if and only if $a$ has no zero entry to the right of a nonzero entry. In this case, 
\[\rdis_a = \rdi_{\flatten(a)}(x_1, \ldots , x_\ell).\]
\end{proposition}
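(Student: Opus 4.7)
The plan is to prove the two directions separately.

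For the sufficiency direction, assume $a = 0^m \times \alpha$ where $\alpha = \flatten(a)$ has length $k$ and $m + k = \ell$. I would set up a weight-preserving bijection between $\SSRIF(a)$ and the set of elements of $\SSRIT(\alpha)$ whose entries are at most $\ell$, where the map deletes the $m$ empty bottom rows and the inverse prepends them. The only subtle check is that SSRIF3 on the source corresponds to the entry bound $\le \ell$ on the target. For an $\SSRIT$ of shape $\alpha$ with entries bounded by $\ell$, the leftmost column entries $c'_1 < c'_2 < \cdots < c'_k$ satisfy $c'_k \le \ell = m+k$, and strict increase forces $c'_j \le m+j$ for every $j$; combined with the row-decreasing rule, every entry in row $j$ of $\alpha$ is at most $m+j$, which is exactly SSRIF3 for the corresponding row $m+j$ of shape $a$. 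Summing over the bijection gives $\rdis_a = \rdi_\alpha(x_1, \ldots, x_\ell)$.

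For the necessity direction, assume $a$ is not flat, and let $k = \ell(\flatten(a))$. Since the $k$ nonzero positions of $a$ do not form the suffix $\{\ell-k+1, \ldots, \ell\}$, at least one nonzero entry of $a$ sits at a position $\le \ell - k$. I would compare two monomials with the same flattening. First, $x^a$ appears in $\rdis_a$ with positive coefficient, witnessed by the SSRIF that fills each nonempty row $i$ with $a_i$ copies of $i$ (this trivially satisfies all three SSRIF conditions). Second, let $c'$ be the weak composition placing the nonzero parts of $a$ in their original order at positions $\ell-k+1, \ldots, \ell$, so $\flatten(c') = \flatten(a)$. Any SSRIF of shape $a$ with weight $c'$ would use only entries from $\{\ell-k+1, \ldots, \ell\}$, and by SSRIF3 no such entry can be placed in any row of index $\le \ell - k$; but our assumption guarantees a nonempty row at such an index, a contradiction. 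Thus the coefficient of $x^{c'}$ is zero while that of $x^a$ is positive, so $\rdis_a$ is not quasisymmetric.

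The main technical point will be the inequality $c'_j \le m+j$ in the sufficiency direction, since this is where the row-index constraint SSRIF3 matches up precisely against the leftmost-column growth condition for an $\SSRIT$ of shape $\alpha$. Both bijection verifications are otherwise routine, and the necessity argument is essentially an obstruction coming from the same row-index constraint being too restrictive when the shape is not flat.
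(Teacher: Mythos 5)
Your proposal is correct and follows essentially the same route as the paper: the sufficiency direction is exactly the paper's weight-preserving bijection between $\SSRIF(a)$ and $\SSRIT_\ell(\flatten(a))$ given by deleting/prepending the empty rows, with the same verification that (SSRIF3) matches the entry bound $\ell$ via the strictly increasing leftmost column. The only (minor) divergence is in necessity, where the paper compares $x^a$ with $x^{s_i(a)}$ for a single adjacent swap at a position with $a_i\neq 0$, $a_{i+1}=0$, while you push all nonzero parts to the rightmost positions; both obstructions rest on the same (SSRIF3) constraint, and your witness arguably makes the nonexistence of the required filling more immediate.
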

\begin{proof}
Suppose $a$ has no zero entry to the right of a nonzero entry. It is enough to establish a weight-preserving bijection $\psi: \SSRIT_\ell(\flatten(a))\rightarrow \SSRIF(a)$. For $T\in \SSRIT_\ell(\flatten(a))$, let $\psi(T)$ be the the filling of shape $a$ obtained by letting the $i^{th}$ nonempty row of $\psi(T)$ (from the top) be the $i^{th}$ row of $T$ (from the top). Then $\psi(T)$ clearly satisfies (SSRIF1) and (SSRIF2) from Definition~\ref{def:SSRIF}. Since all entries in the $i^{th}$ row of $T$ from the top are at most $\ell+1-i$, and $a$ has no zero entry to the right of a nonzero entry, all entries in the $i^{th}$ row from the top of $\psi(T)$ are at most $\ell+1-i$, and so $\psi(T)$ satisfies (SSRIF3). This map is invertible, via removing all empty rows of elements of $\SSRIF(a)$. 

Conversely, suppose $a_i\neq 0$ but $a_{i+1} = 0$, for some $i<\ell$. Then $\rdis_a$ includes the monomial $x^a$, given by the semistandard reverse immaculate filling $T'$ whose entries in each row $i$ are all $i$. By the definition of a $\SSRIF$, no other $T\in \SSRIF(a)$ can have a larger entry than $T'$ does in any box. Therefore, since $a_i>a_{i+1}$, $\rdis_a$ does not have the monomial $x^{s_i(a)}$, where $s_i(a)$ is the weak composition obtained from $a$ by interchanging the entries $a_i$ and $a_{i+1}$. But since $a_{i+1}=0$, any quasisymmetric polynomial in variables $x_1,\hdots , x_m$ that contains $x^a$ must also contain $x^{s_i(a)}$.
\end{proof}

Note that if the zero entries in $a$ that appear after some nonzero entry all appear after the last nonzero entry of $a$, then $\rdis_a$ will be quasisymmetric in the truncated set of variables $x_1, \ldots , x_r$, where $a_r$ is the last nonzero part of $a$.

We are now ready to prove Theorem~\ref{thm:stablelimit} which we recall states that for a weak composition $a$,
\[\lim_{m\to \infty} \rdis_{0^m\times a} = \rdi_{\flatten(a)}.\]

\noindent\emph{Proof of Theorem~\ref{thm:stablelimit}:}  
We will show that $\rdis_{0^m\times a}(x_1, \ldots , x_m, 0, 0,  \ldots)$ is equal to $\rdi_{\flatten(a)}(x_1, \ldots , x_m, 0, 0,  \ldots)$, for any $m>0$. The theorem then follows by letting $m\to \infty$.

It suffices to establish a (weight-preserving) bijection \[\psi: \SSRIT_m(\flatten(a))\rightarrow \SSRIF_m(0^m\times a).\]
 
Suppose $a$ has nonzero entries in positions $n_1 < n_2 < \cdots < n_k$. Let $T\in \SSRIT_m(\flatten(a))$ and define $\psi(T)$ to be the filling of shape $0^m\times a$ obtained by letting the $n_i${th} row of $\psi(T)$ be the $i^{th}$ row of $T$. See Example~\ref{ex:psibijection} for an example. 
Clearly $\psi$ is injective and weight-preserving. 

We claim $\psi(T) \in \SSRIF_m(0^m\times a)$. Since $\psi$ only moves rows of $T$ while preserving their relative order, the only condition from Definition~\ref{def:SSRIF} that needs to be checked is that the entries in row $j$ of $\psi(T)$ do not exceed $j$. But this follows since all entries of $\psi(T)$ are at most $m$, and the lowest nonempty row of $0^m\times a$ has index $m+1$. 

For an inverse, consider the map $\phi: \SSRIF_m(0^m\times a)\rightarrow \SSRIT_m(\flatten(a))$ defined by removing all empty rows of the $\SSRIF S$. By definition, $\phi(S)$ is a tableau of shape $D(\flatten(a))$. Since the entries in the leftmost column of $S$ strictly increase and do not exceed $m$, the first column entry in row $n_i$ of $S$ (and thus every entry in row $n_i$ of $S$) does not exceed $i$. Hence $\phi(S)\in \SSRIT_m(\flatten(a))$. To see injectivity, suppose $S'\in \SSRIF_m(0^m\times a)$ such that $S'\neq S$.  In particular, suppose that in a certain box $S$ has entry $x$ and $S'$ has entry $y\neq x$. Since $\phi$ preserves entries, $\phi(S)$ will have entry $x$ and $\phi(S')$ entry $y$ in the same box of $D(\flatten(a))$. Hence $\phi$ is injective. It remains to note that $\phi$ is mutually inverse with $\psi$.
\qed

\begin{ex}\label{ex:psibijection}
If $a=(4,0,3)$ and $m=3$, the bijection $\psi$ from the proof of Theorem~\ref{thm:stablelimit} maps the $T\in \SSRIT_3(4,3)$ on the left below to $\psi(T)\in \SSRIF_3(0,0,0,4,0,3)$ on the right. 
\[
      \begin{array}{c@{\hskip2\cellsize}c@{\hskip2\cellsize}c@{\hskip2\cellsize}c}
        T=\tableau{ 3  & 3 & 2 \\ 2 & 2 & 1 & 1 }  & &
        \psi(T) \,\,\, {=} \,\,\, \vline\tableau{ 3  & 3 & 2 \\ \\ 2 & 2 & 1 & 1 \\ \\ \\ \\  \hline } \\
             \end{array}\]
\end{ex}

\begin{cor}\label{cor:stablelimit}
The formula of Theorem~\ref{thm:stablelimit} stabilizes to the formula (\ref{eq:direv})
\[\rdi_{\alpha} = \sum_{S\in \SRIT(\alpha)}F_{\des_{\SRIT}(S)}\]
for the expansion of the reverse dual immaculate quasisymmetric function $\rdi_{\flatten(a)}$ into fundamental quasisymmetric functions.
\end{cor}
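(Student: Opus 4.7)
The plan is to apply Theorem~\ref{thm:rdistoslide} with the weak composition $0^m \times a$ in place of $a$ and then take the stable limit $m \to \infty$. The left-hand side limits to $\rdi_{\flatten(a)}$ by Theorem~\ref{thm:stablelimit}, so the task reduces to identifying the stable limit of the right-hand side
\[ \sum_{S \in \SRIF(0^m \times a)} \fs_{\wdes_{\SRIF}(S)}. \]

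The first key observation is that removing empty rows gives a natural bijection $\SRIF(0^m \times a) \to \SRIT(\flatten(a))$, and this bijection preserves descent sets since descents are defined purely in terms of the relative rows of $i$ and $i+1$. Given $S_m \in \SRIF(0^m \times a)$ corresponding to $T \in \SRIT(\flatten(a))$, both have the same run decomposition $r_k \mid \cdots \mid r_1$ with $|r_j| = \beta_j$, where $\beta = \des_{\SRIT}(T)$. The next step is to compute the values $p_j^{(m)}$ appearing in $\wdes_{\SRIF}(S_m)$: since prepending $m$ zeros to $a$ shifts all row indices up by $m$, the ``lowest row containing an entry from $r_j$'' in $S_m$ equals $m + L_j$, where $L_j$ is the analogous row index for $T$. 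An easy induction on the recursion in Definition~\ref{def:wdesrdis} then gives $p_j^{(m)} = m + q_j$ for all $m$ sufficiently large, where each $q_j$ depends only on $T$. Consequently, for all large $m$ one has $\wdes_{\SRIF}(S_m) = 0^m \times c_T$, where $c_T$ is a fixed weak composition (depending only on $T$) with $\flatten(c_T) = (|r_1|, \ldots, |r_k|) = \des_{\SRIT}(T)$.

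The final step invokes the known stability of fundamental slide polynomials, namely $\lim_{m\to\infty} \fs_{0^m \times b} = F_{\flatten(b)}$ from \cite{Assaf.Searles:1}. Applied term-by-term (the bijection above makes the indexing set stable in $m$), this yields $\lim_{m \to \infty} \fs_{\wdes_{\SRIF}(S_m)} = F_{\des_{\SRIT}(T)}$, and summing over $T \in \SRIT(\flatten(a))$ gives the formula (\ref{eq:direv}). The main technical obstacle is the explicit verification that $\wdes_{\SRIF}(S_m) = 0^m \times c_T$ for large $m$; this amounts to checking that the minima appearing in the recursive definition of the $p_j$'s are eventually realized by terms that grow linearly in $m$, so that the leading zeros of $\wdes_{\SRIF}(S_m)$ grow with $m$ while the nonzero tail stabilizes.
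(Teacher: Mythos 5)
Your proposal is correct and follows essentially the same route as the paper: apply Theorem~\ref{thm:rdistoslide} to $0^m\times a$, use Theorem~\ref{thm:stablelimit} on the left side, the descent-preserving remove-empty-rows bijection $\SRIF(0^m\times a)\to\SRIT(\flatten(a))$, and the Assaf--Searles stability $\lim_m\fs_{0^m\times b}=F_{\flatten(b)}$ on the right. Your explicit verification that $p_j^{(m)}=m+q_j$, so that $\wdes_{\SRIF}(S_m)$ has the form $0^{m'}\times c_T$ with $\flatten(c_T)$ equal to the descent composition, is a detail the paper's proof states only implicitly (``the flattening of $\wdes_{\SRIF}(S)$ gives the descent composition''), but it is the same argument.
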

\begin{proof}
Recall Theorem~\ref{thm:rdistoslide} states that for a weak composition $a$, we have 
\[\rdis_{a} = \sum_{S\in \SRIF(a)} \fs_{\wdes_{\SRIF}(S)}.\]
By Theorem~\ref{thm:stablelimit}, the left hand side of this stabilizes to $\rdi_{\flatten(a)}$. By \cite{Assaf.Searles:1} the fundamental slide polynomial $\fs_a$ stabilizes to the fundamental quasisymmetric function $F_{\flatten(a)}$, hence the right hand side stabilizes to $\sum_{S\in \SRIF(a)} F_{\flatten(\wdes_{\SRIF}(S))}$. 

For $S\in \SRIF(a)$, the nonzero parts of $\wdes_{\SRIF}(S)$ are by definition the lengths of the runs of $S$ (between descents), hence the flattening of $\wdes_{\SRIF}(S)$ gives the descent composition of $S$. Moreover there is a bijection between $\SRIF(a)$ and $\SRIT(\flatten(a))$, obtained by removing empty rows from elements of $\SRIF(a)$, which clearly preserves the descents. Hence we have 
\[\sum_{S\in \SRIF(a)} F_{\flatten(\wdes_{\SRIF}(S))} = \sum_{S\in \SRIT(\flatten(a))} F_{\des_{\SRIT}(S)}\]
as required.
\end{proof}

\section{A standard formula for the fundamental slide expansion of a quasi-key polynomial}\label{sec:qktofs}

In this section, we consider the quasi-key polynomials, which are a lifting of the quasisymmetric Schur basis for $\QSym$ to the polynomial ring \cite{Assaf.Searles:2}.
We give a new formula, in terms of \emph{standard} fillings, for the expansion of a quasi-key polynomial into the fundamental slide basis. This standard formula will be needed for the proofs in Section~\ref{sec:distoyqk}. 

Let $a$ be a weak composition. A \emph{triple} in $D(a)$ is a collection of three boxes in the positive quadrant with two adjacent in a row, and the third box in the same column as but in a higher row (possibly with boxes in between) than the box on the right. 
\begin{figure}[ht]
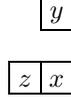

  \begin{displaymath}
    \begin{array}{l}
       \tableau{ & y } \\ \\ \tableau{  z & x }
    \end{array}
  \end{displaymath}
  \caption{Triples for weak composition diagrams.}\label{fig:reversetriples}
\end{figure}

To be a triple in $D(a)$, the boxes labeled $z$ and $y$ must be in $D(a)$, but the one labeled $x$ does not have to be. Given any filling of the boxes of $D(a)$ with integers, a triple in $D(a)$ is said to be an \emph{inversion triple} if whenever $y\le z$ we have $y<x$. If the box labeled $x$ in the triple is not in $D(a)$, it is assumed to have label $0$ for the purpose of deciding if the triple is inversion or not.

\begin{definition}{\label{def:RSSF}}
Given a weak composition $a$ of $n$, define a \emph{reverse semi-skyline filling} to be a filling of $D(a)$ such that
\begin{enumerate}[leftmargin=1.8cm]
\item[(RSSF1)] Row entries weakly decrease from left to right.
\item[(RSSF2)] Entries in the leftmost column strictly increase from bottom to top, and entries in any column are distinct.
\item[(RSSF3)] Entries in the $i^{th}$ row from the bottom do not exceed $i$. 
\item[(RSSF4)] All triples are inversion triples.
\end{enumerate}
A \emph{standard reverse skyline filling} of shape $a$ is a filling of $D(a)$ with $1,\ldots , n$ (each used once) that satisfies conditions (RSSF1), (RSSF2) and (RSSF4), but not necessarily (RSSF3). Let $\RSSF(a)$ (respectively, $\RSF(a)$) denote the set of reverse semi-skyline fillings (respectively, standard reverse skyline fillings) of shape $a$. 
\end{definition}

\begin{ex}
Figure~\ref{fig:QK0302} shows $\RSSF(a)$ for $a=(0,3,0,2)$.
\end{ex}

\begin{figure}[ht]
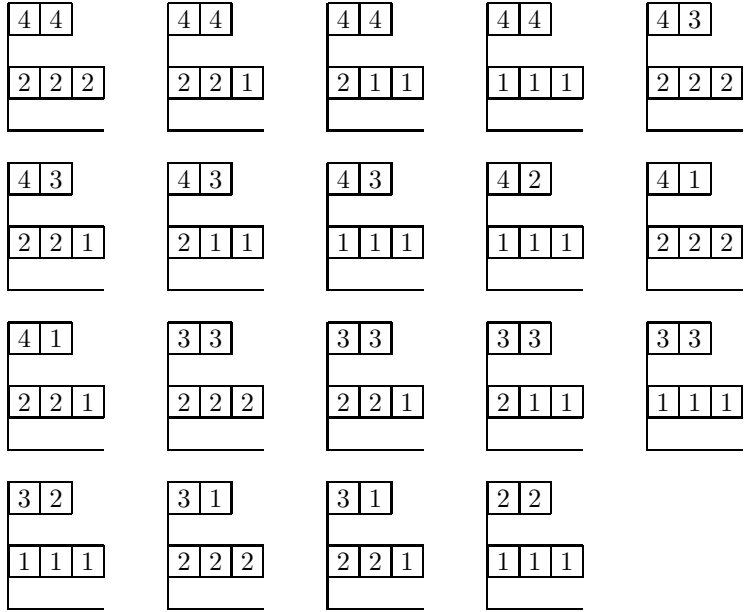

\begin{displaymath}
 \begin{array}{c@{\hskip2\cellsize}c@{\hskip2\cellsize}c@{\hskip2\cellsize}c@{\hskip2\cellsize}c@{\hskip2\cellsize}c@{\hskip2\cellsize}c}
 \vline \tableau{  4 & 4 \\ \\  2 & 2 & 2 \\ \\  \hline } &   \vline \tableau{  4 & 4 \\ \\  2 & 2 & 1 \\ \\  \hline }  &  \vline \tableau{  4 & 4 \\ \\  2 & 1 & 1 \\ \\  \hline }  &   \vline \tableau{  4 & 4 \\ \\  1 & 1 & 1 \\ \\  \hline }  &   \vline \tableau{  4 & 3 \\ \\  2 & 2 & 2 \\ \\  \hline }  \\ \\  
 \vline \tableau{  4 & 3 \\ \\  2 & 2 & 1 \\ \\  \hline } &   \vline \tableau{  4 & 3 \\ \\  2 & 1 & 1 \\ \\  \hline }  &  \vline \tableau{  4 & 3 \\ \\  1 & 1 & 1 \\ \\  \hline }  &   \vline \tableau{  4 & 2 \\ \\  1 & 1 & 1 \\ \\  \hline }  &   \vline \tableau{  4 & 1 \\ \\  2 & 2 & 2 \\ \\  \hline }  \\ \\   
  \vline \tableau{  4 & 1 \\ \\  2 & 2 & 1 \\ \\  \hline } &   \vline \tableau{  3 & 3 \\ \\  2 & 2 & 2 \\ \\  \hline }  &  \vline \tableau{  3 & 3 \\ \\  2 & 2 & 1 \\ \\  \hline }  &   \vline \tableau{  3 & 3 \\ \\  2 & 1 & 1 \\ \\  \hline }  &   \vline \tableau{  3 & 3 \\ \\  1 & 1 & 1 \\ \\  \hline }  \\ \\   
  \vline \tableau{  3 & 2 \\ \\  1 & 1 & 1 \\ \\  \hline }  &   \vline \tableau{  3 & 1 \\ \\  2 & 2 & 2 \\ \\  \hline }  &   \vline \tableau{  3 & 1 \\ \\  2 & 2 & 1 \\ \\  \hline }  &   \vline \tableau{  2 & 2 \\ \\  1 & 1 & 1 \\ \\  \hline }
    \end{array}
\end{displaymath}
\caption{The 19 reverse semi-skyline fillings of shape $(0,3,0,2)$.}\label{fig:QK0302}
\end{figure}

If condition (RSSF2) on reverse semi-skyline fillings is tightened to demand that entries in the leftmost column are equal to their row index, the resulting objects are the \emph{semi-skyline augmented fillings} from \cite{Mas08}, which generate the \emph{Demazure atom} basis for polynomials. The $\RSSF$s were first defined in \cite{Sea}, using a pair of triple conditions equivalent to (RSSF4), in order to give the following formula for the quasi-key polynomials $\qkey_a$: 

\begin{equation}\label{eq:qkey}
\qkey_a = \sum_{T\in \RSSF(a)}x^{\wt(T)}.
\end{equation}

\begin{ex}\label{ex:qkey0302}
For $a=(0,3,0,2)$, we have
\begin{align*}
\qkey_a & = x^{0302} + x^{1202}  +x^{2102} +x^{3002} +x^{0311} +x^{1211} +x^{2111} +x^{3011} +x^{3101} +x^{1301} \\
 & +x^{2201} +x^{0320} +x^{1220} +x^{2120} +x^{3020} +x^{3110} +x^{1310} +x^{2210} +x^{3200}, 
\end{align*}
which is computed from the reverse semi-skyline fillings shown in Figure~\ref{fig:QK0302}.
\end{ex}

\begin{remark}
Quasi-key polynomials were first defined in terms of \emph{quasi-Kohnert tableaux} \cite{Assaf.Searles:2}. An equivalent model called \emph{quasi-key tableaux}, which are fillings of $D(a)$, was introduced in \cite{Sea}. Although both quasi-key tableaux and RSSFs are fillings of $D(a)$ that generate the quasi-key polynomials, these families of fillings are different. Section 5 of \cite{Sea} is devoted to establishing a bijection between them.
\end{remark}

Analogously to Theorem~\ref{thm:stablelimit}, the stable limit of the quasi-key polynomial associated to the weak composition $a$ is the quasisymmetric Schur function associated to $\flatten(a)$:

\begin{theorem}\cite{Assaf.Searles:2}\label{thm:reversestablelimit}
Let $a$ be a weak composition of length $\ell$. Then 
\[\lim_{m\to \infty} \qkey_{0^m\times a} = \qs_{\flatten(a)}.\]
\end{theorem}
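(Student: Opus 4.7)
The plan is to mirror the proof of Theorem~\ref{thm:stablelimit}, establishing a weight-preserving bijection between the fillings generating $\qs_{\flatten(a)}(x_1, \ldots, x_m)$ and those generating $\qkey_{0^m \times a}(x_1, \ldots, x_m)$ for each fixed $m$, and then letting $m \to \infty$. Recall (cf. \cite{HLMvW11a}) that $\qs_\alpha$ is the generating function of semistandard reverse composition tableaux of shape $\alpha$; restricted to $m$ variables, these are fillings of $D(\alpha)$ with positive integers at most $m$ satisfying: row entries weakly decrease from left to right, leftmost column entries strictly increase from bottom to top, and a triple condition analogous to (RSSF4).

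First, given such a filling $T$ of $D(\flatten(a))$ with entries at most $m$, I would define $\psi(T)$ by placing the $i$th row of $T$ (from the bottom) at row position $n_i$ in a diagram of shape $0^m \times a$, where $n_1 < n_2 < \cdots < n_k$ are the positions of the nonzero parts of $0^m \times a$. The resulting filling of $D(0^m \times a)$ preserves all row content, and so immediately inherits conditions (RSSF1) and (RSSF2) from $T$.

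Next, I would verify (RSSF3) and (RSSF4). Condition (RSSF3) holds because all entries of $\psi(T)$ are at most $m$, while the lowest nonempty row of $D(0^m \times a)$ has index at least $m+1$. For (RSSF4), I would argue that triples in $D(0^m \times a)$ are in natural bijection with triples in $D(\flatten(a))$: each triple consists of two horizontally adjacent boxes together with a third box in the column above the rightmost, and empty rows contribute no boxes and therefore neither create nor destroy triples. Since $\psi$ preserves entries, an inversion triple of $T$ corresponds to an inversion triple of $\psi(T)$ and vice versa. The inverse map, which removes all empty rows of $S \in \RSSF(0^m \times a)$ with entries at most $m$, is well-defined and mutually inverse with $\psi$, and is also weight-preserving. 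This gives
\[ \qkey_{0^m \times a}(x_1, \ldots, x_m) = \qs_{\flatten(a)}(x_1, \ldots, x_m) \]
for all $m$, and letting $m \to \infty$ yields the stable limit.

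The main subtlety, rather than a major obstacle, is confirming that the triple conditions of the two formalisms match up under the row-repositioning map. This requires comparing the (RSSF4) triple rule with the analogous rule on semistandard reverse composition tableaux of composition shape, and checking that inserting or removing intervening empty rows never changes which triples are inversion triples; the ``possibly with boxes in between'' clause in the definition of a triple is what makes this work cleanly.
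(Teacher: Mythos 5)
The paper does not actually prove Theorem~\ref{thm:reversestablelimit}: it is quoted from \cite{Assaf.Searles:2}, where quasi-key polynomials are defined via quasi-Kohnert tableaux, so there is no in-paper proof to compare against. Your argument is a correct self-contained proof within the $\RSSF$ model (\ref{eq:qkey}) adopted here, and it is exactly the transplant of the paper's proof of Theorem~\ref{thm:stablelimit} that one would hope for: fix $m$, reposition rows into the nonzero positions of $0^m\times a$, observe that (RSSF1) and (RSSF2) are inherited, that (RSSF3) is vacuously satisfied because every nonempty row of $D(0^m\times a)$ has index at least $m+1$ while entries are at most $m$, and that triples correspond bijectively because a triple is determined by a box $z=(r,c)$, a box $y=(r',c+1)$ with $r'>r$, and the (possibly absent) box $x=(r,c+1)$ --- data unaffected by inserting or deleting empty rows. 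The one point you are right to flag as the ``main subtlety'' deserves slightly more care than a remark: you need the combinatorial model for $\qs_{\flatten(a)}$ to be \emph{literally} ``$\RSSF$ on a composition shape with (RSSF3) dropped,'' including the distinct-column-entries clause of (RSSF2) and the same orientation of the triple rule. Both hold --- within this paper one can see it by composing the generating-set description of $\yqs_\alpha$ via $\YCT$s (Definition~\ref{def:YCT}) with the stated relation $\qs_{\rev(\alpha)}(x_1,\ldots,x_\ell)=\yqs_\alpha(x_\ell,\ldots,x_1)$, noting that the Young triple rule forces distinct column entries on a composition diagram --- but a sentence verifying this matching, or a precise citation for it, should be added. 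With that, the proof is complete.
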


The reading word of a reverse semi-skyline filling or standard reverse skyline filling is the word obtained by reading the entries up columns, starting at the rightmost column and proceeding to the leftmost column. This order on entries is called the \emph{reading order}. 
The descent set of a standard reverse skyline filling $S$, denoted by $\des_{\RSF}(S)$, is the set of all $i$ such that $i+1$ is weakly to the right of $i$. 

\begin{remark}
It is possible that a filling $S$ of shape $a$ may belong to both $\SRIF(a)$ and $\RSF(a)$. However, we emphasise that $\des_{\SRIF}(S)$ and $\des_{\RSF}(S)$ are typically not equal.
\end{remark}

\begin{definition}\label{def:wdesqkey}
Let $a$ be a weak composition of $n$ of length $\ell$. The \emph{weak descent composition} of $S\in \RSF(a)$, denoted $\wdes_{\RSF}(S)$, is the weak composition of length $\ell$ obtained as follows. Decompose the word $n\, n-1 \, \ldots 2\, 1$ into runs by placing a bar between $i+1$ and $i$ whenever $i\in \des_{\RSF}(S)$. Suppose there are $k$ runs, i.e., the run decomposition yields $r_k | r_{k-1} | \ldots | r_1$.

Define a strictly decreasing sequence of numbers $p_k, \ldots , p_1$ recursively as follows. Let $p_k$ be the row index of the box containing $n$.  
Now, let $j<k$ and suppose $p_k, \ldots , p_{j+1}$ are known. Define $p_j$ to be the smaller of $p_{j+1}-1$ and the index of the lowest row containing an entry from the $j^{th}$ run $r_j$.

If any $p_j$ is nonpositive, then $\wdes_{\RSF}(S)=\emptyset$. Otherwise, $\wdes_{\RSF}(S)$ is the weak composition of length $\ell$ whose $p_j^{th}$ part is the number of entries in the run $r_j$, and whose other parts are all zero. 
\end{definition}

\begin{remark}
Notice that the definition of $\wdes_{\RSF}(S)$ is identical to the definition of $\wdes_{\SRIF}(S)$ (Definition~\ref{def:wdesrdis}), except that the run decomposition of $n \, n-1 \ldots 2 \, 1$ is determined by when $i\in \des_{\RSF}(S)$, as opposed to $\des_{\SRIF}(S)$.
\end{remark}

\begin{remark}\label{rmk:wdessemistandardqkey1}
Definition~\ref{def:wdesqkey} extends straightforwardly to give runs $r_j$, numbers $p_j$, and thus a weak descent composition $\wdes_{\RSSF}(T)$ for a reverse semi-skyline filling $T$. 
This is exactly analogous to the definition of $\wdes_{\SSRIF}(T)$ in Remark~\ref{rmk:wdessemistandard}.
\end{remark}

\begin{ex}\label{ex:SF}
The standard reverse skyline fillings of shape $(0,3,0,2)$ are shown in Figure~\ref{fig:0302SF} below.
\end{ex}

\begin{figure}[ht]
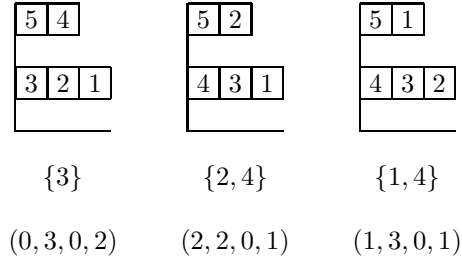

  \begin{center}
    \begin{displaymath}
      \begin{array}{c@{\hskip2\cellsize}c@{\hskip2\cellsize}c@{\hskip2\cellsize}c}
        \vline\tableau{  5 & 4  \\ \\  3 & 2 & 1 \\ \\\hline } &
        \vline\tableau{  5 & 2  \\ \\  4 & 3 & 1 \\ \\\hline } &
        \vline\tableau{  5 & 1  \\ \\  4 & 3 & 2 \\ \\\hline } \\ \\
        \{3\} & \{2,4\} & \{1,4\} \\ \\
        (0,3,0,2) & (2,2,0,1) & (1,3,0,1)
                     \end{array}
    \end{displaymath}
  \end{center}
  \caption{The 3 $\RSF$s for $a=(0,3,0,2)$, their descent sets (middle), and their weak descent compositions (bottom).}\label{fig:0302SF}
  \end{figure}

The \emph{standardization} of a reverse semi-skyline filling $T\in \RSSF(a)$, denoted $\std_{\RSSF}(T)\in \RSF(a)$, is obtained by replacing the $i^{th}$ smallest entry $i$ of $T$ by $i$ for all $1\le i \le n$, where if two entries are equal the entry appearing earlier in reading order is considered to be smaller. Since standardization preserves the relative order between entries, it is immediate that (RSSF1), (RSSF2) and (RSSF4) are preserved, hence $\std_{\RSSF}(T)\in \RSF(a)$.

The following lemma is proved in an identical manner to Lemma~\ref{lem:wdespreservedstd}

\begin{lemma}\label{lem:wdespreservedqkey}
Standardization preserves weak descent compositions; that is, if $\std_{\RSSF}(T)=S$ then $\wdes_{\RSSF}(T) = \wdes_{\RSF}(S)$.
\end{lemma}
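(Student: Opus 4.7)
The plan is to mirror the proof of Lemma~\ref{lem:wdespreservedstd} essentially verbatim, with only the descent condition changed from the $\SRIF$ one (``$i+1$ strictly above $i$'') to the $\RSF$ one (``$i+1$ weakly right of $i$''). The key observation is that both $\wdes_{\RSSF}$ and $\wdes_{\RSF}$ are constructed by the same three-step recipe --- list the entries in a canonical weakly decreasing order, cut that list into runs using a condition on the positions of consecutive entries, then read off the numbers $p_j$ from row indices --- so it suffices to show that standardization preserves each step.

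First I would list the entries of $T$ as $w_n, w_{n-1}, \ldots, w_1$ in weakly decreasing order, breaking ties by declaring the entry appearing earlier in reading order to be smaller. By Remark~\ref{rmk:wdessemistandardqkey1}, the run decomposition of $T$ is obtained by placing a bar between $w_{i+1}$ and $w_i$ exactly when the box occupied by $w_{i+1}$ lies weakly to the right of the box occupied by $w_i$ (this is the $\RSSF$ analogue of the bar rule, extending the $\RSF$ descent condition in the natural way so that Lemma~\ref{lem:bothsidesempty}-type statements and the standardization compatibility below hold). The crucial point is that, by the tiebreaking convention, the standardization procedure replaces $w_i$ by $i$ \emph{in the same box}; equivalently, the box holding $w_i$ in $T$ is precisely the box holding $i$ in $S = \std_{\RSSF}(T)$.

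Next I would use this box-preservation to translate each ingredient of Definition~\ref{def:wdesqkey}. The bar between $w_{i+1}$ and $w_i$ in the run decomposition of $T$ appears iff the box of $w_{i+1}$ is weakly right of the box of $w_i$, which, after standardization, is iff the box of $i+1$ in $S$ is weakly right of the box of $i$ in $S$, i.e.\ iff $i \in \des_{\RSF}(S)$. Hence the two run decompositions agree as partitions of the sequence into consecutive blocks. Since the boxes themselves are preserved, the row index of the box containing $w_n$ (respectively $n$) is the same in $T$ and in $S$, and the lowest row hit by the $j$th run is the same in $T$ and in $S$. Therefore the recursively-defined sequence $p_k, \ldots, p_1$ is identical for $T$ and $S$, and the resulting weak compositions $\wdes_{\RSSF}(T)$ and $\wdes_{\RSF}(S)$ coincide (with both being empty exactly when some $p_j$ is nonpositive).

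There is no serious obstacle here; the only delicate point is the tiebreaking convention. One must confirm that ``earlier in reading order is smaller'' is used consistently in both the standardization of $\RSSF$s and the semistandard bar rule, so that equal entries in $T$ get standardized in the same linear order that the bar rule uses to compare them. This is built into the definitions, so the verification is routine and the rest of the argument is the same boxwise bookkeeping used in Lemma~\ref{lem:wdespreservedstd}.
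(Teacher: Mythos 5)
Your proposal is correct and takes essentially the same approach as the paper, which simply states that this lemma is proved in an identical manner to Lemma~\ref{lem:wdespreservedstd}: order the entries weakly decreasingly with the reading-order tiebreak, observe that standardization places $i$ in the very box that held $w_i$, and conclude that the bar positions (now governed by the ``weakly right'' condition) and the row indices defining the $p_j$ are unchanged. Your extra remark about checking consistency of the tiebreaking convention is a reasonable bit of due diligence but, as you note, is built into the definitions.
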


We are now ready to establish that the fundamental slide expansion of a quasi-key polynomial is indexed by the standard reverse skyline fillings. 

\begin{lemma}\label{lem:stdeqqkey}
Let $a$ be a weak composition of length $\ell$. Then standardization partitions $\RSSF(a)$ into equivalence classes, in particular,
\[\RSSF(a) = \bigsqcup_{S\in \RSF(a)} \left\{T\in \RSSF(a) : \std_{\RSSF}(T) = S\right\}.\]
\end{lemma}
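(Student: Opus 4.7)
The proof is essentially a direct unpacking of the definition of standardization, completely analogous to Lemma~\ref{lem:stdeq}. My plan is to first argue existence and uniqueness of the standardization map at the level of individual elements, then assemble the disjoint union from this.

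For the first step, I would observe that the standardization procedure described just before the lemma assigns to each $T \in \RSSF(a)$ a specific filling with entries $1, 2, \ldots, n$: we simply replace the $i^{th}$ smallest entry by $i$, with ties broken by reading order. Since the reading order is a total order on the boxes of $D(a)$, there is no choice in this procedure, so $\std_{\RSSF}(T)$ is a uniquely determined filling.

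Next, I would verify that $\std_{\RSSF}(T) \in \RSF(a)$. This is the content of the sentence preceding the lemma statement: the standardization preserves weak order between any two entries (strict order is preserved strictly, equal entries are totally ordered via reading order). Hence weakly decreasing rows remain weakly decreasing (RSSF1), strict increase in the leftmost column and distinctness of column entries are preserved (RSSF2), and each triple inequality condition still holds with the same (or strict) direction, so all triples remain inversion triples (RSSF4). Note that (RSSF3) is not required of elements of $\RSF(a)$, so it does not need checking.

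Finally, to assemble the disjoint union, I would define the relation $T \sim T'$ on $\RSSF(a)$ by $\std_{\RSSF}(T) = \std_{\RSSF}(T')$. This is an equivalence relation whose classes are precisely the fibres $\{T \in \RSSF(a) : \std_{\RSSF}(T) = S\}$ for $S \in \RSF(a)$, giving the desired disjoint union decomposition of $\RSSF(a)$. I do not anticipate any real obstacle here, since no step requires more than invoking the definition of standardization and the remark already given in the text; the whole proof should be a single short paragraph.
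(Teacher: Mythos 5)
Your proposal is correct and follows essentially the same route as the paper, whose proof is the one-line observation that standardization assigns to each $T\in\RSSF(a)$ a unique $S\in\RSF(a)$ (with the verification that $\std_{\RSSF}(T)\in\RSF(a)$ already handled in the remark preceding the lemma, exactly as you note). Your additional unpacking of the fibre/equivalence-class structure is just a more explicit rendering of the same argument.
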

\begin{proof}
By the definition of standardization, given $T\in \RSSF(a)$ there is a unique $S\in \RSF(a)$ such that $\std_{\RSSF}(T)=S$.
\end{proof}

\begin{lemma}\label{lem:bothsidesemptyqkey}
Let $a$ be a weak composition and $S\in \RSF(a)$. Then $\wdes_{\RSF}(S)=\emptyset$ if and only if there is no $T\in \RSSF(a)$ that standardizes to $S$.
\end{lemma}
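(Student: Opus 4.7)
The plan is to mirror the proof of Lemma~\ref{lem:bothsidesempty}, adapting the arguments from the SRIF/SSRIF setting to the RSF/RSSF setting. The main structural differences to account for are the inversion triple condition (RSSF4), which has no analogue in Definition~\ref{def:SSRIF}, and that for RSSF/RSF columns merely have distinct entries rather than being strictly increasing from bottom to top.

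For the forward direction I would argue by contrapositive: show that $\wdes_{\RSSF}(T) \neq \emptyset$ for every $T \in \RSSF(a)$; the lemma then follows via Lemma~\ref{lem:wdespreservedqkey}. The key step is to establish that for each positive integer $i$, all copies of value $i$ in $T$ occupy the same run. In Lemma~\ref{lem:bothsidesempty} this used the SSRIF reading order (top to bottom, right to left), which forced the later-read equal entry to lie weakly below the earlier one. Here the reading order goes up columns from right to left, so I need to argue instead that among equal values the later-read one must sit strictly to the left of the earlier one, preventing a descent between them in the RSF sense and thus keeping them in the same run. This works precisely because (RSSF2) forces entries in any column to be distinct, so two equal values cannot share a column. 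Once this is in hand, the same counting argument as in Lemma~\ref{lem:bothsidesempty} shows run $r_j$ uses only values $\geq j$, and then (RSSF3) together with induction forces $p_j \geq j > 0$.

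For the converse, given $\wdes_{\RSF}(S) \neq \emptyset$, I would construct $T$ by replacing every entry of run $r_j$ of $S$ with the value $p_j$, as in Lemma~\ref{lem:bothsidesempty}. Conditions (RSSF1) and (RSSF3) follow by the same arguments as there. For (RSSF2) I would show that any two entries of $S$ sharing a column must lie in different runs: if consecutive labels $m$ and $m+1$ lie in the same run then by definition no RSF descent occurs at $m$, so $m+1$ is strictly to the left of $m$ in $S$, and iterating rules out same-column pairs in a common run. The main obstacle I expect is verifying the inversion triple condition (RSSF4) in $T$, which has no counterpart in the earlier proof; I would split into cases based on whether the inner corner $x$ of the triple lies in $D(a)$, and in each case the required strict comparison of the corresponding $p_j$ values reduces to showing that two entries of $S$ in the same column (specifically, the entries at $x$ and $y$ of the triple) must belong to different runs, which again follows from the column-based obstruction just described. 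Finally, $\std_{\RSSF}(T) = S$ follows from noting that within a single run of $S$, higher labels sit strictly to the left, hence come later in the RSSF reading order of $T$, so standardizing $T$ recovers the original labels run by run.
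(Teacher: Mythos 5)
Your overall strategy is the paper's own: adapt the proof of Lemma~\ref{lem:bothsidesempty}, supplying extra arguments only for the two conditions that have no analogue there, namely column-distinctness in (RSSF2) and the triple condition (RSSF4). Your forward direction is correct --- the observation that (RSSF2) forces equal entries into distinct columns, so the later-read one lies strictly to the left and hence no $\RSF$-descent separates consecutive equal values, is exactly the right replacement for the row-based argument --- and your treatment of (RSSF1), (RSSF2) and (RSSF3) in the converse matches the paper's.

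Your verification of (RSSF4), however, has a gap. You claim that in every case the needed inequality reduces to showing that the entries at $x$ and $y$ (which share a column) lie in different runs. That is the right comparison only when $y<z$: there the inversion condition in $S$ forces $y<x$ (so in particular the box $x$ is occupied), and the same-column argument gives $y'<x'$, which suffices. But when $y>z$ the inequality you must produce is $y'>z'$, and $y$ and $z$ do \emph{not} share a column. Indeed, if $x\notin D(a)$ there is no entry at $x$ at all, and if $x\in D(a)$ with $y>x$ the same-column argument only yields $y'>x'$, which is the wrong direction for the inversion condition. What saves this case is the other consequence of the principle you already stated: within a run each successive label sits strictly to the left of its predecessor, so if $z<y$ with $z$ strictly to the left of $y$ (as it is, being one column further left) they cannot lie in the same run; hence $z'<y'$ and the triple condition holds vacuously in $T$. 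This is precisely how the paper argues, splitting on $y>z$ versus $y<z$ rather than on whether $x\in D(a)$; your case split on the presence of the box $x$ does not line up with which pair of entries must be compared. The repair is immediate from tools already in your proposal, but as written the step would fail for every triple with $y>z$.
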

\begin{proof}
The proof that existence of a $T\in \RSSF(a)$ that standardizes to $S$ implies $\wdes_{\RSF}(S)\neq \emptyset$ is essentially identical to the argument in the first paragraph of the proof of Lemma~\ref{lem:bothsidesempty}. The converse direction is essentially the same as the argument in the second and third paragraphs of the proof of Lemma~\ref{lem:bothsidesempty}, using the same construction of $T$ as the filling obtained by replacing every entry of run $r_j$ of $S$ with the value $p_j$. However a few more details need to be shown to justify that the $T$ obtained is an $\RSSF$, specifically for (RSSF2) that columns of $T$ have no repeated entries, and for (RSSF4) that all triples in $T$ are inversion triples. We provide these details here. 

Any two entries in the same column of $S$ necessarily belong to different runs; this is immediate from the definition of descent set for $\RSF$'s. This means no two entries of any given column of $T$ correspond to same $p_j$ value. Since the $p_j$'s are distinct, all entries of any column of $T$ are distinct.

For (RSSF4), consider an inversion triple in $S$ (Figure~\ref{fig:reversetriples}).   If $y>z$, then $y$ and $z$ are in different runs since $z$ is to the left of $y$. Hence the entry replacing $z$ is strictly smaller than the entry replacing $y$, so this triple is still an inversion triple in $T$. If on the other hand we have $y<z$, then by the definition of an inversion triple we also have $y<x$. Since $y$ and $x$ are in the same column they must belong to different runs, with $y$ in a smaller-indexed run than the run containing $x$. Therefore, the entry replacing $y$ is strictly smaller than the entry replacing $x$, so this triple is still an inversion triple in $T$. Hence (RSSF4) is satisfied. 
\end{proof}

\begin{theorem}\label{thm:qkeytoslide}
Let $a$ be a weak composition of length $\ell$. Then
\[\qkey_{a} = \sum_{S\in \RSF(a)} \fs_{\wdes_{\RSF}(S)}.\]
\end{theorem}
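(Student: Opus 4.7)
The plan is to mirror the proof of Theorem~\ref{thm:rdistoslide}, replacing the role of $\SSRIF$/$\SRIF$ throughout with $\RSSF$/$\RSF$, and leveraging the supporting lemmas already established for the quasi-key setting. Starting from (\ref{eq:qkey}) together with Lemma~\ref{lem:stdeqqkey}, one has
\[
\qkey_a \;=\; \sum_{S \in \RSF(a)}\;\sum_{\substack{T \in \RSSF(a) \\ \std_{\RSSF}(T) = S}} x^{\wt(T)}.
\]
The goal is therefore to construct, for each $S \in \RSF(a)$, a weight-preserving bijection
\[
\Psi_S \colon \bigl\{T \in \RSSF(a) : \std_{\RSSF}(T) = S\bigr\} \;\longrightarrow\; \FSSF\bigl(\wdes_{\RSF}(S)\bigr).
\]
Once this is in hand, summing over $S \in \RSF(a)$ and applying the definition of $\fs_{\wdes_{\RSF}(S)}$ delivers the theorem.

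The degenerate case $\wdes_{\RSF}(S) = \emptyset$ is handled by Lemma~\ref{lem:bothsidesemptyqkey}: both sides of the bijection are empty simultaneously. For nonempty $\wdes_{\RSF}(S)$, I define $\Psi_S$ in exact analogy with the map in the proof of Theorem~\ref{thm:rdistoslide}: given $T$ standardizing to $S$, compute the runs $r_j$ and row indices $p_j$ of $T$ (Remark~\ref{rmk:wdessemistandardqkey1}), and place the entries of the $j^{\text{th}}$ run of $T$ into row $p_j$ of $\Psi_S(T)$, listed from largest to smallest (i.e., in reading order within the run). Lemma~\ref{lem:wdespreservedqkey} guarantees that the resulting shape is $\wdes_{\RSF}(S)$. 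The conditions (F1), (F2), (F3) for $\FSSF$ follow just as before: rows decrease because within a run entries are listed largest to smallest; rows are separated strictly because entries of different runs are distinct integers; and no entry in row $p_j$ exceeds $p_j$ by definition of the $p_j$.

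For the inverse map, given $K \in \FSSF(\wdes_{\RSF}(S))$, construct $T$ by replacing the entries of run $r_j$ in $S$ (in reading order, i.e.\ largest to smallest) with the entries of row $p_j$ of $K$ in left-to-right order. Conditions (RSSF1) and (RSSF3) follow exactly as in the proof of Theorem~\ref{thm:rdistoslide}, and (RSSF2) column-strictness is immediate from the fact (already used in Lemma~\ref{lem:bothsidesemptyqkey}) that any two entries of $S$ in the same column lie in different runs of $S$ and are therefore replaced by distinct integers. The main subtlety — and the step I expect to require the most care — is verifying the triple condition (RSSF4): given an inversion triple $(z, y, x)$ in $S$ as in Figure~\ref{fig:reversetriples}, one must show that after replacement the resulting triple in $T$ remains inversion. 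This is handled by splitting on the cases $y > z$ and $y \le z$ exactly as in the proof of Lemma~\ref{lem:bothsidesemptyqkey}: in the first case $z$ and $y$ lie in different runs so the strict inequality persists after replacement; in the second case the inversion condition forces $y < x$, and since $y$ and $x$ share a column they belong to different runs, so the strict inequality again persists.

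Finally, $\Psi_S$ and the inverse map are mutually inverse by construction (each is the unique way to distribute run entries into the prescribed rows), and standardization of the constructed $T$ recovers $S$ because the value assigned to each run strictly increases with $j$, preserving the run decomposition. This establishes the bijection and completes the proof.
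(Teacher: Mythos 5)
Your proposal is correct and follows essentially the same route as the paper: the paper likewise defines $\Psi_S$ exactly as in Theorem~\ref{thm:rdistoslide}, invokes Lemmas~\ref{lem:stdeqqkey}, \ref{lem:wdespreservedqkey} and \ref{lem:bothsidesemptyqkey}, and then supplies precisely the two extra verifications you identify for the inverse map, namely (RSSF2) via distinctness of runs within a column and (RSSF4) via the same case split on $y>z$ versus $y<z$ using that entries in the same column, or a larger entry to the right of a smaller one, must lie in different runs.
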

\begin{proof}
For $S\in \RSF(a)$, define a map 
\[\Psi_S: \left\{T\in \RSSF(a) : \std_{\RSSF}(T) = S\right\} \,\,\,\, \rightarrow \,\,\,\, \FSSF(\wdes_{\RSF}(S))\]
in the same way as in the proof of Theorem~\ref{thm:rdistoslide}. The proof then proceeds in an essentially identical manner to that of Theorem~\ref{thm:rdistoslide}, using Lemmas~\ref{lem:wdespreservedqkey} and \ref{lem:bothsidesemptyqkey}, however a few more details need to be shown to establish that the $T$ constructed from a given $S\in \RSF(a)$ and $K\in \FSSF(\wdes_{\RSF}(S))$ is in fact an $\RSSF$. Specifically, we need to show for (RSSF2) that columns of $T$ have no repeated entries, and for (RSSF4) that all triples in $T$ are inversion triples. We provide these details here.

Any two entries in the same column of $S$ necessarily belong to different runs; this is immediate from the definition of descent set for $\RSF$s. Entries in runs of $S$ are replaced by entries in rows of $K$, and entries in a given row of $K$ are strictly larger than all entries in any lower row of $K$, by (F2). Hence replacing runs of $S$ by rows of $K$ ensures that strict inequality between entries in the same column is preserved. Therefore, since $S$ (being standard) has no repeated entry in any column, $T$ also has no repeated entry in any column.

For (RSSF4), we need to ensure that replacing runs of $S$ with rows of $K$ doesn't cause any inversion triple in $S$ to become a non-inversion triple in $T$. There are two possibilities to consider. Given a triple of entries of $S$ with $y>z$ (see Figure~\ref{fig:reversetriples}), suppose $y'$ replaces $y$ and $z'$ replaces $z$ when constructing $T$. We claim that $y'>z'$, and thus the triple is an inversion triple in $T$.  To see this, observe that since $y>z$ and $y$ is to the right of $z$, $y$ and $z$ belong to different runs in $S$. Hence strict inequality between these two entries are preserved when the runs are replaced by rows of $K$.  
Now suppose we have an inversion triple in $S$ with $y<z$, i.e. also $y<x$. It is enough to show that $y'<x'$ in $T$. But this follows since $y$ and $x$ are in the same column, hence in different runs.  Again, this replacement preserves the strict inequality.  
\end{proof}

\section{Dual immaculate slide polynomials expand positively into Young quasi-key polynomials}\label{sec:distoyqk}

In this section, we work with the Young versions of reverse dual immaculate slide polynomials and quasi-key polynomials, in order to apply and extend the combinatorics developed by \cite{AHM18}. We give positive formulas for the decompositions of both (Young) dual immaculate slide polynomials and Young quasi-key polynomials into sums of Young fundamental slide polynomials.  From the former, we recover the formula of \cite{BBSSZ14} for the expansion of a dual immaculate quasisymmetric function into fundamental quasisymmetric functions. We then use our decomposition formulas, together with an insertion algorithm, to prove a positive formula for the expansion of dual immaculate slide polynomials into Young quasi-key polynomials. As a corollary, we obtain a similar formula for the expansion of reverse dual immaculate slide polynomials into quasi-key polynomials (which limits to an expansion of the reverse dual immaculate quasisymmetric functions into quasisymmetric Schur functions).  We also recover the formula of \cite{AHM18} for the expansion of a dual immaculate quasisymmetric function into Young quasisymmetric Schur functions (Theorem~\ref{thm:ditoyqs}) and establish that reverse dual immaculate slide polynomials expand positively in the Demazure atom basis of \cite{LasSch90} and~\cite{Mas09}.

\subsection{Young versions}
Given a weak composition $a$ of $n$ into $\ell$ parts, define the \emph{(Young) dual immaculate slide polynomial} $\dis_a$ to be the reverse dual immaculate slide polynomial for $\rev(a)$ with the variable set reversed, i.e.,
\begin{equation}\label{eq:dis}
\dis_a(x_1,\ldots , x_\ell) = \rdis_{\rev(a)}(x_\ell, \ldots , x_1). 
\end{equation}
For example,
\[\dis_{(2,0,1)}(x_1,x_2,x_3) = \rdis_{(1,0,2)}(x_3,x_2, x_1) = x^{201}+x^{111}+x^{102}+x^{021}+x^{012}.\] 

\begin{remark}
With Young families of polynomials, one must be careful regarding the length of the weak composition, in a way that isn't necessary for reverse families. Specifically, assuming that the number of variables equals the length of the weak composition, appending zeros to a weak composition does not change a reverse polynomial, but it does change a Young polynomial. For example, $\rdis_a = \rdis_{a\times 0^m}$ for any $m$, but $\dis_{a\times 0^m}$ is a different polynomial for every value of $m$.
\end{remark}

\begin{definition}{\label{def:SSIF}}
A \emph{semistandard immaculate filling} of weak composition shape $a$ is a filling of the boxes in the diagram of $a$ with positive integers satisfying the following properties.
\begin{enumerate}[leftmargin=1.8cm] 
\item[(SSIF1)] Row entries weakly increase from left to right.
\item[(SSIF2)] Entries in the leftmost column strictly increase from bottom to top.
\item[(SSIF3)] Entries in the $i^{th}$ row from the bottom are no smaller than $i$.
\end{enumerate}
A \emph{standard immaculate filling} of shape $a$ is a filling of $D(a)$ with $1, \ldots , n$ (each used once) that satisfies conditions (SSIF1) and (SSIF2) but not necessarily (SSIF3). Let $\SSIF(a)$ (respectively, $\SIF(a)$) denote the set of all semistandard (respectively, standard) immaculate fillings of shape $a$.
\end{definition}

The reading word of a semistandard or standard immaculate filling is given by reading the entries along rows from left to right, starting at the top row and proceeding downwards. This order on entries is called the \emph{reading order}. For $a$ a weak composition of $n$ of length $\ell$, notice the reading words of $\SSIF(a)$ are exactly the reading words of $\SSRIF(\rev(a))$, reversed and with entries $i$ replaced by $\ell+1-i$, and the reading words of $\SIF(a)$ are exactly the reading words of $\SRIF(\rev(a))$, reversed and with entries $i$ replaced by $n+1-i$. The \emph{descent set} of $S\in \SIF(a)$, denoted $\des_{\SIF}(a)$, is set of all $i$ such that $i+1$ is in a strictly higher row.  (Note that this is the same as the descent set for a $\SRIF$.)  

We also define weak descent compositions for standard immaculate fillings. 
\begin{definition}\label{def:wdesdis}
Let $a$ be a weak composition of $n$ of length $\ell$. The \emph{weak descent composition} of $S\in \SIF(a)$, denoted $\wdes_{\SIF}(S)$, is the weak composition of length $\ell$ obtained as follows. First, decompose the word $1 \, 2 \ldots n-1 \, n$ into runs by placing a bar between $i$ and $i+1$ whenever $i\in \des_{\SIF}(S)$. Suppose there are $k$ runs, i.e., the run decomposition yields $r_1|r_2|\ldots |r_k$. 

Define a strictly increasing sequence of numbers $p_1, p_2, \ldots , p_k$ recursively as follows. Let $p_1$ be the row index of the box containing the entry $1$. Now let $j>1$ and suppose $p_1, \ldots , p_{j-1}$ are known. Define $p_j$ to be the larger of $p_{j-1}+1$ and the index of the highest row containing an entry of the $j^{th}$ run $r_j$. If any $p_j$ is strictly greater than $\ell$, then $\wdes_{\SIF}(S)=\emptyset$. Otherwise $\wdes_{\SIF}(S)$ is the weak composition of length $\ell$ whose $p_j^{th}$ part is the number of entries in the run $r_j$, and whose other parts are all zero.
\end{definition}

Notice the symmetry between Definition~\ref{def:wdesdis} of weak descent composition for $S\in \SIF(a)$ and Definition~\ref{def:wdesrdis} of weak descent composition for $S\in \SRIF(a)$.

\begin{ex}
Let $a=(2,0,3,0)$. The $\SIF$s for $a=(2,0,3,0)$ and their weak descent compositions are in Figure~\ref{fig:2030SIF1}. Compare to Figure~\ref{fig:0302slide} which shows the $\SRIF$s for $\rev(a)=(0,3,0,2)$.
\end{ex}

\begin{figure}[ht]
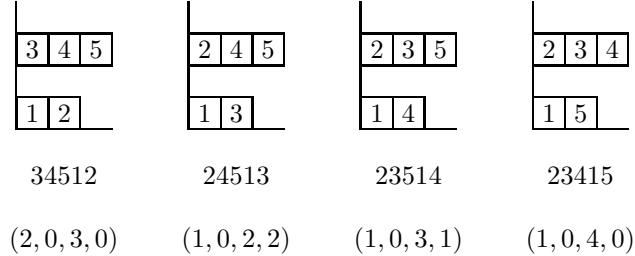

  \begin{center}
    \begin{displaymath}
      \begin{array}{c@{\hskip2\cellsize}c@{\hskip2\cellsize}c@{\hskip2\cellsize}c}
        \vline\tableau{ \\  3 & 4 & 5 \\ \\  1 & 2  \\\hline } &
        \vline\tableau{ \\  2 & 4 & 5 \\ \\  1 & 3  \\\hline } &
        \vline\tableau{ \\  2 & 3 & 5 \\ \\  1 & 4  \\\hline } &
        \vline\tableau{ \\  2 & 3 & 4 \\ \\  1 & 5  \\\hline } \\ \\
         34512 & 24513 & 23514 & 23415 \\ \\
        (2,0,3,0) & (1,0,2,2) & (1,0,3,1) & (1,0,4,0)
                     \end{array}
    \end{displaymath}
  \end{center}
  \caption{The 4 $\SIF$s for $a=(2,0,3,0)$, their reading words, and their weak descent compositions.}\label{fig:2030SIF1}
  \end{figure}

Given a weak composition $a$ of length $\ell$, we define a ``Young'' version, $\yfs_a$, of the fundamental slide polynomial $\fs_a$ by setting 
\begin{equation}\label{eq:yfs}
\yfs_a(x_1,\ldots , x_\ell) = \fs_{\rev(a)}(x_\ell, \ldots , x_1).
\end{equation}

We now describe the decomposition of the dual immaculate slide polynomials into Young fundamental slide polynomials.

\begin{theorem}\label{thm:ydistoyslide}
Let $a$ be a weak composition of $n$, of length $\ell$. Then
\[\dis_{a} = \sum_{S\in \SIF(a)} \yfs_{\wdes_{\SIF}(S)}.\]
\end{theorem}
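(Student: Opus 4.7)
The plan is to deduce Theorem~\ref{thm:ydistoyslide} from Theorem~\ref{thm:rdistoslide} by using the defining identities $\dis_a(x_1,\ldots,x_\ell) = \rdis_{\rev(a)}(x_\ell,\ldots,x_1)$ and $\yfs_a(x_1,\ldots,x_\ell) = \fs_{\rev(a)}(x_\ell,\ldots,x_1)$, together with a weight-compatible bijection $\phi\colon \SIF(a) \to \SRIF(\rev(a))$ that sends $\wdes_{\SIF}$ to the reverse of $\wdes_{\SRIF}$. First, I would define $\phi$ by the map that sends $S \in \SIF(a)$ to the filling $S' \in \SRIF(\rev(a))$ obtained by replacing every entry $i$ of $S$ with $n+1-i$ and reversing the order of the rows. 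A direct check shows that (SSIF1), (SSIF2) of Definition~\ref{def:SSIF} translate into (SSRIF1), (SSRIF2) of Definition~\ref{def:SSRIF}, and that $\phi$ is an involutive bijection.

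Next, I would verify the key compatibility $\wdes_{\SIF}(S) = \rev(\wdes_{\SRIF}(\phi(S)))$ as weak compositions of length $\ell$. If $r(j)$ denotes the row containing entry $j$ in $S$, then entry $n+1-j$ sits in row $\ell+1-r(j)$ of $S'$. Thus $i \in \des_{\SIF}(S)$ iff $r(i+1) > r(i)$ iff $\ell+1-r(i+1) < \ell+1-r(i)$ iff $n-i \in \des_{\SRIF}(S')$. Consequently, the run decomposition $r_1 \mid r_2 \mid \cdots \mid r_k$ of $1\,2\,\ldots n$ used in Definition~\ref{def:wdesdis} corresponds, under the substitution $j \mapsto n+1-j$, to the run decomposition $r'_k \mid r'_{k-1} \mid \cdots \mid r'_1$ of $n\,n-1\,\ldots 1$ used in Definition~\ref{def:wdesrdis}, with $|r_j| = |r'_{k+1-j}|$.

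The main technical step is then to prove inductively that the recursive row indices $p_j$ (from $\wdes_{\SIF}$) and $p'_j$ (from $\wdes_{\SRIF}$) satisfy $p_j = \ell+1 - p'_{k+1-j}$. The base case $p_1 = \ell+1-p'_k$ is immediate since $1$ occupies row $p_1$ in $S$ iff $n$ occupies row $\ell+1-p_1$ in $S'$. For the inductive step, the highest row of $S$ containing an entry of $r_j$ equals $\ell+1$ minus the lowest row of $S'$ containing an entry of $r'_{k+1-j}$, so
\[
p'_{k+1-j} = \min\bigl(p'_{k+2-j}-1,\; \ell+1-h_j\bigr) = \min\bigl(\ell-p_{j-1},\; \ell+1-h_j\bigr),
\]
which gives $\ell+1-p'_{k+1-j} = \max(p_{j-1}+1,h_j) = p_j$, matching Definition~\ref{def:wdesdis}. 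The emptiness condition ($p_j > \ell$) then corresponds precisely to the emptiness condition ($p'_{k+1-j} \le 0$) on the reverse side. Placing $|r_j|$ in position $p_j$ on one side equals placing $|r'_{k+1-j}|$ in position $\ell+1-p_j$ on the other, giving $\wdes_{\SIF}(S) = \rev(\wdes_{\SRIF}(\phi(S)))$.

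Finally, I would chain these facts together:
\begin{align*}
\dis_a(x_1,\ldots,x_\ell)
&= \rdis_{\rev(a)}(x_\ell,\ldots,x_1) \\
&= \sum_{S' \in \SRIF(\rev(a))} \fs_{\wdes_{\SRIF}(S')}(x_\ell,\ldots,x_1) \\
&= \sum_{S' \in \SRIF(\rev(a))} \yfs_{\rev(\wdes_{\SRIF}(S'))}(x_1,\ldots,x_\ell) \\
&= \sum_{S \in \SIF(a)} \yfs_{\wdes_{\SIF}(S)}(x_1,\ldots,x_\ell),
\end{align*}
using the definition of $\dis$, Theorem~\ref{thm:rdistoslide}, the definition of $\yfs$, and the bijection $\phi$ together with the index compatibility, respectively. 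The main obstacle is the recursive bookkeeping of the $p_j$ and $p'_j$ values; once that identification is pinned down the rest is a formal manipulation of the defining reversals.
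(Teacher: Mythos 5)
Your proposal is correct and follows essentially the same route as the paper's proof: the row-reversing, entry-complementing involution $\theta\colon \SIF(a)\to\SRIF(\rev(a))$, the observation that it reverses weak descent compositions, and the same chain of identities through $\rdis_{\rev(a)}$ and Theorem~\ref{thm:rdistoslide}. The only difference is that you spell out the inductive verification of $p_j = \ell+1-p'_{k+1-j}$, which the paper leaves to ``the symmetry between the definitions of $\wdes_{\SIF}$ and $\wdes_{\SRIF}$.''
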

\begin{proof}
The map $\theta:\SIF(a)\rightarrow \SRIF(\rev(a))$ which reverses the order of the rows and replaces each entry $i$ with $n+1-i$ is an involution; cf. Lemma~\ref{lem:SITSRITbij}. Moreover, it is straightforward that a consecutive sequence $i, i+1, \ldots j$ is a run in $S\in\SIF(a)$ if and only if $n+1-j, \ldots , n-i, n-i+1$ is a run in $\theta(S)$.  Similarly, if the highest entry in this run in $S$ is in row $r$, then the lowest entry in the corresponding run in $\theta(S)$ is in row $\ell+1-r$. It then follows from the symmetry between the definitions of $\wdes_{\SIF}$ and $\wdes_{\SRIF}$ that this involution is $\wdes$-reversing. Therefore,
\begin{align*}
\dis_a(x_1,\ldots , x_\ell) & = \rdis_{\rev(a)}(x_\ell, \ldots , x_1) \\
                                       & = \sum_{S\in \SRIF(\rev(a))}\fs_{\rev(\wdes_{\SRIF}(S))}(x_\ell, \ldots , x_1) \\
                                       & = \sum_{S\in \SIF(a)}\yfs_{\wdes_{\SIF}(S)}(x_1,\ldots , x_\ell).
\end{align*}   
\end{proof}

A similar argument to the proof of Proposition~\ref{prop:rdisQS} establishes the following:

\begin{proposition}\label{prop:disQS}
Let $a$ be a weak composition of length $\ell$. Then $\dis_a$ is quasisymmetric in $x_1, \ldots, x_\ell$ if and only if $a$ has no zero entry to the left of a nonzero entry. In this case, 
\[\dis_a = \di_{\flatten(a)}(x_1, \ldots , x_\ell).\]
\end{proposition}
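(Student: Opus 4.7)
The plan is to deduce this proposition from Proposition~\ref{prop:rdisQS} by exploiting the defining relation \eqref{eq:dis} between $\dis_a$ and $\rdis_{\rev(a)}$, together with Proposition~\ref{prop:ditordi}, which is the analogous reversal identity for the quasisymmetric functions $\di_\alpha$ and $\rdi_{\rev(\alpha)}$. All the nontrivial combinatorial content is already packaged in these two results; the proof is then essentially a bookkeeping exercise tracking how reversal interacts with quasisymmetry, with $\flatten$, and with variable substitution.

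First, I would record the elementary observation that quasisymmetry is preserved under reversing the variable set: a polynomial $p(x_1,\ldots,x_\ell)$ is quasisymmetric in $x_1,\ldots,x_\ell$ if and only if $p(x_\ell,\ldots,x_1)$ is. This follows directly from the definition, since the map $(i_1<\cdots<i_k)\mapsto (\ell+1-i_k<\cdots<\ell+1-i_1)$ is a bijection on strictly increasing sequences in $\{1,\ldots,\ell\}$, and so the quasisymmetry condition on $p(x_\ell,\ldots,x_1)$ reduces to the quasisymmetry condition on $p$ (up to relabelling the indexing composition in reverse).

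Next, set $b=\rev(a)$, so that by \eqref{eq:dis} we have $\dis_a(x_1,\ldots,x_\ell)=\rdis_b(x_\ell,\ldots,x_1)$. Combined with the preceding observation, $\dis_a$ is quasisymmetric in $x_1,\ldots,x_\ell$ if and only if $\rdis_b$ is. Proposition~\ref{prop:rdisQS} characterizes the latter as $b=\rev(a)$ having no zero to the right of a nonzero entry, which is manifestly equivalent to $a$ having no zero to the left of a nonzero entry. This establishes the first assertion.

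Finally, assuming $a$ has this form, Proposition~\ref{prop:rdisQS} yields $\rdis_b(x_1,\ldots,x_\ell)=\rdi_{\flatten(b)}(x_1,\ldots,x_\ell)$, hence $\dis_a(x_1,\ldots,x_\ell)=\rdi_{\flatten(b)}(x_\ell,\ldots,x_1)$. Observing that $\flatten(\rev(a))=\rev(\flatten(a))$, and applying Proposition~\ref{prop:ditordi} with $\alpha=\flatten(a)$, the right-hand side equals $\di_{\flatten(a)}(x_1,\ldots,x_\ell)$, completing the proof. There is no real obstacle; the only thing to be careful about is that $\dis_a$ depends on the length $\ell$ of $a$, so one must consistently track $\ell$ through the substitutions. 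As an alternative route (closer to the spirit of the proof of Proposition~\ref{prop:rdisQS}), one could instead construct a direct weight-preserving bijection $\SSIT_\ell(\flatten(a))\to\SSIF(a)$ that inserts empty rows in the positions where $a$ has a zero; when $a$'s zeros all appear after its nonzero parts, this insertion only moves rows upward, so the $\SSIT$ row-index condition of Definition~\ref{def:SSIT} (entries in row $i$ from the bottom are at least $i$, forced by strict increase in the first column) immediately implies condition (SSIF3) for the image, and the converse direction is witnessed by exhibiting the monomial $x^a$ in $\dis_a$ and noting that no $\SSIF(a)$ supplies the $x^{s_i(a)}$ monomial when $a_i=0$ but $a_{i+1}\neq 0$.
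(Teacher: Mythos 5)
Your proof is correct, but it takes a different route from the paper's. The paper disposes of Proposition~\ref{prop:disQS} by asserting that ``a similar argument to the proof of Proposition~\ref{prop:rdisQS}'' works --- that is, it intends a direct combinatorial argument in the Young paradigm: a weight-preserving bijection $\SSIT_\ell(\flatten(a))\to\SSIF(a)$ when the zeros of $a$ all follow its nonzero parts (so that (SSIF3) comes for free from the strictly increasing first column), together with the monomial argument ($x^a$ appears but $x^{s_i(a)}$ does not) for the converse. You instead deduce the statement formally from the already-proven reverse case, using the defining relation \eqref{eq:dis}, the identity $\flatten(\rev(a))=\rev(\flatten(a))$, Proposition~\ref{prop:ditordi}, and the elementary observation that quasisymmetry of a polynomial in $x_1,\ldots,x_\ell$ is invariant under the substitution $x_i\mapsto x_{\ell+1-i}$. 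All of these steps check out, and your route has the advantage of not repeating any combinatorics --- everything is inherited from Proposition~\ref{prop:rdisQS} through the reversal dictionary --- at the cost of being less self-contained and less explicit about the underlying tableau bijection. One small imprecision in your closing alternative sketch: when $a$ has all its zeros after its nonzero parts, the empty rows of $D(a)$ sit at the \emph{top} of the diagram, so the filled rows of the $\SSIT$ do not move at all under the insertion of empty rows (if they moved upward, condition (SSIF3), which requires entries in row $i$ to be at least $i$, would become \emph{harder} to satisfy, not easier). This does not affect your main argument, which stands on the reduction to Proposition~\ref{prop:rdisQS}.
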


\begin{remark}\label{rmk:noyoungstability}
As alluded to in the introduction, there are certain notions that work well with either the Young version or the reverse 
version of a family of polynomials, but not both. Illustrating this, there is no analogue of Theorem~\ref{thm:stablelimit} for the dual immaculate slide polynomials. One can define a stable limit of $\dis_a$ by \emph{appending} $m$ zeros to $a$ and letting $m\to \infty$, however this does not recover $\di_{\flatten(a)}$ unless $a$ has no zero entry to the left of a nonzero entry. In that case $\dis_a$ was already equal to $\di_{\flatten(a)}(x_1, \ldots , x_\ell)$ by Proposition~\ref{prop:disQS}. 
\end{remark}

Despite Remark~\ref{rmk:noyoungstability}, we may still recover the Young analogue of Corollary~\ref{cor:stablelimit} from Theorem~\ref{thm:ydistoyslide}.  Corollary~\ref{cor:dislift} below (the Young analogue of Corollary~\ref{thm:stablelimit}) is exactly Proposition~\ref{prop:ditof}, proved in \cite{BBSSZ14}, on the expansion of dual immaculate quasisymmetric functions into fundamental quasisymmetric functions. We include it as a corollary since it establishes that the finite-variable version of Proposition~\ref{prop:ditof} is a special case of Theorem~\ref{thm:ydistoyslide}, and therefore Theorem~\ref{thm:ydistoyslide} extends this expansion to the polynomial ring.

\begin{cor}\label{cor:dislift}
Let $\alpha$ be a composition. Then for any positive integer $m$ greater than or equal to the number of parts of $\alpha$, we obtain the formula \[\di_{\alpha}(x_1, \ldots , x_m) = \sum_{S\in \SIT(\alpha)}F_{\des_{\SIT}(S)}(x_1,\ldots , x_m)\]
as a special case of Theorem~\ref{thm:ydistoyslide}. 
Then, letting $m \rightarrow \infty$ recovers Proposition~\ref{prop:ditof}.
\end{cor}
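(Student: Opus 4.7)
The plan is to specialize Theorem~\ref{thm:ydistoyslide} to the weak composition $a = \alpha \times 0^{m - \ell(\alpha)}$ of length $m$, obtained by appending $m - \ell(\alpha)$ trailing zeros to $\alpha$. Since $a$ has no zero entry to the left of a nonzero entry, Proposition~\ref{prop:disQS} gives $\dis_a = \di_\alpha(x_1, \ldots, x_m)$, while Theorem~\ref{thm:ydistoyslide} yields $\dis_a = \sum_{S \in \SIF(a)} \yfs_{\wdes_{\SIF}(S)}$. The fillings in $\SIF(a)$ are in evident bijection with $\SIT(\alpha)$: since rows $\ell(\alpha)+1, \ldots, m$ of $D(a)$ are empty, an $\SIF$ of shape $a$ is determined by its restriction to the first $\ell(\alpha)$ rows, which is precisely an $\SIT$ of shape $\alpha$. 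It therefore suffices to match the contribution of each $S \in \SIF(a)$ (via $\yfs_{\wdes_{\SIF}(S)}(x_1, \ldots, x_m)$) with that of the corresponding $S' \in \SIT(\alpha)$ (namely $F_{\des_{\SIT}(S')}(x_1, \ldots, x_m)$).

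The crux is the structural lemma: for each $S \in \SIF(a)$ and each run $r_j$ in the run decomposition of $S$, the highest row index $h_j$ containing an entry of $r_j$ satisfies $h_j \le j$. To prove this, first note that since entries in the leftmost column strictly increase from bottom to top, the smallest entry in row $r$ is at least $r$; consequently rows are first visited (in the row sequence of $1, 2, \ldots, n$) in the order $1, 2, 3, \ldots$ Let $e_j$ denote the first entry of $r_j$; by definition exactly $j - 1$ descents occur among the entries $1, 2, \ldots, e_j - 1$. Each first-visit to a row $r \ge 2$ requires a descent (strictly increasing into a never-before-seen row), so if $M$ rows have been visited by time $e_j$, then the first-visit descents alone account for $M - 1$ of the $j - 1$ descents, whence $M \le j$. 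Since rows weakly decrease within a run, $h_j$ is the row of $e_j$ itself, and this row is one of the visited rows, giving $h_j \le M \le j$.

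From $h_j \le j$, the recursion $p_j = \max(p_{j-1} + 1, h_j)$ forces $p_j = j$ inductively (using $p_1 = 1$, since entry $1$ occupies the bottom-left cell). Let $k = |\des_{\SIT}(S')| + 1$ be the number of runs. If $k \le m$, then $\wdes_{\SIF}(S)$ equals the descent composition of $S'$ padded with $m - k$ trailing zeros. The stabilization result of \cite{Assaf.Searles:1} for fundamental slide polynomials, translated across the reversal (\ref{eq:yfs}), gives $\yfs_{\beta \times 0^{m - \ell(\beta)}}(x_1, \ldots, x_m) = F_\beta(x_1, \ldots, x_m)$ for any composition $\beta$ of length at most $m$; applying this with $\beta = \des_{\SIT}(S')$ yields the desired equality. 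If instead $k > m$, then $p_k > m$ forces $\wdes_{\SIF}(S) = \emptyset$, while simultaneously $F_{\des_{\SIT}(S')}(x_1, \ldots, x_m) = 0$ since its indexing composition has length greater than the number of variables; both sides vanish, consistent with the bijection.

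Combining these ingredients establishes the finite-variable formula $\di_\alpha(x_1, \ldots, x_m) = \sum_{S \in \SIT(\alpha)} F_{\des_{\SIT}(S)}(x_1, \ldots, x_m)$ for every $m \ge \ell(\alpha)$. Letting $m \to \infty$ and using that each $F_\beta(x_1, \ldots, x_m)$ stabilizes to the fundamental quasisymmetric function $F_\beta$ then recovers Proposition~\ref{prop:ditof}. I expect the main obstacle to be the structural lemma $h_j \le j$; everything else amounts to unpacking definitions, invoking the bijection $\SIF(a) \leftrightarrow \SIT(\alpha)$, and citing Proposition~\ref{prop:disQS} together with the known stabilization of $\fs$ to $F$.
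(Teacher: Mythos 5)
Your proposal is correct and follows the same skeleton as the paper's proof: specialize Theorem~\ref{thm:ydistoyslide} to $a=\alpha\times 0^{m-\ell(\alpha)}$, identify the left-hand side via Proposition~\ref{prop:disQS}, biject $\SIF(a)$ with $\SIT(\alpha)$ by ignoring empty rows, show that $\wdes_{\SIF}(S)$ is the descent composition padded with trailing zeros, and invoke the stabilization $\yfs_{\beta\times 0^{m-\ell(\beta)}}(x_1,\ldots,x_m)=F_{\beta}(x_1,\ldots,x_m)$ coming from \cite{Assaf.Searles:1}.

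The one place you genuinely diverge is the justification that $p_j=j$ for every run. The paper disposes of this with the remark that every entry of $S$ is at least the index of the row it occupies, whereas you prove the sharper statement $h_j\le j$ (the highest row met by run $r_j$ is at most $j$) by counting first-visit descents: each new row entered contributes a descent, so at most $j-1$ rows beyond the first can have been visited before run $r_j$ begins. Your argument is the more complete one. The paper's observation only bounds the row of the first entry $e_j$ of $r_j$ by $e_j$ itself, which is not the bound by $j$ that the recursion $p_j=\max(p_{j-1}+1,h_j)$ requires; the descent-counting step you supply is exactly what is needed to close that gap. You also explicitly treat the edge case in which the number of runs exceeds $m$, checking that $\wdes_{\SIF}(S)=\emptyset$ matches the vanishing of $F_{\des_{\SIT}(S')}(x_1,\ldots,x_m)$; the paper passes over this silently. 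Both refinements are welcome; neither changes the route, only its rigor.
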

\begin{proof}
From Theorem~\ref{thm:ydistoyslide}, we have
\begin{equation}\label{eqn:distoyfs}
\dis_{\alpha \times 0^{m-\ell(\alpha)}} = \sum_{S\in \SIF(\alpha \times 0^{m-\ell(\alpha)}) }\yfs_{\wdes_{\SIF}(S)}.
\end{equation}
Note that $\alpha \times 0^{m-\ell(\alpha)}$ is the weak composition of length $m$ obtained by appending the appropriate number of zeros to $\alpha$.

By Proposition~\ref{prop:disQS}, the left hand side of (\ref{eqn:distoyfs}) is equal to $\di_{\alpha}(x_1, \ldots , x_m)$. For the right hand side, first note that $\SIF(\alpha \times 0^{m-\ell(\alpha)})$ is clearly in (descent set-preserving) bijection with $\SIT(\alpha)$ for any $m$, simply by ignoring the $m-\ell(\alpha)$ empty rows above the shape of $\alpha$. 

Next, we show that for $S\in \SIF(\alpha \times 0^{m-\ell(\alpha)})$, the weak descent composition $\wdes_{\SIF}(S)$ consists of the descent composition of $S$, with zeros appended to make it a weak composition of length $m$. To see this, suppose $S$ has $k$ runs, i.e., the descent composition of $S$ has length $k$. Recall the nonzero parts of $\wdes_{\SIF}(S)$ are by definition exactly the lengths of the increasing runs in $S$ (between descents). Now note that by the increasing first column condition, the first entry of every row of $S$ is at least the index of that row, and then by the increasing row condition we have that every entry in $S$ is at least the index of the row it occupies. It follows that $p_1=1$ and $p_j = p_{j-1}+1$ for every $1<j\le k$, i.e., the first $k$ entries of $\wdes_{\SIF}(S)$ are exactly the descent composition of $S$, and the remaining entries are zero.

Finally, for any composition $\beta$, we have 
\begin{align*}
\yfs_{\beta\times 0^{m-\ell(\beta)}}(x_1, \ldots, x_m) & = \fs_{0^{m-\ell(\beta)}\times \rev(\beta)} (x_m, \ldots, x_1) \\
									     & = F_{\rev(\beta)}(x_m, \ldots , x_1) \\ 
									     & = F_{\beta}(x_1, \ldots , x_m)
\end{align*}
where the first equality is by definition of $\yfs$, the second is proved in \cite{Assaf.Searles:1}, and the third is immediate from the definition of fundamental quasisymmetric polynomials. In particular, letting $\beta$ be the descent composition of $S$, we have that the right hand side of (\ref{eqn:distoyfs}) is equal to $\sum_{S\in \SIT(\alpha)}F_{\des_{\SIT}(S)}(x_1,\ldots , x_m)$, as required.
\end{proof}

One may analogously define a ``Young'' version of the quasi-key polynomial. Let $a$ be a weak composition of length $\ell$. The \emph{Young quasi-key polynomial} $\yqk_a$ is defined by
\begin{align}{\label{eq:yqktoqk}}
\yqk_a(x_1, \ldots , x_\ell) &= \qkey_{\rev(a)}(x_\ell , \ldots , x_1).
\end{align}
We note that if $a$ is in fact a composition, then (\ref{eq:yqktoqk}) reduces to the relationship between Young quasisymmetric Schur polynomials and quasisymmetric Schur polynomials.

We define an increasing skyline fillings model for Young quasi-key polynomials, which we will use for our weak insertion algorithm. This requires the concept of a Young triple. Let $a$ be a weak composition. A \emph{Young triple} in $D(a)$ is a collection of three boxes in the positive quadrant with two adjacent in a row, and the third box in the same column as the box on the right but in a lower row, possibly with boxes or empty spaces between. 
\begin{figure}[ht]
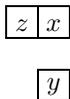

  \begin{displaymath}
    \begin{array}{l}
       \tableau{ z & x } \\ \\ \tableau{   & y }
    \end{array}
  \end{displaymath}
  \caption{Young triples for weak composition diagrams.}\label{fig:triples}
\end{figure}

To be a Young triple in $D(a)$, the boxes labeled $z$ and $y$ must be in $D(a)$, but the box labeled $x$ does not have to be. If the box labeled $x$ in the Young triple does not exist in $D(a)$, it is assumed to have label $\infty$ for the purpose of deciding whether the triple is an inversion triple.  Given any filling of the boxes of $D(a)$ with integers, a Young triple in $D(a)$ is said to be an \emph{inversion triple} if whenever $y\ge z$, we have $y>x$. 

\begin{definition}{\label{def:YSSF}}
A \emph{Young semi-skyline filling} of shape $a$ is a filling of the boxes of $D(a)$ with positive integers such that 
\begin{enumerate}[leftmargin=1.8cm]
\item[(YSSF1)] Row entries weakly increase from left to right
\item[(YSSF2)] Entries in the leftmost column strictly increase from bottom to top, and entries in any column are distinct
\item[(YSSF3)] No entry in row $i$ is smaller than $i$
\item[(YSSF4)] All Young triples are inversion triples.
\end{enumerate}
A \emph{Young standard skyline filling} of shape $a$ is a filling of $D(a)$ with $1, \ldots , n$ (each used once) that satisfies (YSSF1), (YSSF2) and (YSSF4), but not necessarily (YSSF3). Let $\YSSF(a)$ (respectively, $\YSF(a)$) denote the set of all Young semi-skyline fillings (respectively, Young standard skyline fillings) of shape $a$. 
\end{definition}

\begin{proposition}
Let $a$ be a weak composition of length $\ell$. Then
\[\yqk_a = \sum_{T\in \YSSF(a)}x^{\wt(T)}.\]
\end{proposition}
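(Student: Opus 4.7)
The plan is to deduce this proposition from the existing formula (\ref{eq:qkey}) for the ordinary quasi-key polynomials via a weight-reversing bijection between $\YSSF(a)$ and $\RSSF(\rev(a))$, mimicking the relation (\ref{eq:yqktoqk}) defining $\yqk_a$ from $\qkey_{\rev(a)}$.

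First, I would define a map $\Phi:\RSSF(\rev(a))\to\YSSF(a)$ that reverses the order of the rows of a filling of $D(\rev(a))$ and replaces each entry $i$ with $\ell+1-i$; the inverse procedure gives a map back. This clearly sends $D(\rev(a))$ to $D(a)$ and is weight-reversing in the sense that if $T$ has weight $(w_1,\ldots,w_\ell)$ then $\Phi(T)$ has weight $(w_\ell,\ldots,w_1)$.

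Next, I would verify that $\Phi$ carries each defining condition of an $\RSSF$ to the corresponding condition of a $\YSSF$. Conditions (RSSF1), (RSSF2), (RSSF3) translate to (YSSF1), (YSSF2), (YSSF3) in a routine manner: weakly decreasing rows become weakly increasing, the strictly increasing leftmost column and column-distinctness are preserved, and the row-index bound $\mathrm{entry}\le i$ in row $i$ of $D(\rev(a))$ becomes the bound $\mathrm{entry}\ge \ell+1-i$ in row $\ell+1-i$ of $D(a)$, i.e., (YSSF3). The main point to check is the triple condition (RSSF4)$\leftrightarrow$(YSSF4). A triple in $D(\rev(a))$ has its two adjacent boxes (labeled $z,x$) in a lower row and the third box $y$ in a higher row, whereas a Young triple has the two adjacent boxes $(z_Y,x_Y)$ in the upper row and the lone box $y_Y$ in the lower row; reversing rows precisely swaps these configurations. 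Under $\Phi$, the triple entries transform as $z_Y=\ell+1-z$, $x_Y=\ell+1-x$, $y_Y=\ell+1-y$, so the $\RSSF$ inversion condition ``$y\le z\Rightarrow y<x$'' is equivalent to the Young inversion condition ``$y_Y\ge z_Y\Rightarrow y_Y>x_Y$.'' The boundary case when the $x$-box is absent corresponds consistently: $x=0$ forces $y>z$ strictly in the $\RSSF$ case, while $x_Y=\infty$ forces $y_Y<z_Y$ strictly in the Young case, and these conditions are equivalent under $i\mapsto\ell+1-i$.

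Once $\Phi$ is shown to be a weight-reversing bijection, the proposition follows by combining (\ref{eq:yqktoqk}) and (\ref{eq:qkey}):
\[
\sum_{T\in \YSSF(a)}x^{\wt(T)}(x_1,\ldots,x_\ell)
=\sum_{T'\in \RSSF(\rev(a))}x^{\wt(T')}(x_\ell,\ldots,x_1)
=\qkey_{\rev(a)}(x_\ell,\ldots,x_1)=\yqk_a(x_1,\ldots,x_\ell).
\]
The main obstacle is a careful case analysis of the triple-condition correspondence, in particular handling the asymmetry between the $0$ convention for missing $x$ in reverse triples and the $\infty$ convention for missing $x$ in Young triples; everything else is bookkeeping.
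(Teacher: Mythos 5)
Your proposal is correct and follows essentially the same route as the paper: the paper's proof simply asserts that row-reversal combined with $i\mapsto\ell+1-i$ is a weight-reversing involution between $\YSSF(a)$ and $\RSSF(\rev(a))$ and then invokes the defining formulas (\ref{eq:qkey}) and (\ref{eq:yqktoqk}). Your more detailed verification of the triple-condition correspondence (including the $0$ versus $\infty$ convention for the missing box) is accurate but is left implicit in the paper.
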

\begin{proof}
The map between $\YSSF(a)$ and $\RSSF(\rev(a))$ which reverses the order of the rows and replaces each entry $i$ with $\ell+1-i$, where $\ell$ is the length of $a$, is a weight-reversing involution. The formula then follows from the definition of quasi-key polynomials in terms of $\RSSF$s and the definition (\ref{eq:yqktoqk}) of Young quasi-key polynomials in terms of quasi-key polynomials.
\end{proof}

The reading word of a Young semi-skyline filling is given by reading the entries down columns, starting at the rightmost column and proceeding leftward. This order on entries is called the \emph{reading order}. For $a$ a weak composition of $n$ of length $\ell$, notice the reading words of $\YSSF(a)$ are exactly the reading words of $\RSSF(\rev(a))$ with each entry $i$ replaced by $\ell+1-i$, and the reading words of $\YSF(a)$ are exactly the reading words of $\RSF(\rev(a))$ reversed and with entries $i$ replaced by $n+1-i$. 
The \emph{descent set} of $S\in \YSF(a)$, denoted $\des_{\YSF}(a)$, is set of all $i$ such that $i+1$ is weakly to the left of $i$.  (This is not the same as the descent set of an $\RSF$, where $i\in \des_{\RSF}(a)$ if and only if $i+1$ is weakly to the \emph{right} of $i$.)

We now define the weak descent composition associated to a Young standard skyline filling.

\begin{definition}\label{def:wdesyqk}
Let $a$ be a weak composition of $n$ of length $\ell$. The \emph{weak descent composition} of $S\in \YSF(a)$, denoted $\wdes_{\YSF}(S)$, is the weak composition of length $\ell$ obtained as follows. First, decompose the word $1 \, 2 \ldots n-1 \, n$ into runs by placing a bar between $i$ and $i+1$ whenever $i\in \des_{\YSF}(S)$. Suppose there are $k$ runs, i.e., the run decomposition yields $r_1|r_2|\ldots |r_k$. 

Define a strictly increasing sequence of numbers $p_1, p_2, \ldots , p_k$ recursively as follows. Let $p_1$ be the row index of the box containing the entry $1$. Now let $j>1$ and suppose $p_1, \ldots , p_{j-1}$ are known. Define $p_j$ to be the larger of $p_{j-1}+1$ and the index of the highest row containing an entry of the $j^{th}$ run $r_j$. If any $p_j$ is strictly greater than $\ell$, then $\wdes_{\YSF}(S)=\emptyset$. Otherwise $\wdes_{\YSF}(S)$ is the weak composition of length $\ell$ whose $p_j^{th}$ part is the number of entries in the run $r_j$, and whose other parts are all zero.
\end{definition}

Notice this definition is identical to Definition~\ref{def:wdesdis} of weak descent composition for standard immaculate fillings, except we use $\des_{\YSF}(S)$ rather than $\des_{\SIF}(S)$ to determine the run decomposition.

\begin{ex}\label{ex:YSF}
The Young standard skyline fillings of shape $a=(2,0,3,0)$ and their weak descent compositions are shown in Figure~\ref{fig:2030YSF} below. Compare to Example~\ref{ex:SF}, which shows the $\RSF$s for $\rev(a)=(0,3,0,2)$.
\end{ex}

  \begin{figure}[ht]
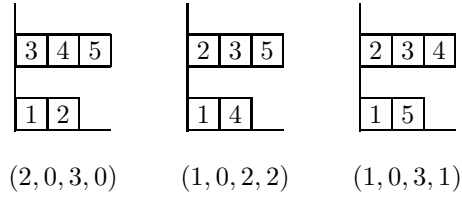

  \begin{center}
    \begin{displaymath}
      \begin{array}{c@{\hskip2\cellsize}c@{\hskip2\cellsize}c@{\hskip2\cellsize}c}
        \vline\tableau{ \\  3 & 4 & 5 \\ \\  1 & 2  \\\hline } &
        \vline\tableau{ \\  2 & 3 & 5 \\ \\  1 & 4  \\\hline } &
        \vline\tableau{ \\  2 & 3 & 4 \\ \\  1 & 5  \\\hline } \\ \\
        (2,0,3,0) & (1,0,2,2) & (1,0,3,1) 
                     \end{array}
    \end{displaymath}
  \end{center}
  \caption{The three $\YSF$s of shape $(2,0,3,0)$ and their weak descent compositions.}\label{fig:2030YSF}
  \end{figure}

\begin{theorem}\label{thm:yqktoyslide}
Let $a$ be a weak composition of length $\ell$. Then
\[\yqk_{a} = \sum_{S\in \YSF(a)} \yfs_{\wdes_{\YSF}(S)}.\]
\end{theorem}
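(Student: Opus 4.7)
The plan is to reduce this to Theorem~\ref{thm:qkeytoslide} (the reverse version) via the reversal bijection built into the definitions of $\yqk$ and $\yfs$, exactly in the spirit of how Theorem~\ref{thm:ydistoyslide} was derived from Theorem~\ref{thm:rdistoslide}. By definition, $\yqk_a(x_1,\ldots,x_\ell)=\qkey_{\rev(a)}(x_\ell,\ldots,x_1)$ and $\yfs_b(x_1,\ldots,x_\ell)=\fs_{\rev(b)}(x_\ell,\ldots,x_1)$, so applying Theorem~\ref{thm:qkeytoslide} immediately gives
\[
\yqk_a(x_1,\ldots,x_\ell)=\sum_{T\in\RSF(\rev(a))}\fs_{\wdes_{\RSF}(T)}(x_\ell,\ldots,x_1).
\]
All that remains is to re-index the sum over $\YSF(a)$ in a manner compatible with weak descent compositions.

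The central step is to establish a bijection $\theta:\YSF(a)\to\RSF(\rev(a))$ with the property that $\wdes_{\RSF}(\theta(S))=\rev(\wdes_{\YSF}(S))$; in particular one side is empty iff the other is. I would define $\theta(S)$ by reversing the order of the rows and replacing each entry $i$ by $n+1-i$, where $n=|a|$. The verification that $\theta(S)\in\RSF(\rev(a))$ is routine: rows reverse from increasing to decreasing, the leftmost column remains strictly increasing bottom-to-top, standardness is preserved, and Young triples map to reverse triples with the inversion condition preserved (since $y\ge z$ and $y>x$ in the Young sense translate, after the $i\mapsto n+1-i$ substitution, to $y'\le z'$ and $y'<x'$ in the reverse sense). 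The map is clearly an involution between the two sets.

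The key combinatorial observation is the compatibility with runs. The condition $i\in\des_{\YSF}(S)$ (i.e.\ $i+1$ weakly left of $i$) translates under $\theta$ to $n-i\in\des_{\RSF}(\theta(S))$ (i.e.\ $n+1-i$ weakly right of $n-i$). Hence the runs $r_1|r_2|\cdots|r_k$ of $S$ correspond, in reverse order and with entries complemented, to the runs $r_k'|r_{k-1}'|\cdots|r_1'$ of $\theta(S)$, with $|r_j'|=|r_j|$. Now I would check by induction on $j$ that the row-index sequences $p_1<\cdots<p_k$ for $S$ and $p_1'<\cdots<p_k'$ for $\theta(S)$ satisfy $p_{k+1-j}'=\ell+1-p_j$. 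The base case uses that the box containing $n$ in $\theta(S)$ is the image of the box containing $1$ in $S$, whose row index flips from $p_1$ to $\ell+1-p_1$. For the inductive step, the ``lowest row in $\theta(S)$'' constraint becomes the ``highest row in $S$'' constraint under the row-reversal, and the constraint $p_{j+1}'\le p_j'-1$ matches the Young constraint $p_{j-1}\le p_j-1$ after re-indexing. Consequently $\wdes_{\RSF}(\theta(S))$ places a run of length $|r_j|$ in position $\ell+1-p_j$, i.e.\ it equals $\rev(\wdes_{\YSF}(S))$; and either side is $\emptyset$ precisely when some $p_j$ falls out of $\{1,\ldots,\ell\}$.

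Putting the pieces together, I would conclude
\[
\yqk_a=\sum_{S\in\YSF(a)}\fs_{\rev(\wdes_{\YSF}(S))}(x_\ell,\ldots,x_1)=\sum_{S\in\YSF(a)}\yfs_{\wdes_{\YSF}(S)}(x_1,\ldots,x_\ell),
\]
where the last equality is just the definition of $\yfs$. The main obstacle is the careful inductive identification of the $p_j'$ with $\ell+1-p_j$; the inequality ``max becomes min under $x\mapsto\ell+1-x$'' is what makes the Young and reverse recursions match, and this needs to be checked cleanly. Once that is in hand, the rest of the argument is entirely formal.
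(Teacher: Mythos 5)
Your proposal is correct and takes essentially the same route as the paper: the paper likewise deduces the result from Theorem~\ref{thm:qkeytoslide} by means of the row-reversing, entry-complementing involution $\theta:\YSF(a)\to\RSF(\rev(a))$, observing that it reverses weak descent compositions, and then concludes with exactly your final chain of equalities. The only difference is that you spell out the verification of the triple conditions and the inductive identification of the $p_j$ sequences, which the paper leaves implicit by analogy with Theorem~\ref{thm:ydistoyslide}.
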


\begin{proof}
This is proved similarly to Theorem~\ref{thm:ydistoyslide}. The same map $\theta$ which reverses the order of the rows and replaces each entry $i$ with $n+1-i$ is an involution between $\YSF(a)$ and $\RSF(\rev(a))$. This involution is similarly $\wdes$-reversing; notice that in this case we have $i+1$ weakly left of $i$ in $S\in \YSF(a)$ if and only if $n-i+1$ is weakly \emph{right} of $n-i$ in $\theta(S)$.  This is precisely what we need based on the relationship between descents in $\YSF$s and descents in $\RSF$s.
Therefore,
\begin{align*}
\yqk_a(x_1,\ldots , x_\ell) & = \qkey_{\rev(a)}(x_\ell, \ldots , x_1) \\
                                       & = \sum_{S\in \RSF(\rev(a))}\fs_{\rev(\wdes_{\RSF}(S))}(x_\ell, \ldots , x_1) \\
                                       & = \sum_{S\in \YSF(a)}\yfs_{\wdes_{\YSF}(S)}(x_1,\ldots , x_\ell).
\end{align*}  
 \end{proof}

\begin{ex}
From Figure~\ref{fig:2030YSF}, we compute
\[\yqk_{(2,0,3,0)} = \yfs_{(2,0,3,0)} + \yfs_{(1,0,2,2)} + \yfs_{(1,0,3,1)}.\]
\end{ex}

\subsection{A weak insertion algorithm}

We now provide the positive expansion of the dual immaculate slide polynomials into Young quasi-key polynomials.  To do this, we construct a bijection $\phi : \SIF(a)\rightarrow Y(a)$, where $Y(a)$ is the set of pairs $(P,Q)$ such that $P$ is a $\YSF$ and $Q$ is a DIRF (defined below) with row strip shape $\rev(a)$ (also defined below) and the same shape as $P$. This bijection, described below, will preserve weak descent compositions, in the sense that if $\phi(U)=(P,Q)$ then $\wdes_{\SIF}(U) = \wdes_{\YSF}(P)$, which in turn proves that the expansions into fundamental slide polynomials match.  

Let $a$ be a weak composition of $n$. A \emph{recording triple} in $D(a)$ is a collection of three boxes in the positive quadrant with two adjacent in a row, and the third box in the same column as the left box but above it, possibly with rows in between. 

\begin{figure}[ht]
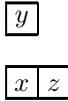

  \begin{displaymath}
    \begin{array}{l}
       \tableau{ y } \\ \\ \tableau{ x  & z }
    \end{array}
  \end{displaymath}
  \caption{Recording triples for weak composition diagrams.}\label{fig:recordingtriples}
\end{figure}

To be a recording triple in $D(a)$, the boxes labeled $x$ and $y$ in Figure~\ref{fig:recordingtriples} must be in $D(a)$, but the box labeled $z$ does not have to be; consider the box labeled $z$ to contain infinity if it is not in $D(a)$.  Given any filling of the boxes of $D(a)$ with $\{1,\ldots , n\}$, a recording triple in $D(a)$ is said to be an \emph{inversion recording triple} if whenever $y>x$ we also have $y>z$. Note that the recording triple rule is different from the triple rule appearing in Definition~\ref{def:YSSF} of Young semi-skyline fillings.

\begin{definition}{\label{def:DIRF}}
Let $b$ be a weak composition of $n$. A filling of $D(b)$ with the entries $\{1,\ldots , n\}$, each used exactly once, is a \emph{dual immaculate recording filling} (or DIRF) if it satisfies the following:
\begin{enumerate}[leftmargin=1.8cm]
\item[(DIRF1)] Entries increase from left to right along rows
\item[(DIRF2)] If $i+1$ is weakly left of $i$, then $i+1$ is in the leftmost column
\item[(DIRF3)] The leftmost column increases from top to bottom
\item[(DIRF4)] All recording triples are inversion recording triples.
\end{enumerate}
If $b$ is in fact a composition, we say instead that such a filling is a DIRT (dual immaculate recording tableau).
\end{definition}

The conditions defining a DIRF are the same as the conditions in~\cite{AHM18} defining a DIRT (on a composition diagram), except that the notion of \emph{row strips} from~\cite{AHM18} is incorporated into (DIRF2) here.  A \emph{row strip} is a maximal sequence of consecutive integers $r_1,r_2, \hdots , r_k$ such that $r_i$ is strictly to the left of $r_{i+1}$ for $1 \le i < k$.  Therefore our definition agrees with the definition of a DIRT from \cite{AHM18} when $b$ is a composition.  The \emph{row strip shape} of a DIRF $Q$ is the weak composition formed as follows. Consider the rows of $Q$ from \emph{top to bottom}. If there is no entry in a given row we record a zero, otherwise we record the length of the row strip beginning in that row. We note that in the case where $b$ is a composition, this also agrees with the definition of row strip shape for DIRTs from \cite{AHM18}, since for a composition diagram there are no empty rows.

\begin{ex}\label{ex:DIRF}
Let $\ell=4$ and $b=(1,0,4,0)$. The filling 
\[\vline\tableau{ \\  1 & 2 & 3 & 5 \\ \\  4  \\\hline }\] 
of $D(b)$ is a DIRF with row strip shape $(0,3,0,2)$.
\end{ex}

Given $U\in \SIF(a)$, recall that its reading word is formed by reading the rows of $U$ from left to right, starting at the top row and proceeding downwards. We define the (weak) insertion of the reading word $u_1u_2 \cdots u_n$ of $U$ using the following procedure, which extends the insertion algorithm of~\cite{AHM18}.  

Let $P_0$ be an empty diagram.  Place the first entry $u_1$ from the reading word of $U$ into the leftmost column of the empty diagram $P_0$ in the row containing $u_1$ in $U$.  This produces the diagram $P_1$ consisting of one box.  Place a ``1" in the corresponding location in $Q_0$ creating $Q_1$.

Assume the first $k$ entries in the reading word of $U$ have been inserted into a diagram called $P_k$ and insert $u_{k+1}$ as follows.  Scan the columns of $P_k$ in reading order as a $\YSF$ (i.e. from top to bottom, right to left), until a cell $(c_i,d_i)$ is reached such that $P_k(c_i-1,d_i) \le u_{k+1} < P_k(c_i,d_i)$.  (If $(c_i,d_i)$ is not in the diagram $P_k$, consider $P_k(c_i,d_i)$ to be equal to infinity.)  Switch the roles of $P_k(c_i,d_i)$ and $u_{k+1}$ (so that $u_{k+1}$ is placed into the cell previously containing $P_k(c_i,d_i)$ and $P_k(c_i,d_i)$ is now used for the scanning).  Continue scanning the reading word of $P_k$ from this point, now seeking an entry larger than $P_k(c_i,d_i)$ immediately to the left of an entry smaller than $P_k(c_i,d_i)$ so that $P_k(c_i,d_i)$ can bump this entry.  Repeat this process until either the entry ``bumped" is not in the leftmost column but is equal to infinity (at which point the procedure terminates) or the leftmost column is reached.  If an entry must be inserted into the leftmost column, place it in the same row in which it appears in $U$.  The resulting diagram is called $P_{k+1}$.  Continue building the recording tableau by placing $k+1$ in the cell where the insertion procedure into $P_k$ terminates, and call the resulting recording diagram $Q_{k+1}$.  (See Example~\ref{ex:2030}.)

\begin{lemma}\label{lem:weakinsert}
Weak insertion with respect to $U\in \SIF(a)$ is well defined, and the resulting tableau $P$ is a $\YSF$ such that $U$ and $P$ have identical leftmost columns. Moreover, the recording tableau $Q$ is a DIRF of the same shape as $P$ and has row strip shape $\rev(a)$.
\end{lemma}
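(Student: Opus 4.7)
The plan is to proceed by induction on the number of letters inserted from the reading word $u_1 u_2 \cdots u_n$ of $U$. Write $U_k$ for the sub-filling of $U$ consisting of the boxes containing $u_1,\ldots,u_k$. After $k$ steps of insertion I want to maintain the joint invariants that $P_k \in \YSF$ of some shape $b_k$, that $Q_k$ is a DIRF of the same shape $b_k$, that the leftmost column of $P_k$ coincides with the leftmost column of $U_k$ (as a filling of the first column), and that the row strip shape of $Q_k$ is $\rev$ of the shape of $U_k$. Since the algorithm here is a strict extension of the insertion in \cite{AHM18} (obtained by permitting empty rows in shapes and by legislating the row of each first-column placement via $U$), the plan is to reduce as much as possible to the analogous verifications there, and isolate the genuinely new content.

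The base case $k=0$ is immediate. For the induction step I would split on whether $u_{k+1}$ lies in the leftmost column of $U$ or not. If it does not, then the scanning rule produces a bumping sequence that either terminates in an interior column (as in \cite{AHM18}) or forces a new entry into the leftmost column. If $u_{k+1}$ does lie in the leftmost column of $U$, then by (SSIF2) the row index $r$ of $u_{k+1}$ in $U$ is strictly larger than every row index already used in the leftmost column of $U_k$, hence by the induction hypothesis (that the first columns of $U_k$ and $P_k$ agree) the row $r$ of $P_k$ has no first-column box, so placing the bumped entry there is well-defined. I expect the main obstacle to be checking well-definedness when a bumped entry from the interior reaches the leftmost column: here I would argue that the scanning rule forces the bumped entry to have originated at a row $\leq r$, and then use (SSIF3) together with the monotonicity built into the scanning rule to conclude that placing it in row $r$ of the leftmost column is consistent.

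For the $\YSF$ conditions on $P_{k+1}$: (YSSF1), (YSSF2) for interior columns, and (YSSF4) follow from exactly the arguments of \cite{AHM18}, since empty rows play no role in the scanning order (top-to-bottom, right-to-left). The strict-increase of the leftmost column from bottom to top, together with the row-index condition (YSSF3), reduce to showing that each letter placed in row $i$ of $P_{k+1}$ is at least $i$. I would prove this by induction jointly with the main invariant: for first-column placements this follows from (SSIF3) applied to $U$; for interior placements it follows because the scanning rule only bumps an entry into a row weakly above its current row, so row indices of entries can only increase along a bumping chain. The identity of the leftmost columns of $U$ and $P$ is then immediate, since every first-column entry of $P$ is placed in the row dictated by $U$, and once placed is never displaced by the scanning rule.

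Finally, for the recording tableau $Q$ I would show that rows of $Q$ record row strips obtained by reading the rows of $U$ from top to bottom. Since the reading word of $U$ traverses each row of $U$ from left to right and then drops to the next row down, and since $u_{k+1}$ is placed in the leftmost column of $Q$ exactly when $u_{k+1}$ begins a new row of $U$, the entries $\{1,\ldots,n\}$ of $Q$ break up into row strips whose lengths, read from top to bottom, are $a_\ell,a_{\ell-1},\ldots,a_1$, giving row strip shape $\rev(a)$. The conditions (DIRF1)--(DIRF3) are then transparent from this description combined with the invariant on the leftmost column. The most delicate point is (DIRF4): to verify that every recording triple in $Q$ is an inversion recording triple, I would track, for each column of $P$, the chronological order in which its cells are created, and show that the scanning rule forces the values in $Q$ at those cells to respect the required inequality; this I expect to be the main obstacle, and I would handle it by a careful case analysis modeled on the analogous argument in \cite{AHM18}, adapted to the presence of empty rows.
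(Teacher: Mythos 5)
There is a genuine gap at the heart of your plan: you never establish the key fact that a bumping chain reaches the leftmost column \emph{only} when the letter being inserted is itself a leftmost-column entry of $U$ (equivalently, the first letter of an increasing run of the reading word). Instead you explicitly contemplate the case in which ``a bumped entry from the interior reaches the leftmost column'' while $u_{k+1}$ is \emph{not} a first-column entry of $U$, and you propose to make that placement ``consistent'' using (SSIF3) and monotonicity of the scan. But if that case could ever occur, the leftmost columns of $U$ and $P$ would fail to coincide and the lemma would simply be false; no consistency argument can rescue it. The paper's proof hinges precisely on showing this case never arises: a run-initial letter is strictly smaller than every letter inserted before it (by (SSIF2) and the top-to-bottom reading order), so it can only land in column one, and by \cite[Lemma 3.5]{AHM18} each non-initial letter of an increasing run terminates strictly to the right of its predecessor, so neither it nor its bumping chain can reach column one. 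This is the single genuinely new idea beyond \cite{AHM18}, and your plan misses it. Relatedly, your inequality in the first-column case is reversed --- since rows of $U$ are read top to bottom, the row index of a newly inserted first-column letter is strictly \emph{smaller} than those already used --- though the conclusion that the target box is free survives.

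Two smaller points. First, you spend effort on (YSSF3) and on the strict increase of the first column via a claim that bumped entries only move to weakly higher rows; (YSSF3) is not required of a standard $\YSF$, the monotonicity claim is dubious under the top-to-bottom, right-to-left scan, and the first-column increase follows for free once the first columns of $U$ and $P$ are shown to be identical. Second, for the recording filling the paper simply cites \cite[Corollary 3.10]{AHM18} together with the observation that inserting or deleting empty rows affects none of (DIRF1)--(DIRF4) and that the occupied rows of $P$ are exactly those of $U$; your plan instead proposes to reverify (DIRF1)--(DIRF4) directly, deferring the delicate (DIRF4) to ``a careful case analysis.'' That route is legitimate in principle but is left undone, and the row-strip-shape claim you make also silently relies on the same Lemma 3.5 statement (successive letters of a run insert strictly rightward) that is the missing ingredient above.
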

\begin{proof}
Well-definedness of weak insertion follows from well-definedness of the insertion of \cite{AHM18}, except we need to show we can actually place entries in the specified rows in the first column, as opposed to just creating new rows as in the insertion given in \cite[Procedure 3.1]{AHM18}.  Divide the reading word of $U$ into increasing runs from left to right. By definition, the runs correspond to the rows of $U$, and the first entry of each run is smaller than the first entry of the previous run; in particular, it is smaller than every entry that has been inserted so far.  When an increasing sequence is inserted, each entry of this sequence ends up strictly to the right of the preceding entry in the sequence (see~\cite[Lemma 3.5]{AHM18}). Therefore, the insertion procedure will place an entry in the leftmost column if and only if it is the first entry of an increasing run, i.e., an entry from the leftmost column of $U$. Since such an entry is smaller than every entry inserted so far, the corresponding position in the leftmost column is empty at the time of this insertion. Hence we may place the entry in the leftmost box of the row which contains the entry in $U$, and the procedure is well-defined. This also establishes that $U$ and $P$ have identical first column. The fact that $P$ is a $\YSF$ follows from the fact that insertion of the reading word of a standard immaculate tableau yields a standard Young composition tableau (\cite{AHM18}), and the fact that introducing empty rows does not affect any of the increasing or triple conditions.

The nonzero entries of the weak composition that is the row strip shape of $Q$ will be the nonzero entries of $\rev(a)$, since the empty rows of $P$ are precisely the empty rows of $U$, and the row strip shape of $Q$ records these rows in reverse order. It is immediate from the definition that $Q$ has the same shape as $P$. Then the fact that $Q$ is a DIRF and has row strip shape $\rev(a)$ follows from the fact (\cite[Corollary 3.10]{AHM18}) that insertion of standard immaculate tableaux for dual immaculate quasisymmetric functions yields DIRTs of row strip shape $\rev(\alpha)$.
\end{proof}

\begin{ex}{\label{ex:2030}}
Let $a=(2,0,3,0)$. The $\SIF$s for $a$ and their weak descent compositions are in Figure~\ref{fig:2030SIF}. Inserting their reading words gives the $\YSF$s shown in Figure~\ref{fig:YSFfrominsertion}, which we note agree on $\wdes$.
\end{ex}

\begin{figure}[ht]
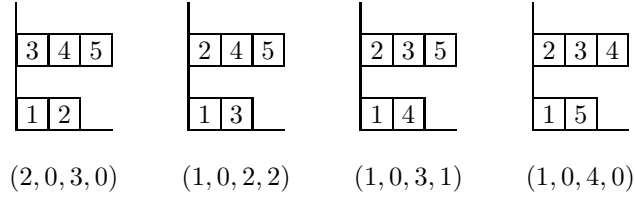

  \begin{center}
    \begin{displaymath}
      \begin{array}{c@{\hskip2\cellsize}c@{\hskip2\cellsize}c@{\hskip2\cellsize}c}
        \vline\tableau{ \\  3 & 4 & 5 \\ \\  1 & 2  \\\hline } &
        \vline\tableau{ \\  2 & 4 & 5 \\ \\  1 & 3  \\\hline } &
        \vline\tableau{ \\  2 & 3 & 5 \\ \\  1 & 4  \\\hline } &
        \vline\tableau{ \\  2 & 3 & 4 \\ \\  1 & 5  \\\hline } \\ \\
        (2,0,3,0) & (1,0,2,2) & (1,0,3,1) & (1,0,4,0)
                     \end{array}
    \end{displaymath}
  \end{center}
  \caption{The 4 $\SIF$s for $a=(2,0,3,0)$ and their weak descent compositions $\wdes_{\SIF}$.}\label{fig:2030SIF}
  \end{figure}

  \begin{figure}[ht]
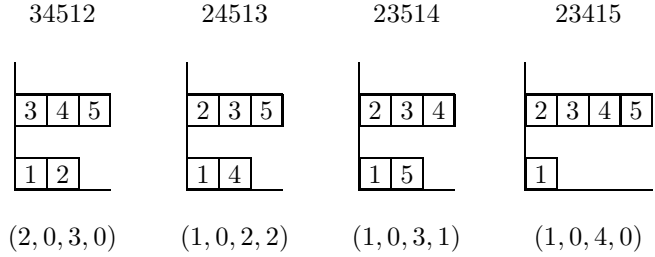

  \begin{center}
    \begin{displaymath}
      \begin{array}{c@{\hskip2\cellsize}c@{\hskip2\cellsize}c@{\hskip2\cellsize}c}
        34512 & 24513 & 23514 & 23415 \\ \\
        \vline\tableau{ \\  3 & 4 & 5 \\ \\  1 & 2  \\\hline } &
        \vline\tableau{ \\  2 & 3 & 5 \\ \\  1 & 4  \\\hline } &
        \vline\tableau{ \\  2 & 3 & 4 \\ \\  1 & 5  \\\hline } &
        \vline\tableau{ \\  2 & 3 & 4 & 5 \\ \\  1   \\\hline } \\ \\
        (2,0,3,0) & (1,0,2,2) & (1,0,3,1) & (1,0,4,0)
                     \end{array}
    \end{displaymath}
  \end{center}
  \caption{The reading words of the $\SIF$s from Figure~\ref{fig:2030SIF} above, the $\YSF$s arising from insertion of those words, and their weak descent compositions $\wdes_{\YSF}$.}\label{fig:YSFfrominsertion}
  \end{figure}
  
    \begin{figure}[ht]
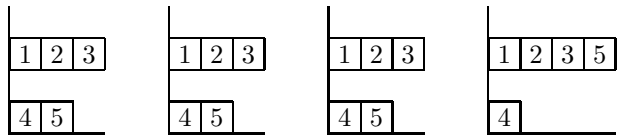

  \begin{center}
    \begin{displaymath}
      \begin{array}{c@{\hskip2\cellsize}c@{\hskip2\cellsize}c@{\hskip2\cellsize}c}
        \vline\tableau{ \\  1 & 2 & 3 \\ \\  4 & 5  \\\hline } &
        \vline\tableau{ \\  1 & 2 & 3 \\ \\  4 & 5  \\\hline } &
        \vline\tableau{ \\  1 & 2 & 3 \\ \\  4 & 5  \\\hline } &
        \vline\tableau{ \\  1 & 2 & 3 & 5 \\ \\  4   \\\hline } \\ \\
                     \end{array}
    \end{displaymath}
  \end{center}
  \caption{The DIRFs from the $\YSF$s in Figure~\ref{fig:YSFfrominsertion} above. All have row strip shape $\rev(2,0,3,0)=(0,3,0,2)$.}\label{fig:2030DIRF}
  \end{figure}

\begin{lemma}\label{lem:wdespreserved}
Let $U\in \SIF(a)$ and let $(P,Q)$ be the $(\YSF, {\rm DIRF})$ pair resulting from insertion of the reading word of $U$. Then $\wdes_{\SIF}(U) = \wdes_{\YSF}(P)$.
\end{lemma}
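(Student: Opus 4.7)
The plan is to reduce the equality of weak descent compositions to two sub-claims: first that $\des_{\SIF}(U) = \des_{\YSF}(P)$ (so that the run decompositions of $1\,2\,\ldots\,n$ coincide), and second that for each run $r_j$ in this common decomposition, the highest row index containing an entry of $r_j$ is the same in $U$ as in $P$. Given both, the recursive constructions of the sequences $(p_1,\ldots,p_k)$ in Definitions~\ref{def:wdesdis} and \ref{def:wdesyqk} produce equal values, and hence equal weak compositions.

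To verify that the descent sets coincide, I would argue by cases on the relative positions of $i$ and $i+1$ in $U$. If $i$ and $i+1$ lie in the same row of $U$ then they appear consecutively in the reading word, and the key insertion property (Lemma~3.5 of \cite{AHM18}, as invoked in the proof of Lemma~\ref{lem:weakinsert}) places $i+1$ strictly to the right of $i$ in $P$, so $i$ is a descent of neither $U$ nor $P$. When $i+1$ lies in a strictly higher row of $U$ it is inserted before $i$; analyzing the insertion and the rule that leftmost-column placements follow the $U$-row shows $i+1$ ends up weakly to the left of $i$ in $P$, giving a descent of $P$ at $i$. The remaining case, where $i+1$ is in a strictly lower row of $U$, is handled symmetrically.

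To verify the highest-row agreement, I would begin with the observation that by Lemma~\ref{lem:weakinsert} the leftmost columns of $U$ and $P$ are identical, so $U$ and $P$ have the same set of nonempty rows. Within a run $r_j$ in $U$, successive entries lie in weakly decreasing rows (since no descent means $i+1$ is in the same or a lower row than $i$), so the highest row of $r_j$ in $U$ is the row $r^*$ of the smallest element $e_j$. I would then show the same $r^*$ is the highest row of $r_j$ in $P$ by an inductive analysis of the insertion: each element $e$ of $r_j$ is placed either into a first-column cell at its $U$-row (when $e$ is a first-column entry of $U$, so $r^*$ is realized in $P$), or else bumped into an existing row no higher than a row already occupied by a previously-inserted element of the run.

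The main obstacle will be making the second step rigorous, specifically ruling out that any bumping chain places an element of $r_j$ above row $r^*$ in $P$. This requires tracking how the triple conditions (YSSF4) and the first-column rule interact with the scan-and-bump procedure, so that the inductive invariant ``no element of $r_j$ sits above row $r^*$'' survives each insertion step. I expect this to mirror closely the descent-preservation argument of \cite{AHM18} for the composition case, with the empty rows and first-column placements in the weak setting not disrupting the key invariants, since those modifications are precisely what force the leftmost columns to match via Lemma~\ref{lem:weakinsert}.
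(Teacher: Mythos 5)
Your reduction to two sub-claims is where the argument breaks: the second sub-claim --- that for each run $r_j$ the highest row index containing an entry of $r_j$ is the same in $U$ as in $P$ --- is false. Take $a=(2,2)$ and let $U\in\SIF(2,2)$ have bottom row $1,3$ and top row $2,4$, so the reading word is $2\,4\,1\,3$. Running the weak insertion: $2$ and $1$ go into the first column in rows $2$ and $1$ respectively; $4$ first lands at the end of row $2$; then $3$ bumps $4$ out of the cell in row $2$, column $2$, and $4$ comes to rest at the end of row $1$. Thus $P$ has bottom row $1,4$ and top row $2,3$. Both $U$ and $P$ have descent set $\{1,3\}$ and run decomposition $1\,|\,2\,3\,|\,4$, but the run $\{4\}$ occupies row $2$ in $U$ and row $1$ in $P$. (The lemma still holds here: in both cases $p_3=p_2+1=3>\ell=2$, so both weak descent compositions are $\emptyset$.) The example also shows why your proposed inductive invariant cannot be maintained: an element of a run can be displaced to a lower row by the insertion of a \emph{later} letter of the reading word, and the smallest element of a run need not be a first-column entry of $U$, so the row $r^*$ need not be ``realized'' in $P$ by a first-column placement at all.

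What is true, and what the paper proves, is only that the recursively defined numbers $p_j$ agree. The induction splits on whether the first element $i+1$ of the run $r_{j+1}$ lies in the (common) leftmost column. If it does, one shows that no element of the run rises above the row of $i+1$ in $P$, so the two highest rows agree and both equal the row of $i+1$. If it does not, the highest rows of $r_{j+1}$ in $U$ and in $P$ may genuinely differ, but each is bounded above by $p_j$: the first-column entry of the relevant row belongs to an earlier run, whose $p$-value already accounts for that row index, so the recursion returns $p_{j+1}=p_j+1$ on both sides and the discrepancy is harmless. Your first sub-claim (equality of descent sets) is fine and is also how the paper proceeds, though the paper obtains it more cheaply by collapsing empty rows and citing the composition-shape result of \cite{AHM18} rather than re-analyzing the bumping.
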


\begin{proof}
To see that the descent set of the $\SIF$ $U$ is the same as the descent set of the $\YSF$ $P$, first note that every $\YSF$ of shape $a$ collapses (via the removal of all empty rows) to a standard Young composition tableau of shape $\flatten(a)$ (Definition~\ref{def:YCT}). Likewise, every $\SIF$ of shape $a$ collapses to a $\SIT$ of shape $\flatten(a)$ (Definition~\ref{def:SSIT}). In both cases, the positions of empty rows do not affect the descent set, and by \cite{AHM18} the descent set of $U\in \SIT(\alpha)$ is the same as the descent set of the standard Young composition tableau obtained by inserting $U$. So $i$ is a descent in $U$ if and only if $i$ is a descent in $P$. Therefore the run decomposition of $1 \, 2 \ldots  n-1 \, n$ is the same for both $U$ and $P$. 

Now recall (from Definition~\ref{def:wdesdis} of $\wdes_{\SIF}(U)$ and Definition~\ref{def:wdesyqk} of $\wdes_{\YSF}(P)$) that the integers $p_1 < \ldots < p_k$ are obtained recursively by setting $p_j$ equal to the maximum of $p_{j-1}+1$ and the index of the highest row containing an entry from the $j^{th}$ run $r_j$.  We will show by induction on $j$ that $p_j$ is the same in both $U$ and $P$, for all $1\le j \le k$. This will establish that $\wdes_{\SIF}(U) = \wdes_{\YSF}(P)$, including that $\wdes_{\SIF}(U) =\emptyset$ if and only if $\wdes_{\YSF}(P)=\emptyset$. 

First we establish that $p_1$ is the same in both $U$ and $P$. By definition, in both $U$ and $P$, $p_1$ is the index of the row containing the entry $1$. Now, since $1$ is the smallest entry and entries increase along rows of both $U$ and $P$, $1$ is necessarily in the leftmost column of both $U$ and $P$. Since $U$ and $P$ have identical leftmost column by Lemma~\ref{lem:weakinsert}, the index of the row containing $1$ is the same in both $U$ and $P$.

Now suppose that $p_1, \ldots , p_j$ agree on $U$ and $P$ for some $j\ge 1$. We will show that $p_{j+1}$ agrees on $U$ and $P$. 
Suppose $i$ is the last element of the $j^{th}$ run, (in both $U$ and $P$, since $U$ and $P$ have the same run decomposition), so $i+1$ is the first element of the ($j+1$)th run, and suppose $q$ is the last element of the $(j+1)^{th}$ run. Since the leftmost columns of $U$ and $P$ are identical, there are only two cases concerning the location of $i+1$ in $U$ and $P$. Either

\begin{enumerate}
\item $i+1$ is not in the leftmost column of $U$, $P$; or
\item $i+1$ is in the leftmost column of $U$, $P$.
\end{enumerate}

We claim that in case (1), we must have $p_{j+1} = p_j+1$ in both $U$ and $P$. To see this, first note that since $i+1$ is not in the leftmost column of $U$, the row of $U$ containing $i+1$ has a smaller entry to the left of $i+1$.  Therefore, the index of the row containing the entry $i+1$ in $U$ has already been considered in the computation of $p_r$ for some $r<j$, meaning that $p_r$ (and hence $p_j$) for $U$ is at least the index of the row containing the entry $i+1$.  Hence we must have $p_{j+1} = p_j+1$ in $U$.

We now show that $p_{j+1}=p_j+1$ in $P$.  If no entry of the run $i+1, \ldots , q$ of $P$ is in a higher row than the row of $U$ containing $i+1$, we are done.  Otherwise, let $s$ be an entry of the $(j+1)^{th}$ run of $P$ such that all other entries of this run are weakly below $s$.  Since $s$ is in the same run as $i+1$, either $s=i+1$ or $s$ is strictly to the right of the entry $i+1$.  In particular, $s$ may not be in the leftmost column of $P$.  Now, the leftmost (i.e. first column) entry in the row containing $s$ must be smaller than $s$ (by (YSSF1)) and in fact must be less than $i+1$ since it can't be in the run consisting of $i+1, \hdots , q$.  Hence again the row index of the highest row containing an element of the $(j+1)^{th}$ run has been considered already in the computation of some $p_r$ for $r<j$, and thus we must have $p_{j+1} = p_j+1$ in $P$.

Now suppose we are in case (2). Since $i+1$ is in the leftmost column, it is in fact in the same position in both $U$ and $P$, by Lemma~\ref{lem:weakinsert}. In particular, $i+1$ is in the same row in both $U$ and $P$. We know that runs in $U$ proceed weakly downwards, so the highest row containing an entry of the run $i+1, \ldots , q$ in $U$ is the row containing $i+1$. It is therefore enough to show that in $P$, no entry of the run $i+1, \ldots , q$ is in a row strictly higher than the row containing $i+1$, since this will establish that the index of the highest row containing an element of the $(j+1)^{th}$ run $i+1, \ldots , q$ is the same in both $U$ and $P$.  It then follows that since $p_j$ is equal for $U$ and $P$, $p_{j+1}$ is also equal for $U$ and $P$.

To see this, first consider the point in the insertion procedure at which $i+1$ is inserted. Since $i+1$ goes into the leftmost column of $P$, $i+1$ has to be the leftmost entry of some row of $U$. Therefore, since the leftmost columns of $U$ and $P$ agree and since leftmost column entries of $P$ can never be bumped during the insertion procedure, the smallest entry inserted prior to the insertion of $i+1$ is the leftmost entry of the lowest occupied row above the row of $i+1$ in $P$. This entry, say $s$, is strictly greater than $i+1$ since the leftmost column of both $U$ and $P$ increases as entries are read from bottom to top. Now, if we had $s\le q$, then we would have a descent in $P$ between $s-1\ge i+1$ and this first-column entry $s$, contradicting that $i+1, \ldots , q$ is a run in $P$. (Recall the leftmost column entries can't be bumped, so if this leftmost column entry is $s$ at any time it is $s$ forever after that.)  Therefore in $P$, the leftmost entry of every row above the row whose leftmost column entry is $i+1$, must in fact be greater than $q$. Since the leftmost entry of a row can never change during insertion and  entries must increase along rows, those rows can never have any entry from $i+2, \ldots , q$. So indeed in $P$ no entry from the rest of the run $i+2, \ldots , q$ can end up in a row higher that the row containing $i+1$, as required. 
\end{proof}

We are now in a position to prove our positive formula for the Young quasi-key expansion of a dual immaculate slide polynomial. Recall Theorem~\ref{thm:distoyqk} states that for $a$ a weak composition,

\begin{equation}\label{eq:distoyqk}
\dis_a = \sum_b c_{a,b} \yqk_b,
\end{equation}

\noindent\emph{Proof of Theorem~\ref{thm:distoyqk}:} 
Let $Y(a)$ denote the set of pairs $(P,Q)$ such that $P$ is a $\YSF$ and $Q$ is a DIRF of the same shape as $P$ having row strip shape $\rev(a)$. We claim there is a bijection from $\SIF(a)$ to $Y(a)$ satisfying $\wdes_{\SIF}(U) = \wdes_{\YSF}(P)$ whenever $U$ is the $\SIF$ associated to $(P,Q)$ under the bijection. The existence of such a bijection would imply that
\[\sum_{U\in \SIF(a)}\yfs_{\wdes_{\SIF}(U)} = \sum_{(P,Q)\in Y(a)}\yfs_{\wdes_{\YSF}(P)}.\]

The left-hand side of the expression above is equal to the left-hand side of $(\ref{eq:distoyqk})$ by Theorem~\ref{thm:ydistoyslide}, while the right-hand side is equal to the right-hand side of $(\ref{eq:distoyqk})$, by Theorem~\ref{thm:yqktoyslide}. Note that by Lemma~\ref{lem:wdespreserved}, we have $\wdes_{\SIF}(U)=\emptyset$ if and only if $\wdes_{\YSF}(P)=\emptyset$, so there is no complication introduced by the possibility that certain $U$ or $P$ might not yield a fundamental slide polynomial.

To show such a bijection exists, fix $a$ and consider the weak insertion map taking $U\in \SIF(a)$ to $(P,Q)$. By Lemma~\ref{lem:weakinsert} we know that $(P,Q)\in Y(a)$, and by Lemma~\ref{lem:wdespreserved} we know $\wdes_{\SIF}(U) = \wdes_{\YSF}(P)$. To show this is a bijection, note that an inverse can be defined using a variation of the \emph{rapture} procedure of \cite{AHM18} to form a word from $(P,Q)$.  We give an outline of this procedure and then provide details below.

\begin{enumerate}
\item Locate the largest entry in $Q$ and remove it, resulting in $Q'$.
\item Find the entry $r_0$ in the corresponding box of $P$.
\item Reverse the insertion procedure to obtain a new diagram $P'$ and a letter $m$ that was inserted into $P'$ to construct $P$.
\item Repeat using $Q'$ instead of $Q$ and using $P'$ in place of $P$.  Construct a word by appending the letter $m$ obtained at each iteration to the front of the word created by the previous $m$ values.
\end{enumerate}

The rapture process used in Step 3 is identical to the ``rapture" procedure described in~\cite{AHM18} but we describe it here for completeness.  Scan the reading word of $P$ in reverse order, beginning with the entry appearing immediately before $r_0$ in reading order, unless $r_0$ is in the leftmost column.  If $r_0$ is in the leftmost column, start with the last entry in reading order in the second column from the left.  Find the first box containing an entry $r_1$, smaller than $r_0$, lying immediately to the left of an entry greater than or equal to $r_0$.   Place $r_0$ in the cell occupied by $r_1$ and continue scanning the reading word in reverse order and ``bumping" out smaller entries until the last letter (i.e. the first letter in reading order) is scanned.  Set $m$ equal to the last entry bumped.  (See Figure~\ref{fig:rapture} for an example.)

We need to check that the resulting filling $P'$ satisfies the YSSF conditions (1),(2), and (4) from Definition~\ref{def:YSSF}.  First note that (YSSF1) is satisfied by construction.  Since there are no repeated entries in $P$, column entries must be distinct and will remain distinct when bumped and removed, so (YSSF2) is also satisfied.  Finally, we must prove (YSSF4), which states that every Young triple is an inversion triple.  The only way replacing a smaller entry with a larger entry could convert an inversion triple to a non-inversion triple is if in the triple $\tableau{z & x \\ \\ & y}$, the entry $x$ is replaced with an entry $r_i$ greater than $y$ while $y \ge z$.  But in that case, $r_i$ could not have been bumped from a cell in between the cell containing $y$ and the cell containing $x$.  If it were, then $r_i > x$ implies the entry immediately to the left of $r_i$ is less than $z$, and this pattern of the lower row containing a larger entry continues all the way to the leftmost column.  But this contradicts the fact that the leftmost column entries strictly increase from bottom to top.

Since $r_i > y$, we know $y$ did not bump $r_i$.  But then $r_i$ was compared to $y$.  Since $r_i$ did not bump $y$, it must be the case that the entry $e$ immediately to the right of $y$ is less than $r_i$.  We must have $e \ge y$, since the row entries in $P$ increase from left to right, but this implies $e > x$ since $y > x$.  But then $e, x,$ and the entry $f$ immediately to the right of $x$ must form a non-inversion triple in $P$ since $e<f$.  This is because if $e \ge f$ then $r_i > f$ and hence $r_i$ would not bump $x$.  This contradicts condition (YSSF4) for $P$.  Therefore all Young triples remain inversion triples and (YSSF4) is satisfied.

In fact, when $m$ is inserted into $P'$, the resulting filling is $P$.  To see this, note that insertion of an entry $k$, bumped from a given position in reading order, bumps the next entry which is greater than $k$ and left of an entry greater than or equal to $k$.  This is precisely the inverse of the bumping procedure described above. 

We claim the resulting word $m=m_1 m_2 \cdots m_n$ is then the reading word of the unique $U\in \SIF(a)$ (whose leftmost column is identical to the leftmost column of $P$), that is mapped to $(P,Q)$ under insertion.  It is clear from the above paragraph that the word $m$ is in fact the unique word mapping to $(P,Q)$ under insertion.  To see that $m$ is the reading word of an element $U$ in $\SIF(a)$, begin by placing $m_1$ in the row containing the entry $m_1$ in $P$.  (Note that $m_1$ must be equal to the highest entry in the leftmost column of $P$ since the entry $1$ in $Q$ must be in the highest position of the leftmost column of $Q$.) Place the remaining letters of $m$ into this row in order until an entry in the leftmost column of $P$ appears in $m$.  Begin a new row of $U$ by placing this entry into the row corresponding to its position in $P$ and continue.   We must prove that the resulting diagram is in fact a $\SIF$.

The entries in the leftmost column are the same as those in $P$, and therefore (SSIF2) from Definition~\ref{def:SSIF} is satisfied by construction.  Since we don't need condition (SSIF3) for a standard immaculate filling, it is enough to show (SSIF1), that row entries increase from left to right.

Condition (DIRF2) from Definition~\ref{def:DIRF} implies that if $a_1$ and $a_2$ are consecutive elements of $Q$ (i.e.  $a_2=a_1+1$) such that $a_2$ is not in the leftmost column, then $a_1$ is strictly to the left of $a_2$.  In the language of~\cite{AHM18}, this means that $a_1$ and $a_2$ are consecutive elements in the same row strip.  One of the elements of the proof of Procedure 3.20 in~\cite{AHM18} is to show that if $a_1$ and $a_2$ are consecutive elements in the same row strip and the rapture of $a_1$ produces an output of $e_1$ and the rapture of $a_2$ produces the output $e_2$, then $e_1 < e_2$.  Therefore the entries raptured from $P$ - starting with the entry corresponding to the largest entry in $Q$ and moving through the entries of $Q$ in decreasing order - will produce a decreasing sequence of outputs until the leftmost column is reached.  Therefore the entries placed into a row of $U$ will in fact increase from left to right, as desired.
\qed

\begin{figure}[ht]
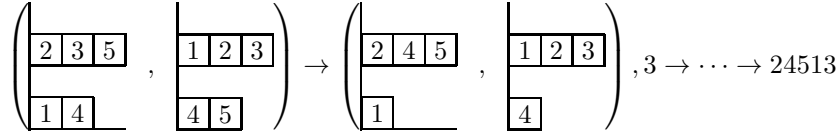

$\left( \vline\tableau{ \\  2 & 3 & 5 \\ \\  1 & 4  \\\hline } \; \; \; {,} \; \; \; \vline\tableau{ \\ 1 & 2 & 3 \\ \\ 4 & 5} \right) \rightarrow \left( \vline\tableau{ \\  2 & 4 & 5 \\ \\  1   \\\hline } \; \; \; {,} \; \; \; \vline\tableau{ \\ 1 & 2 & 3 \\ \\ 4 } \right), 3 \rightarrow \cdots \rightarrow 24513$
\caption{A pair $(P,Q) \in Y(2,0,3,0)$ and the reading word of their associated $\SIF$.}{\label{fig:rapture}}
\end{figure}

Reversing the variables yields a positive formula for the quasi-key expansion of a reverse dual immaculate slide polynomial:

\begin{cor}\label{rdistoqk}
Let $a$ be a weak composition of length $\ell$. Then
\begin{equation}\label{eq:rdistoqk}
\rdis_a = \sum_b c_{\rev(a),\rev(b)} \qkey_b,
\end{equation}
where $c_{a,b}$ is the number of DIRFs of shape $b$ with row strip shape $\rev(a)$.
\end{cor}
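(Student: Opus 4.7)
The plan is to derive this corollary directly from Theorem~\ref{thm:distoyqk} by reversing the variable set, using the two defining relationships between Young and reverse versions of our bases: equation (\ref{eq:dis}), which gives $\dis_a(x_1,\ldots,x_\ell) = \rdis_{\rev(a)}(x_\ell,\ldots,x_1)$, and equation (\ref{eq:yqktoqk}), which gives $\yqk_a(x_1,\ldots,x_\ell) = \qkey_{\rev(a)}(x_\ell,\ldots,x_1)$. Since both relationships have the same form (reverse the weak composition and reverse the variables), the expansion established in Theorem~\ref{thm:distoyqk} should transport to a matching expansion on the reverse side with the indexing compositions reversed.

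Concretely, I would first fix a weak composition $a$ of length $\ell$ and start from the identity $\dis_a = \sum_b c_{a,b} \yqk_b$ of Theorem~\ref{thm:distoyqk}, where the sum is over weak compositions $b$ of length $\ell$. I would then substitute (\ref{eq:dis}) on the left and (\ref{eq:yqktoqk}) on the right to obtain
\[
\rdis_{\rev(a)}(x_\ell,\ldots,x_1) \;=\; \sum_b c_{a,b}\, \qkey_{\rev(b)}(x_\ell,\ldots,x_1).
\]
Next, I would reindex by setting $a' = \rev(a)$ and $b' = \rev(b)$, so that $a = \rev(a')$ and $b = \rev(b')$, which transforms the identity into
\[
\rdis_{a'}(x_\ell,\ldots,x_1) \;=\; \sum_{b'} c_{\rev(a'),\rev(b')}\, \qkey_{b'}(x_\ell,\ldots,x_1).
\]
Since this identity of polynomials in $x_\ell,\ldots,x_1$ is equivalent (by relabeling the variables $x_i \leftrightarrow x_{\ell+1-i}$) to the same identity in $x_1,\ldots,x_\ell$, renaming $a' \mapsto a$ and $b' \mapsto b$ yields the claimed formula.

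The main point to verify carefully is that the indexing in Theorem~\ref{thm:distoyqk} remains compatible after reversal: both $a$ and the weak compositions $b$ appearing with nonzero coefficient should have the same length $\ell$, so that reversing is well-defined and reversible. This is ensured since both $\dis_a$ and $\yqk_b$ in the formula are polynomials in $x_1,\ldots,x_\ell$, and the combinatorial indexing on the right-hand side (DIRFs of shape $b$ with row strip shape $\rev(a)$) is unaffected by the relabeling. I don't anticipate a genuine obstacle here; this is a routine change-of-paradigm argument, and the only subtlety is keeping track of zero parts in the weak compositions (recall from the remark following (\ref{eq:dis}) that appending zeros does not change reverse polynomials). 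Once the formula is established, the remarks immediately following the statement about stable limits to quasisymmetric Schur functions and the positivity in Demazure atoms will then follow by combining with Theorem~\ref{thm:stablelimit}, Theorem~\ref{thm:reversestablelimit}, and the Demazure atom positivity of quasi-key polynomials from \cite{Sea}.
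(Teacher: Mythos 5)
Your proposal is correct and matches the paper's own proof: both derive the formula by applying Theorem~\ref{thm:distoyqk} together with the defining reversals (\ref{eq:dis}) and (\ref{eq:yqktoqk}), differing only in whether the theorem is invoked for $\rev(a)$ at the outset or for $a$ followed by reindexing. The length-compatibility point you flag is indeed the only bookkeeping issue, and it is handled exactly as you describe.
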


\begin{proof}
By Theorem~\ref{thm:distoyqk}, we have
\[\dis_{\rev(a)}(x_\ell, \ldots , x_1) =  \sum_{\rev(b)} c_{\rev(a),\rev(b)} \yqk_{\rev(b)}(x_\ell, \ldots , x_1).\]
By the definitions of the Young versions of the dual immaculate slide polynomials and quasi-key polynomials, the left hand side is equal to $\rdis_a(x_1, \ldots , x_\ell)$.  The right hand side is equal to $\sum_{\rev(b)} c_{\rev(a),\rev(b)} \qkey_b(x_1, \ldots , x_\ell)$, and summing over all $\rev(b)$ is the same as summing over all $b$.
\end{proof}

We may take the stable limit of each side of the above formula to obtain a formula for the quasisymmetric Schur expansion of a reverse dual immaculate function:

\begin{cor}\label{cor:rditoqs}
Let $\alpha$ be a composition. Then 
\[\rdi_{\alpha} = \sum_\beta c_{\rev(\alpha),\rev(\beta)} \qs_{\beta},\] 
where $c_{\alpha,\beta}$ is the number of DIRTs of shape $\beta$ with row strip shape $\rev(\alpha)$.
\end{cor}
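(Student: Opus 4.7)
The approach is to take stable limits of Corollary~\ref{rdistoqk} applied to the weak composition $a = 0^m \times \alpha$. Writing $\ell = \ell(\alpha)$, the corollary gives
\[
\rdis_{0^m \times \alpha} \;=\; \sum_{b} c_{\rev(0^m\times\alpha),\, \rev(b)}\, \qkey_b,
\]
where $b$ ranges over weak compositions of length $m+\ell$. First I would pin down which $b$ actually contribute, then pass to the limit $m\to\infty$ using Theorem~\ref{thm:stablelimit} for the left-hand side and Theorem~\ref{thm:reversestablelimit} for each surviving term on the right.

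The key step is a coefficient reduction. The coefficient $c_{\rev(0^m\times\alpha),\, \rev(b)}$ counts DIRFs of shape $\rev(b)$ with row strip shape $0^m \times \alpha$. Since the row strip shape is read top-to-bottom and begins with $m$ zeros, the top $m$ rows of $\rev(b)$ must be empty, forcing $b = 0^m \times \beta$ for some weak composition $\beta$ of length $\ell$; and since $\alpha$ has all positive parts and arises from the remaining rows of $\rev(\beta)$, we need $\beta$ to in fact be a composition. For such $b$ the $m$ empty top rows are inert in every DIRF condition---they contribute nothing to the leftmost column, and no recording triple can involve them since the boxes $x$ and $y$ of a triple must lie in the diagram---so DIRFs of shape $\rev(\beta)\times 0^m$ with row strip shape $0^m\times\alpha$ biject with DIRTs of shape $\rev(\beta)$ with row strip shape $\alpha$ simply by ignoring the empty top rows. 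This identifies $c_{\rev(0^m\times\alpha),\,\rev(0^m\times\beta)} = c_{\rev(\alpha),\rev(\beta)}$ and yields
\[
\rdis_{0^m \times \alpha} \;=\; \sum_{\beta} c_{\rev(\alpha),\, \rev(\beta)}\, \qkey_{0^m \times \beta},
\]
a finite sum over compositions $\beta$ of $|\alpha|$ of length $\ell$, with coefficients independent of $m$.

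Since the sum is finite, letting $m\to\infty$ termwise is valid: Theorem~\ref{thm:stablelimit} sends the left-hand side to $\rdi_\alpha$, and Theorem~\ref{thm:reversestablelimit} sends each $\qkey_{0^m\times\beta}$ to $\qs_\beta$ (as $\flatten(\beta)=\beta$ when $\beta$ is a composition). This produces the claimed formula $\rdi_\alpha = \sum_\beta c_{\rev(\alpha),\rev(\beta)}\, \qs_\beta$; although phrased as a sum over all $\beta$, the coefficient $c_{\rev(\alpha),\rev(\beta)}$ vanishes unless $\ell(\beta)=\ell(\alpha)$, since the row strip shape of a DIRT of shape $\rev(\beta)$ has length $\ell(\beta)$. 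I expect the coefficient reduction of the middle paragraph to be the only step requiring non-routine work; the rest is an immediate application of previously established stability results.
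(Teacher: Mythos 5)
Your proposal is correct and follows essentially the same route as the paper: take stable limits of Corollary~\ref{rdistoqk} (using Theorems~\ref{thm:stablelimit} and~\ref{thm:reversestablelimit}) and identify the coefficients by removing the inert empty rows from the DIRFs, which is exactly the paper's observation that $c_{a,b} = c_{\flatten(a),\flatten(b)}$. Your version is slightly more explicit about which shapes $b$ can contribute and about the validity of the termwise limit, but the substance is identical.
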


\begin{proof}
Taking stable limits of (\ref{eq:rdistoqk}), we obtain 
\[\rdi_{\flatten(a)} = \sum_b c_{\rev(a),\rev(b)} \qs_{\flatten(b)},\] 
where $c_{a,b}$ is the number of DIRFs of shape $b$ with row strip shape $\rev(a)$. 

It remains to observe that for weak compositions $a,b$, we have $c_{a,b} = c_{\flatten(a), \flatten(b)}$. To see this, first note that introducing/removing empty rows into/from a DIRF yields another DIRF: the conditions (DIRF1--4) are unaffected by such introductions/removals. Now suppose $c_{a,b}$ is nonzero. This implies that for each $i$, $a_i$ is nonzero if and only if $b_i$ is nonzero. It is clear that given a DIRF of shape $b$ with row strip shape $\rev(a)$, removing all empty rows yields a DIRT of shape $\flatten(b)$ with row strip shape $\rev(\flatten(a))$. Conversely, given a DIRT of shape $\flatten(b)$ with row strip shape $\rev(\flatten(a))$, one can introduce empty rows in a unique way to obtain a DIRF of shape $b$. Since the diagrams of $a$ and $b$ have the same occupied rows, and row strips always start in the first column, this DIRF will have row strip shape $\rev(a)$. 
\end{proof}

From this, we can recover the formula of \cite{AHM18} given in Theorem~\ref{thm:ditoyqs}.

\begin{cor}\cite[Theorem 1.1]{AHM18}
\[\di_{\alpha} = \sum_\beta c_{\alpha, \beta} \yqs_{\beta},\] 
where $c_{\alpha,\beta}$ is the number of DIRTs of shape $\beta$ with row strip shape $\rev(\alpha)$. 
\end{cor}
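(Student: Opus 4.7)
The plan is to obtain this corollary directly from the just-established Corollary~\ref{cor:rditoqs} by applying the Young/reverse duality: namely, reverse the composition index and reverse the variable set on both sides of the identity in Corollary~\ref{cor:rditoqs}, then use the already-recorded dictionaries between $\di$ and $\rdi$ and between $\yqs$ and $\qs$ to translate each ingredient. Since Corollary~\ref{cor:rditoqs} was itself deduced by taking the stable limit of Corollary~\ref{rdistoqk}, this final corollary is a quasisymmetric-function identity that falls out essentially by a change of variables.

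First, I would replace $\alpha$ with $\rev(\alpha)$ in Corollary~\ref{cor:rditoqs} to write
\[
\rdi_{\rev(\alpha)} \;=\; \sum_\beta c_{\alpha,\rev(\beta)}\,\qs_\beta,
\]
and then specialize both sides to $\ell$ variables and substitute $(x_1,\ldots,x_\ell)\mapsto(x_\ell,\ldots,x_1)$. On the left, Proposition~\ref{prop:ditordi} immediately yields $\rdi_{\rev(\alpha)}(x_\ell,\ldots,x_1)=\di_\alpha(x_1,\ldots,x_\ell)$. On the right, the relationship between the quasisymmetric Schur and Young quasisymmetric Schur functions recalled in Section~\ref{sec:background}, namely that $\qs_{\rev(\gamma)}(x_1,\ldots,x_\ell)=\yqs_\gamma(x_\ell,\ldots,x_1)$, gives $\qs_\beta(x_\ell,\ldots,x_1)=\yqs_{\rev(\beta)}(x_1,\ldots,x_\ell)$. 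Substituting these conversions and reindexing the sum by $\gamma=\rev(\beta)$ (so that $c_{\alpha,\rev(\beta)}$ becomes $c_{\alpha,\gamma}$) produces
\[
\di_\alpha(x_1,\ldots,x_\ell) \;=\; \sum_\gamma c_{\alpha,\gamma}\,\yqs_\gamma(x_1,\ldots,x_\ell).
\]
Finally, letting $\ell\to\infty$ upgrades the identity to one in $\QSym$, completing the proof.

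There is really no hard step here: every translation invoked is already established earlier in the paper (Proposition~\ref{prop:ditordi} for the $\di$/$\rdi$ relation, and the variable-reversal description of $\qs$ in terms of $\yqs$), and the combinatorial content (counting DIRTs of shape $\beta$ with row strip shape $\rev(\alpha)$) is preserved untouched by the reindexing $\beta\mapsto\rev(\beta)$. The only point that deserves a brief sanity check is that the sum remains well-defined under the reindexing and that the substitution is compatible with taking the stable limit; both are immediate since $c_{\alpha,\gamma}$ vanishes outside finitely many $\gamma$ in each fixed degree, and stabilization of $\qs$-expansions to $\yqs$-expansions is compatible with the variable reversal. Thus the corollary is essentially a bookkeeping consequence of Corollary~\ref{cor:rditoqs} under the passage between reverse and Young paradigms.
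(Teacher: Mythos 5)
Your proposal is correct and follows essentially the same route as the paper's own proof: apply Proposition~\ref{prop:ditordi} to convert $\di_\alpha$ to $\rdi_{\rev(\alpha)}$ under variable reversal, invoke Corollary~\ref{cor:rditoqs}, translate $\qs$ to $\yqs$ via the variable-reversal dictionary, and let $\ell\to\infty$. The only cosmetic difference is the point at which you perform the reindexing $\beta\mapsto\rev(\beta)$, which does not affect the argument.
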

\begin{proof}
For any positive integer $\ell$, we have 
\begin{align*}
\di_{\alpha}(x_1, \ldots , x_\ell) & = \rdi_{\rev(\alpha)}(x_\ell , \ldots , x_1) \\
							 & = \sum_\beta c_{\alpha, \beta} \qs_{\rev(\beta)}(x_\ell, \ldots , x_1) \\
							 & = \sum_\beta c_{\alpha, \beta} \yqs_{\beta}(x_1, \ldots , x_\ell)
							 \end{align*}
where the first equality is by Proposition~\ref{prop:ditordi} and the second by Corollary~\ref{cor:rditoqs}. 
The result follows by letting $\ell \rightarrow \infty$.
\end{proof}

As a final corollary, we establish a connection between the dual immaculate basis and the \emph{Demazure atom} basis introduced by Lascoux-Sch\"utzenberger~\cite{LasSch90} and further developed in~\cite{Mas09}.  These polynomials are obtained as specializations of nonsymmetric Macdonald polynomials, and can be interpreted representation-theoretically as characters of quotients of Demazure modules.   To see the connection to the dual immaculate basis, note that the reverse dual immaculate slide polynomials and the reverse dual immaculate quasisymmetric functions expand positively in the basis of Demazure atoms. 

\begin{cor}{\label{cor:rdistodem}}
The reverse dual immaculate slide polynomials and the reverse dual immaculate quasisymmetric functions expand positively in Demazure atoms.
\end{cor}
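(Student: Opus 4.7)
The plan is to prove both statements by composing two positive expansions, so that the corollary follows essentially immediately from results already in hand.

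For the polynomial statement, I would combine Corollary~\ref{rdistoqk} with the result of \cite{Sea} cited above: Corollary~\ref{rdistoqk} establishes that
\[\rdis_a = \sum_b c_{\rev(a),\rev(b)}\,\qkey_b\]
with nonnegative integer coefficients $c_{\rev(a),\rev(b)}$ counting DIRFs, and the cited result of \cite{Sea} provides that every quasi-key polynomial $\qkey_b$ expands as a nonnegative integer combination of Demazure atoms. Substituting the second expansion into the first yields the desired positive expansion of $\rdis_a$ in the Demazure atom basis.

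For the quasisymmetric statement, I would use Corollary~\ref{cor:rditoqs}, which gives
\[\rdi_\alpha = \sum_\beta c_{\rev(\alpha),\rev(\beta)}\,\qs_\beta,\]
a positive expansion in the quasisymmetric Schur basis. Combining this with the well-known decomposition $\qs_\beta(x_1,\ldots,x_n)=\sum_{a\,:\,\flatten(a)=\beta}\mathcal{A}_a$ of quasisymmetric Schur polynomials into Demazure atoms established in \cite{HLMvW11a}, one obtains a positive expansion of each finite-variable specialization of $\rdi_\alpha$ into Demazure atoms. Alternatively, the same conclusion follows by applying the polynomial statement to $\rdis_{0^m\times a}$ and invoking Theorem~\ref{thm:stablelimit} together with Theorem~\ref{thm:reversestablelimit} to pass to the limit as $m\to\infty$, since both stable limits preserve nonnegativity of coefficients.

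There is no substantive obstacle; the entire content of the corollary is the two-step composition of the expansion proved earlier in this section with the quasi-key-to-atom expansion from \cite{Sea} (respectively the quasisymmetric-Schur-to-atom expansion from \cite{HLMvW11a}). The only point worth making explicit is that the quasisymmetric version is to be understood in each fixed finite variable set, because Demazure atoms are inherently polynomial rather than quasisymmetric objects.
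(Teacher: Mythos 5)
Your proposal is correct and follows essentially the same route as the paper: the polynomial case composes Corollary~\ref{rdistoqk} with the quasi-key-to-atom positivity from \cite{Sea}, and the quasisymmetric case composes Corollary~\ref{cor:rditoqs} with the quasisymmetric-Schur-to-atom positivity from \cite{HLMvW11a}. Your added remark about working in a fixed finite variable set (and the alternative stable-limit route) is a reasonable clarification but not a different argument.
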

\begin{proof}
For the reverse dual immaculate slide polynomials, this follows from Corollary~\ref{rdistoqk} and the fact that quasi-key polynomials expand positively in Demazure atoms \cite{Sea}. For the reverse dual immaculate quasisymmetric functions, this follows from Corollary~\ref{cor:rditoqs} and Theorem~\ref{thm:ditoyqs}, since quasisymmetric Schur functions expand positively in Demazure atoms \cite{HLMvW11a}.
\end{proof}

Recall that the dual immaculate quasisymmetric functions can can be obtained from the reverse dual immaculate quasisymmetric functions via a reversal of both the variable set and the composition parts.  Therefore Corollary~\ref{cor:rdistodem} implies that the dual immaculate quasisymmetric functions expand positively in Demazure atoms up to a similar reversal procedure.

\appendix
\section{Families  of functions and polynomials and their corresponding tableaux appearing in this paper}{\label{appendix}}

\begin{figure}[h]
\begin{center}
\begin{tabular}{|p{1.6in}|p{2in}|p{2in}|}
\hline
{\bf Object Name and \newline Function Generated} & {\bf Basic Properties} & {\bf  Standard version, reading order, and descent set} \\\hline\hline
Semistandard reverse immaculate tableau ($\SSRIT$, Def.~\ref{def:SSRIT}) \newline \newline \framebox{$\rdi_{\alpha}$} \newline
Reverse dual immaculate quasisymmetric function & 
\begin{enumerate}[align=left,style=nextline,leftmargin=*]
\item Row entries weakly decrease left to right.
\item Leftmost column entries strictly increase bottom to top.
\end{enumerate} & 
\begin{itemize}[align=left,style=nextline,leftmargin=*]
\item Standard ($\SRIT$):  each of $\{1,2, \hdots , n\}$ appears once
\item Reading order: rows from right to left, top to bottom 
\item $\des_{\SRIT} = \{ i$ such that $i+1$ is in a strictly higher row $\}$
\end{itemize}  \\
\hline
Semistandard immaculate tableau ($\SSIT$, Def.~\ref{def:SSIT}) \newline \newline \framebox{$\di_{\alpha}$} \newline
Dual immaculate quasisymmetric function & 
\begin{enumerate}[align=left,style=nextline,leftmargin=*]
\item Row entries weakly increase left to right.
\item Leftmost column entries strictly increase bottom to top.
\end{enumerate} & 
\begin{itemize}[align=left,style=nextline,leftmargin=*]
\item Standard ($\SIT$):  each of $\{1,2, \hdots , n\}$ appears once
\item Reading order: rows from left to right, top to bottom
\item $\des_{\SIT} = \{ i$ such that $i+1$ is in a strictly higher row $\}$
\end{itemize}
\\
\hline
 \framebox{$\qs_{\alpha}$} \newline
Quasisymmetric Schur function & 
Not used in this paper
& 
Not used in this paper
\\
\hline
Young composition tableau ($\YCT$, Def.~\ref{def:YCT}) \newline \newline \framebox{$\yqs_{\alpha}$} \newline
Young quasisymmetric Schur function & 
\begin{enumerate}[align=left,style=nextline,leftmargin=*]
\item Row entries weakly increase left to right.
\item Leftmost column entries strictly increase bottom to top.
\item All Young triples are inversion triples (Figure~\ref{fig:triples}).
\end{enumerate} & 
\begin{itemize}[align=left,style=nextline,leftmargin=*]
\item Standard ($\SYCT$):  each of $\{1,2, \hdots , n\}$ appears once
\item Reading order: columns from top to bottom, right to left
\item $\des_{\yqs} = \{ i$ such that $i+1$ is weakly left of $i$ $\}$
\end{itemize}

 \\\hline

\end{tabular}
\end{center}
\caption{Families of quasisymmetric functions}{\label{table:qsym}}
\end{figure}

\begin{figure}
\begin{center}
\begin{tabular}{|p{1.6in}|p{2in}|p{2in}|}
\hline
{\bf Object Name and \newline Polynomial Generated} & {\bf Basic Properties} & {\bf  Standard version, reading order, and descent set}  \\\hline\hline
Semistandard reverse immaculate filling ($\SSRIF$, Def.~\ref{def:SSRIF}) \newline \newline \framebox{$\rdis_{a}$} \newline
Reverse dual immaculate slide polynomial & 
\begin{enumerate}[align=left,style=nextline,leftmargin=*]
\item Row entries weakly decrease left to right.
\item Leftmost column entries strictly increase bottom to top.
\item Entries in the $i^{th}$ row from the bottom do not exceed $i$.
\end{enumerate} & 
\begin{itemize}[align=left,style=nextline,leftmargin=*]
\item Standard ($\SRIF$):  each of $\{1,2, \hdots , n\}$ appears once, only need conditions (1) and (2)
\item Reading order: rows from right to left, top to bottom
\item $\des_{\SRIF} = \{  i \textrm{ such that } i+1 \textrm{ is in a strictly higher row} \}$
\end{itemize} \\
\hline
Semistandard immaculate filling ($\SSIF$, Def.~\ref{def:SSIF}) \newline \newline \framebox{$\dis_{a}$} \newline
Dual immaculate slide polynomial & 
\begin{enumerate}[align=left,style=nextline,leftmargin=*]
\item Row entries weakly increase left to right.
\item Leftmost column entries strictly increase bottom to top.
\item Entries in the $i^{th}$ row from the bottom do not exceed $i$.
\end{enumerate} & 
\begin{itemize}[align=left,style=nextline,leftmargin=*]
\item Standard ($\SIF$):  each of $\{1,2, \hdots , n\}$ appears once,~only need conditions (1) and (2)
\item Reading order: rows from left to right, top to bottom
\item $\des_{\SIF} = \{ i \textrm{ such that } i+1 \textrm{ is in a strictly higher row} \}$
\end{itemize} \\
\hline
Reverse semistandard skyline filling ($\RSSF$, Def~\ref{def:RSSF}) \newline \newline \framebox{$\qkey_{a}$} \newline
Quasi-key polynomial & 
\begin{enumerate}[align=left,style=nextline,leftmargin=*]
\item Row entries weakly decrease left to right.
\item Entries strictly increase up the first column, and entries in any column are distinct.
\item No entry in row $i$ is greater than $i$.
\item All triples are inversion triples (Figure~\ref{fig:reversetriples}).
\end{enumerate} & 
\begin{itemize}[align=left,style=nextline,leftmargin=*]
\item Standard ($\RSF$):  each of $\{1,2, \hdots , n\}$ appears once, only need conditions (1), (2), (4)
\item Reading order: columns from bottom to top, right to left
\item $\des_{\RSF} = \{ i \textrm{ such that } i+1 \textrm{ is weakly right of } i \}$
\end{itemize}
\\
\hline
Young semi-skyline filling ($\YSSF$, Def.~\ref{def:YSSF}) \newline \newline \framebox{$\yqk_{a}$} \newline
Young quasi-key polynomial & 
\begin{enumerate}[align=left,style=nextline,leftmargin=*]
\item Row entries weakly increase left to right.
\item Entries strictly increase up the first column, and entries in any column are distinct.
\item No entry in row $i$ is smaller than $i$.
\item All Young triples are inversion triples (Figure~\ref{fig:triples}).
\end{enumerate} & 
\begin{itemize}[align=left,style=nextline,leftmargin=*]
\item Standard ($\YSF$):  each of $\{1,2, \hdots , n\}$ appears once, only need conditions (1), (2), (4)
\item Reading order: columns from top to bottom, right to left
\item $\des_{\YSF} = \{ i \textrm{ such that } i+1 \textrm{ is weakly left of } i  \}$
\end{itemize}
\\
\hline
Fundamental semistandard skyline filling ($\FSSF$, Def.~\ref{def:FSSF}) \newline \newline \framebox{$\fs_{a}$} \newline
Fundamental slide polynomial & 
\begin{enumerate}[align=left,style=nextline,leftmargin=*]
\item Row entries weakly decrease left to right.
\item Each entry is greater than all entries in lower rows.
\item Entries in the $i^{th}$ row from the bottom do not exceed $i$.
\end{enumerate} & 
Not used in this paper
\\
\hline

\framebox{$\yfs_{a}$} \newline
Young fundamental slide polynomial &
Not used in this paper &
Not used in this paper

\\  \hline

\end{tabular}
\end{center}
\caption{Families of polynomials}{\label{fig:polytable}}
\end{figure}

\bibliographystyle{alpha}
\bibliography{DIS2019}
\label{sec:biblio}

\end{document}